\newtheorem{theorem}{Theorem}
\newtheorem{lemma}[theorem]{Lemma}
\def\ie{\textit{i.e.}}
\newcommand{\norm}[1]{\left\Vert#1\right\Vert}
\newcommand{\abs}[1]{\left\vert#1\right\vert}
\newcommand{\set}[1]{\left\{#1\right\}}
\newcommand{\parr}[1]{\left (#1\right )}
\newcommand{\brac}[1]{\left [#1\right ]}
\newcommand{\ip}[1]{\left \langle #1 \right \rangle }
\newcommand{\Real}{\mathbb R}
\newcommand{\Natural}{\mathbb N}
\newcommand{\Lap}{L}
\newcommand{\NCone}{B}
\def \conv{{\mathrm{Conv}}}
\def \x{{\mathrm{x}}}
\def \y{{\mathrm{y}}}
\def \NN{{\mathcal{N}}}
\def \TT{{\mathcal{T}}}
\def \PP{{\mathcal{P}}}
\def \CC{{\mathcal{C}}}
\newcommand{\DISK}{\mathbb B}
\newcommand{\IMG}{\Omega}
\newcommand{\MAP}{\phi}
\newcommand{\BMAP}{\gamma}
\newcommand{\BND}{\Gamma}
\newcommand{\CUSPS}{S}
\title{Non-Convex Planar Harmonic Maps}
\author{Shahar Z. Kovalsky$^1$}
\address{$^1$Duke University}
\author{Noam Aigerman$^2$}
\address{$^2$Adobe Research}
\author{Ingrid Daubechies$^1$}
\author{Michael Kazhdan$^3$}
\address{$^3$Johns Hopkins University}
\author{Jianfeng Lu$^1$}
\author{Stefan Steinerberger$^4$}
\address{$^4$Yale University}
\begin{document}
\maketitle

\begin{abstract}
We formulate a novel characterization of a family of invertible maps between two-dimensional domains. Our work follows two classic results: The Rad\'o-Kneser-Choquet (RKC) theorem, which establishes the invertibility of harmonic maps into a convex planer domain; and Tutte's embedding theorem for planar graphs - RKC's discrete counterpart - which proves the invertibility of piecewise linear maps of triangulated domains satisfying a  discrete-harmonic principle, into a convex planar polygon. In both theorems, the convexity of the target domain is essential for ensuring invertibility. We extend these characterizations, in both the continuous and discrete cases, by replacing convexity with a less restrictive condition. In the continuous case, Alessandrini and Nesi provide a characterization of invertible harmonic maps into non-convex domains with a smooth boundary by adding additional conditions on orientation preservation along the boundary. We extend their results by defining a condition on the normal derivatives along the boundary, which we call the cone condition; this condition is tractable and geometrically intuitive, encoding a weak notion of local invertibility. The cone condition enables us to extend Alessandrini and Nesi to the case of harmonic maps into non-convex domains with a piecewise-smooth boundary. In the discrete case, we use an analog of the cone condition to characterize invertible discrete-harmonic piecewise-linear maps of triangulations. This gives an analog of our continuous results and characterizes invertible discrete-harmonic maps in terms of the orientation of triangles incident on the boundary.
\end{abstract}


\section{Introduction}
This paper studies invertible maps, \ie, maps for which an inverse mapping exists. These maps are also sometimes called 1-to-1 maps, since they define a 1-to-1 correspondence between domains. This correspondence is crucial in many fields, for instance in enabling the study of variations of properties across a collection of objects: in the context of computational anatomy, considering different specimens of an organ  and studying the variation of corresponding regions aids the classification of the specimen as healthy or unhealthy. Similarly, in the context of evolutionary biology, variation across a collection is instrumental in the construction of phylogenetic trees describing how species evolve.

\begin{figure}[t!]
  \centering
  \includegraphics[width=1\columnwidth]{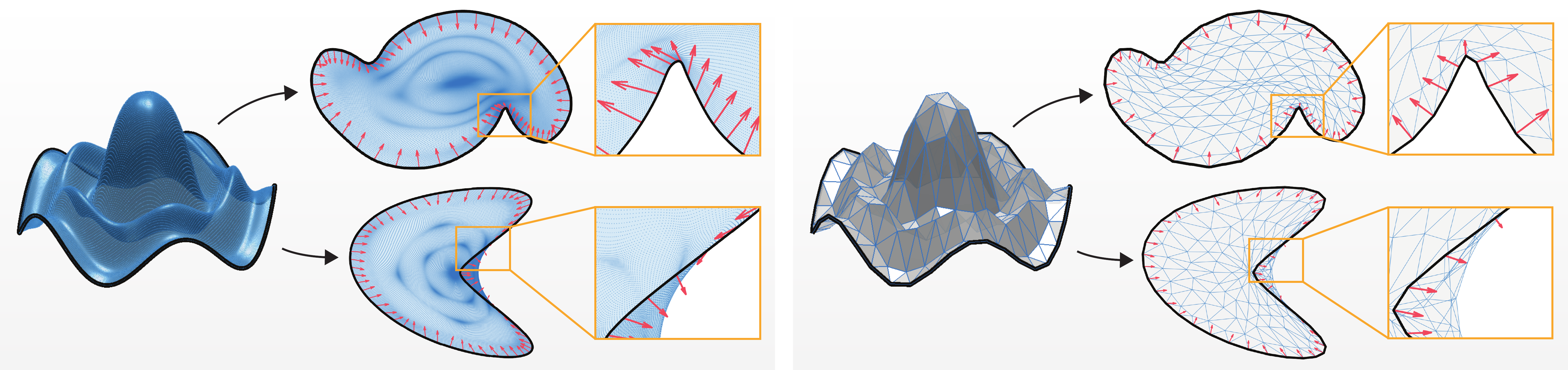}
  \caption{Planar harmonic maps into non-convex domains. Left panel shows a surface harmonically mapped into two different non-convex domains. Red arrows depict the normal gradients along the boundary; these can be thought of as the ``forces'' exerted by a system of springs or a membrane along the fixed boundary. In the top row, the forces along the boundary all point inwards, thus satisfying the cone condition we propose in this work which guarantees the resulting harmonic embedding is invertible. The second row shows a non-invertible harmonic mapping into a non-convex domain; in this case, ``forces'' point outwards as the spring/membrane is pulled out of the target domain, causing self-overlaps. The right panel shows that the exact same result holds for a \emph{discrete} harmonic embedding of a triangulated surface.}
  \label{fig:teaser}
\end{figure} 
 
Mathematicians have long tried to find families of maps that are guaranteed to be invertible, \ie, prove that if some criteria hold for a map, it is invertible. This is also the focus of our work. Namely, this paper concerns harmonic maps between two-dimensional domains (little is known regarding invertibility of harmonic maps into high-dimensional domains, and existing counterexamples indicate that the higher-dimensional case requires much more intricate assumptions). We consider two analogous families of maps for two separate cases - the continuous case, in which one continuous domain is mapped into another continuous domain; and the discrete case, in which a triangulated surface is mapped into a polygonal domain. 

One of the largest known families of invertible harmonic maps is characterized by the celebrated Rad\'o-Kneser-Choquet (RKC) theorem \cite{duren2004harmonic, rado1926aufgave, kneser1926losung, choquet1945type}. RKC considers \emph{harmonic maps} of planar disks into other planar domains (\ie, maps whose coordinate functions are solution of Laplace's equation). RKC states that given a homeomorphism (\ie, a continuous map with a continuous inverse) of the unit circle onto a simple closed curve bounding a \emph{convex} region, extending it harmonically to the interior produces a homeomorphism of the unit disk onto the interior of the curve. Though the constraint that the source domain is a disk may appear overly restrictive, taken in conjunction with the Riemann Mapping Theorem, RKC implies that a harmonic map of any compact Riemann surface with disk topology into a convex domain, such that the map between the one-dimensional boundaries is a homeomorphism, is itself a homeomorphism of the two-dimensional domains. 

In the discrete case, a closely-related characterization of invertible maps is derived from Tutte's work on planar embeddings of graphs \cite{tutte1963draw}. Tutte considered planar 3-connected graphs and their barycentric embeddings, in which the position of each vertex is set to the average of its neighbors' positions. When the outer face is a \emph{convex} polygon, Tutte proved that the edges of such a barycentric embedding, realized as straight lines, do not cross. 

Maps of triangulations have been further studied in \cite{floater1997parametrization, floater2003one,gortler2006discrete}, which consider discrete-harmonic embeddings of triangulated surfaces with disk topology. As in Tutte's embedding, maps are specified by prescribing the position of vertices within the target domain, and are extended to the interior of triangles through linear interpolation. These works show that if the image of the boundary of a triangulation is a convex polygon, then the interpolated map is a homeomorphism. This property of discrete-harmonic maps, along with their computational tractability (via solving a single sparse system of linear equations), accounts for their broad applicability in computer graphics, including meshing, shape analysis, parameterization, and texture mapping \cite{floater2005surface,liu2008local,weber2014locally,aigerman2014lifted,wong2015computing,bright2017harmonic,jiang2017simplicial,li2019optcuts,shen2019progressive}.

\subsection*{Approach}
In this paper, we extend the characterization of invertible (planar) harmonic maps. To this end, we consider harmonic and discrete-harmonic maps into \emph{non-convex} regions, and study conditions under which such maps are homeomorphisms. This enables the definition of correspondences while providing more degrees of freedom for optimizing the quality of the map (e.g. the extent to which the map distorts the notion of lengths, angles, areas).

One cannot simply omit the convexity requirement: in both the continuous (RKC) and discrete (Tutte) theorems, convexity plays a crucial role. For example, in the continuous case, it is known (Choquet \cite{choquet1945type}) that for any non-convex region bounded by a simple curve, there exists some homeomorphism from the unit circle to the boundary curve for which the harmonic extension to the unit disk fails to be a homeomorphism \cite{choquet1945type,alessandrini2009invertible}. Figure~\ref{fig:teaser} illustrates harmonic and discrete-harmonic maps into non-convex domains.

Hence, our work is driven by the following question: is there a property sufficient to guarantee injectivity, which is trivially satisfied in the case of convex boundaries, but can also be satisfied in non-convex cases? As we will show, the answer is affirmative and the conditions are tractable.

To give an informal intuition for our condition, imagine a system of springs with a fixed boundary; once the system reaches equilibrium, interior forces cancel out (this is the physical interpretation of the harmonicity condition), except along the boundary where the forces acting on the boundary are balanced by external forces holding the boundary in place (the Dirichlet boundary conditions). At any point along the boundary, if the interior forces point inwards, the membrane is pulled inwards and locally should be in the interior of the target domain. If, on the other hand, the internal forces point outward, the membrane is pulled out of the target domain, and folds over the boundary to spill out. See Figure~\ref{fig:teaser} for an illustration.

We formalize this intuition in terms of normal derivatives and prove that the condition requiring forces to point inwards (which is trivially satisfied for convex target domains) is sufficient to prove invertibility, independently of the target boundary's shape.

\begin{figure}[t!]
  \centering
  \includegraphics[width=0.8\columnwidth]{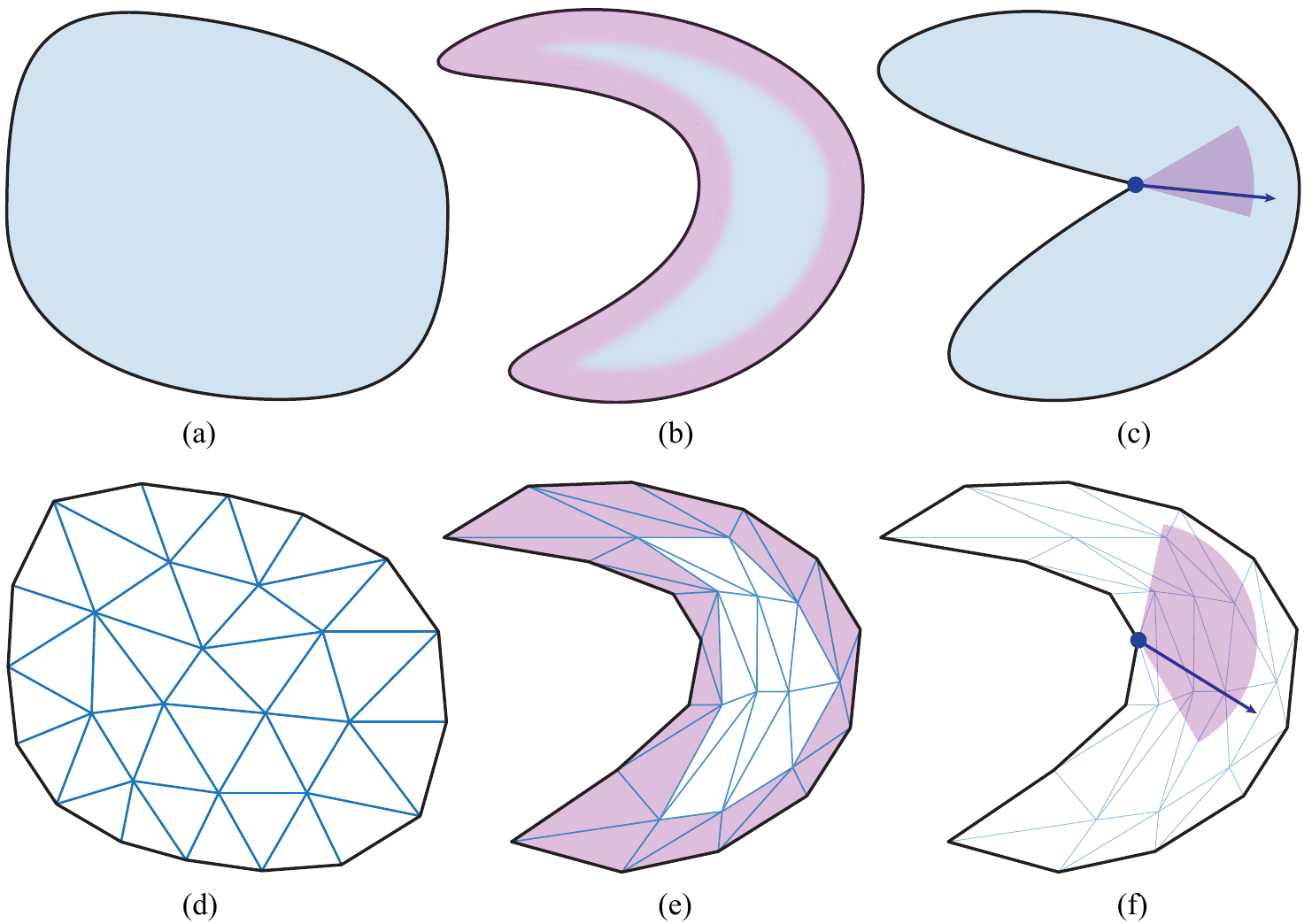}
  \caption{An overview of the results discussed in this paper, illustrating the types of target domains (blue) and boundary conditions (pink) involved in characterizing invertible harmonic maps in the continuous setting (top) and the discrete one (bottom). If the target domain's boundary is convex, the RKC theorem (a) and Tutte embedding theorem (d) provide a guarantee of invertibility. In case the boundary is non-convex, Alessandrini-Nesi characterize invertibility in terms of the map's orientation along the boundary (b). We introduce a simple \emph{cone condition} at non-smooth points of the boundary allowing us to extend their result to the case of a piecewise smooth boundary (c). A discrete version of the cone condition enables the characterization of invertible piecewise-linear maps in the discrete case (f); lastly, we use this result to prove a discrete analog of the Alessandrini-Nesi theorem, characterizing invertibility in terms of the orientation of triangles adjacent to the boundary (e).}
  \label{fig:intro_summary}
\end{figure}

In the continuous case, we build on results by Alessandrini and Nesi \cite{alessandrini2009invertible,alessandrini2017errata}. They show that a sufficient and necessary condition for a harmonic map to be a homeomorphism is that it is a local homeomorphism along the boundary. If the boundary map is differentiable, they prove that a harmonic map is a homeomorphism if and only if it is orientation-preserving along the boundary. We extend their result to the continuous case of a boundary curve admitting a finite number of isolated singularities. To that end, we replace their orientation condition and define a simple geometric condition on the ``forces'' at the non-smooth points of the boundary, which we call the \emph{cone condition}. We then prove that the cone condition characterizes homeomorphic harmonic maps. 

In the discrete case, discrete-harmonic maps into non-convex domains have been characterized in Gortler et al.~\cite{gortler2006discrete}. We provide an alternative characterization in terms of a discrete analog of the cone condition, which, as in the continuous case, is an intuitive geometric condition along the boundary. We prove that it suffices to consider the discrete cone condition at the ``non-convex boundary'' (\ie, reflex) vertices to characterize discrete-harmonic intersection-free embeddings of triangulations. These, in turn, induce piecewise-linear homeomorphisms. Finally, we derive a discrete analog of the result by Alessandrini and Nesi, characterizing discrete-harmonic maps in terms of the orientation of triangles adjacent to the boundary. 

The rest of the paper is organized as follows: Section~\ref{sect:cont} is concerned with harmonic maps in the continuous case and Section~\ref{sect:discrete} with discrete-harmonic maps in the case of triangulations. The analogy between the conditions and results for the continuous and discrete statements is illustrated in Figure~\ref{fig:intro_summary}.


\section{Continuous Harmonic Mappings}
\label{sect:cont}

\subsection{Preliminaries}
Let $\DISK=\set{(x,y) : x^2 + y^2 < 1}$ denote the open unit disk and let $\BMAP:\partial \DISK \to \BND$ be a homeomorphism from the unit circle onto a simple closed curve $\BND$ enclosing a bounded set $\IMG \subset \Real^2$. We consider the harmonic extension of $\BMAP$, that is the map $\MAP:\DISK \to \Real^2$ given by the solution of the 2-dimensional Dirichlet problem
\begin{equation} \label{eqn:cont_harmonic} 
    \begin{cases}
        \Delta \MAP = 0 \qquad &\mbox{in}~\DISK\\
        \MAP = \BMAP \qquad &\mbox{on}~\partial\DISK.
    \end{cases}
\end{equation}
Note that $\MAP = \parr{\MAP_1,\MAP_2}$ has two coordinates, as does $\BMAP = \parr{\BMAP_1,\BMAP_2}$; hence the notation of \eqref{eqn:cont_harmonic} should be understood as being applied to each coordinate separately.

When needed, we shall use the polar coordinates defined by $(\nu,\theta) \mapsto (1-\nu)e^{i\theta}$, with $-\pi \leq\theta\leq \pi$ and $0\leq\nu\leq1$. Note that, as opposed to the classic definition of polar coordinates, in our definition  $\nu = 0 $ corresponds to the boundary of the disc, and $\nu = 1$ corresponds to its center, $\parr{0,0}$. This slight variation will simplify the subsequent equations. Figure~\ref{fig:cont_setup} depicts the problem setup and notations.

\begin{figure}[h!]
  \centering
  \includegraphics[width=0.9\columnwidth]{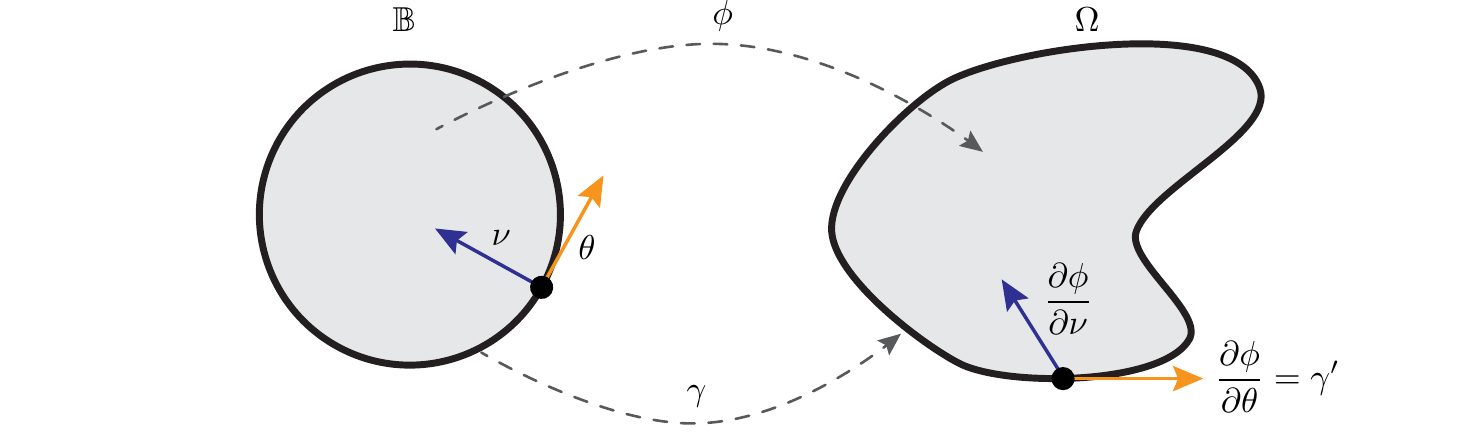}
  \caption{Harmonic extension: problem setup and notations.}
  \label{fig:cont_setup}
\end{figure}

One of the most well-known results on injectivity of harmonic mappings is due to Rad\'o, Kneser and Choquet (RKC), who prove that if $\IMG$ is convex then the harmonic extension of $\BMAP:\partial \DISK \to \partial \IMG$ is a homeomorphism \cite{duren2004harmonic, rado1926aufgave, kneser1926losung, choquet1945type}:

\begin{theorem}[Rado-Kneser-Choquet] \label{cont:RKC}
If $\IMG$ is convex, then $\MAP$ is a homeomorphism of $\overline{\DISK}$ onto $\overline{\IMG}$. 
\end{theorem}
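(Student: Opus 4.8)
The plan is to prove the Rad\'o--Kneser--Choquet theorem via the classical ``argument principle / no critical points'' route, adapted to the polar coordinates fixed above. The key structural fact is that a harmonic map $\MAP:\DISK\to\Real^2$ is a local homeomorphism precisely where its Jacobian $J_\MAP = \partial_x\MAP_1\,\partial_y\MAP_2 - \partial_y\MAP_1\,\partial_x\MAP_2$ does not vanish, and that by a lemma of Lewy the Jacobian of a (nonconstant) harmonic map cannot vanish in the interior unless it vanishes identically; more usefully, one shows $J_\MAP$ has constant sign. So the heart of the argument is: (i) show $J_\MAP \neq 0$ throughout $\DISK$, (ii) upgrade the resulting local injectivity to global injectivity using the behavior on $\partial\DISK$, and (iii) conclude surjectivity onto $\overline\IMG$.

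For step (i) I would use the Rad\'o--Kneser argument: suppose $J_\MAP(z_0)=0$ at some interior point $z_0$. Then there is a nonzero vector $a=(a_1,a_2)\in\Real^2$ such that the harmonic function $h = a_1\MAP_1 + a_2\MAP_2$ has $\nabla h(z_0) = 0$. A nonconstant harmonic function with a critical point at $z_0$ behaves locally like the real part of $c(z-z_0)^k$ for some integer $k\ge 2$, so its nodal set $\{h = h(z_0)\}$ near $z_0$ consists of $2k\ge 4$ arcs emanating from $z_0$; consequently the level set $\{h=h(z_0)\}$ in $\overline\DISK$ must meet $\partial\DISK$ in at least four points. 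But $h$ restricted to $\partial\DISK$ equals $a_1\BMAP_1+a_2\BMAP_2$, which, since $\BMAP$ parametrizes the boundary of the \emph{convex} set $\IMG$, is a function whose super-level and sub-level sets on the circle are each a single arc (a linear functional restricted to a convex curve changes from increasing to decreasing at most once going around); hence $\{h=h(z_0)\}\cap\partial\DISK$ has at most two points --- a contradiction. This forces $J_\MAP$ to be nowhere zero, and by continuity and connectedness it has a fixed sign, so $\MAP$ is an orientation-consistent local diffeomorphism on $\DISK$.

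For steps (ii) and (iii) I would invoke a degree/argument-principle argument. Since $\BMAP:\partial\DISK\to\BND$ is a homeomorphism onto the simple closed curve bounding $\IMG$, for any point $w_0\in\IMG$ the winding number of $\BMAP$ about $w_0$ is $\pm 1$; replacing $\MAP$ by a reflected copy if necessary we may take it to be $+1$. For $w_0\notin\overline\IMG$ the winding number is $0$. The number of preimages of a regular value $w_0$ under $\MAP$, counted with the sign of $J_\MAP$, equals this winding number by the argument principle applied to $\MAP-w_0$ on $\DISK$; since $J_\MAP$ has constant sign, this signed count is an honest count, giving exactly one preimage for $w_0\in\IMG$ and none for $w_0\notin\overline\IMG$. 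Thus $\MAP$ maps $\DISK$ bijectively onto $\IMG$; combined with continuity up to the boundary, $\MAP=\BMAP$ on $\partial\DISK$, and compactness, $\MAP$ is a continuous bijection of the compact space $\overline\DISK$ onto $\overline\IMG$, hence a homeomorphism.

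The main obstacle is step (i): one must carefully exploit convexity of $\IMG$ exactly once, in the claim that a linear functional $a_1\BMAP_1 + a_2\BMAP_2$ restricted to $\partial\DISK$ has connected super- and sub-level sets. This is where non-convex targets break the proof, and it is the step the rest of the paper is devoted to repairing via the cone condition. A secondary technical point is justifying the local normal form for harmonic functions at a critical point (and hence the ``at least four boundary intersections'' count); this is standard complex analysis --- $h$ is the real part of a holomorphic function whose derivative has a zero of some order $k-1\ge 1$ at $z_0$ --- but it should be stated cleanly, and one should note the edge case where $h$ is constant (which cannot happen here since $\BMAP$ is injective, so $a_1\BMAP_1+a_2\BMAP_2$ is nonconstant on $\partial\DISK$).
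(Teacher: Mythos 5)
The paper does not prove Theorem~\ref{cont:RKC}; it is quoted as a classical result with references to Rad\'o, Kneser, Choquet and Duren, so there is no in-paper argument to compare against. Your proposal is the standard Kneser--Choquet proof (essentially the one in Duren's book on harmonic mappings), and its skeleton is correct: the only place convexity enters is, as you say, the connectedness of the super- and sub-level sets of $a_1\BMAP_1+a_2\BMAP_2$ on $\partial\DISK$, and the degree argument then converts the nonvanishing Jacobian into global injectivity. Three small points are worth tightening. First, the step from ``$2k\ge 4$ local arcs of $\set{h=h(z_0)}$'' to ``at least four boundary intersections'' is not automatic: the clean version argues that each component of $\set{h>h(z_0)}$ and $\set{h<h(z_0)}$ must reach $\partial\DISK$ by the maximum principle, that connectedness of the boundary super/sub-level arcs forces each of these open sets to be connected, and that a Jordan-curve argument then contradicts the alternation of $2k\ge 4$ sectors at $z_0$. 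Second, your appeal to ``a lemma of Lewy'' is misattributed and unnecessary --- Lewy's theorem deduces $J_\MAP\ne 0$ from local injectivity, which is the converse of what you need; the Rad\'o--Kneser level-set argument you then give is self-contained and is the right tool. Third, in step (iii) you should note explicitly that no interior point can map onto $\BND$ (immediate from openness of $\MAP$ as a local diffeomorphism together with degree zero outside $\overline\IMG$), so that interior and boundary preimages stay disjoint and the continuous bijection $\overline\DISK\to\overline\IMG$ is well defined. With these standard repairs the proof is complete.
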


The RKC theorem is known to be sharp, in the following sense:
\begin{theorem} [Choquet \cite{choquet1945type}] \label{thm:Choquet}
If $\IMG$ is not convex, there exists a homeomorphism $\BMAP:\partial \DISK \to \partial \IMG$ such that the harmonic extension $\MAP$ is not a homeomorphism. 
\end{theorem}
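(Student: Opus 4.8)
The plan is to argue by a direct geometric construction: since $\IMG$ is not convex, there is a boundary point $p \in \partial\IMG$ and a supporting-type chord that cuts off a "pocket" of $\IMG$ lying on the far side of some line. Concretely, pick three points $a, b, c$ on $\BND = \partial\IMG$, appearing in this cyclic order, such that the straight segment $[a,c]$ is \emph{not} contained in $\overline{\IMG}$ while $b$ lies on the arc of $\BND$ between $a$ and $c$ on the concave side; non-convexity of $\IMG$ guarantees such a configuration exists. The idea is then to choose the homeomorphism $\BMAP : \partial\DISK \to \BND$ so that it sends three arcs of $\partial\DISK$ (of nearly the full circle, leaving only tiny complementary arcs near three prescribed points) very close to the single points $a$, $b$, $c$ respectively, with $\BMAP$ interpolating quickly across the three short arcs. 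In other words, $\BMAP$ is a "staircase"-like homeomorphism that spends almost all of $\partial\DISK$ near $\{a,b,c\}$.

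The key step is a limiting/continuity argument for the Poisson extension. As the three transition arcs shrink, the boundary data converges (in $L^1$ of the circle, hence the Poisson extensions converge locally uniformly on $\DISK$) to the boundary data of a harmonic map $\MAP_0$ whose boundary trace is the step function taking the three values $a, b, c$ on three arcs of total measure $2\pi$. But the Poisson extension of a three-valued step function is an affine (indeed, the unique harmonic) map whose image is the open triangle with vertices $a, b, c$ — each value is attained as a boundary limit only at the two jump points, and the interior maps onto $\mathrm{int}\,\conv\{a,b,c\}$, covering that triangle. Since $[a,c]$ (an edge of this triangle) is not contained in $\overline{\IMG}$, a definite-size subregion of the triangle $\mathrm{int}\,\conv\{a,b,c\}$ lies outside $\overline{\IMG}$. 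By local uniform convergence, for all sufficiently short transition arcs the extension $\MAP$ of $\BMAP$ maps some interior point of $\DISK$ to a point outside $\overline{\IMG}$. Since $\MAP$ is continuous on $\DISK$ and, by the maximum principle on each coordinate together with $\BMAP(\partial\DISK) = \BND = \partial\IMG$, one still has $\MAP(\overline{\DISK}) \subseteq \overline{\conv(\IMG)}$; but more to the point, a harmonic homeomorphism of $\overline{\DISK}$ onto $\overline{\IMG}$ would have to satisfy $\MAP(\DISK) = \IMG$, so hitting a point outside $\overline{\IMG}$ immediately contradicts injectivity-plus-openness: $\MAP$ cannot be a homeomorphism onto $\overline{\IMG}$.

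A little care is needed at two points. First, one should check that the constructed $\BMAP$ is genuinely a homeomorphism onto all of $\BND$ — this is automatic since on each short transition arc we let $\BMAP$ traverse the corresponding sub-arc of $\BND$ monotonically, so $\BMAP$ is a continuous bijection from the compact circle onto $\BND$, hence a homeomorphism. Second, one must make the statement "$\MAP$ hits a point outside $\overline{\IMG}$" rigorous: fix an open ball $U \subset \mathrm{int}\,\conv\{a,b,c\} \setminus \overline{\IMG}$ and a point $z_0 \in \DISK$ with $\MAP_0(z_0)$ at the center of $U$; local uniform convergence $\MAP \to \MAP_0$ near $z_0$ then forces $\MAP(z_0) \in U$ for the transition arcs small enough.

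I expect the main obstacle to be the verification that a non-convex $\IMG$ bounded by a simple closed curve always admits the triple $(a,b,c)$ with $[a,c]\not\subseteq\overline{\IMG}$ and $b$ on the "correct" arc: one takes a point $q\in\partial\IMG$ that is not on $\partial\conv(\IMG)$ (such $q$ exists precisely because $\IMG\neq\conv(\IMG)$), lets $[a',c']$ be an edge of $\partial\conv(\IMG)$ whose relative interior contains a point not in $\overline{\IMG}$, and takes $a,c$ slightly inside $[a',c']$ on $\partial\IMG$ and $b$ a point of $\partial\IMG$ separated from $[a,c]$ — the Jordan curve theorem handles the topology. Everything else is a routine Poisson-kernel continuity estimate.
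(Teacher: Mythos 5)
Your argument is correct and is essentially the paper's own proof (Section~\ref{sect:alt_proof_choquet}): both concentrate the boundary homeomorphism near finitely many points so that, by $L^1$-continuity of the Poisson integral, the harmonic extension converges locally uniformly to the extension of a step function whose image leaves $\overline{\IMG}$ along a chord not contained in $\overline{\IMG}$. The only difference is that you use three concentration points (so the limit image is an open triangle, which requires the classical fact that the three-value step extension covers that triangle, or at least accumulates on the open segment joining $a$ and $c$), whereas the paper uses two antipodal points and the explicit one-dimensional formula along the diameter, so the limit image is just the open segment $(\BND_a,\BND_b)$.
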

An alternative proof of Theorem~\ref{thm:Choquet}, with a more explicit construction, was recently given by Alessandrini and Nesi \cite{alessandrini2009invertible}. We provide an even simpler proof in Section~\ref{sect:alt_proof_choquet}.

On the other hand, even when $\partial \IMG$ encloses a non-convex domain, there always exist boundary maps whose harmonic extension, \ie, the solution  to \eqref{eqn:cont_harmonic} $\MAP$,  is a homeomorphism. In fact, for any  simple closed curve $\BND = \partial \IMG$, there always exists a specific choice of the boundary map $\BMAP:\partial \DISK \to \partial \IMG$ that yields a homeomorphic $\MAP$: one such boundary map can be constructed by considering the Riemann mapping \cite[p.~420]{palka1991introduction} from the interior of the unit disk to the interior of the domain enclosed by $\partial \IMG$, which is a harmonic homeomorphism. Then, by Caratheodory's theorem \cite[p.~445]{palka1991introduction}, there is a continuous extension to the boundary, yielding the desired $\gamma$. 

The following theorem, by Alessandrini and Nesi, states specific conditions that ensure that the harmonic extension $\phi$ is injective even when the target boundary is non-convex, assuming the boundary map is $C^1$. 

\begin{theorem}[Alessandrini--Nesi \cite{alessandrini2009invertible,alessandrini2017errata}] \label{thm:cont_AN} Let $\BMAP:\partial \DISK \to \partial \IMG$ be an orientation preserving diffeomorphism of class $C^1$ onto a simple closed curve $\BND = \partial \IMG$ and let $\MAP \in C^2(\DISK, \Real^2) \cap C^1(\overline{\DISK}, \Real^2)$ be a solution of \eqref{eqn:cont_harmonic}. 

The mapping $\MAP$ is a diffeomorphism of $\overline{\DISK}$ onto $\overline{\IMG}$ if and only if
\begin{equation*}
\det D\MAP > 0 \quad \mbox{everywhere on}~\partial \mathbb{\DISK}.
\end{equation*}
\end{theorem}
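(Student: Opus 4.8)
The plan is to dispose of the ``only if'' direction in one line and put all the work into ``if''. If $\MAP$ is a diffeomorphism of $\overline{\DISK}$ onto $\overline{\IMG}$, then $\det D\MAP$ never vanishes, hence has constant sign on the connected set $\overline{\DISK}$; since $\MAP$ carries $\DISK$ into $\IMG$ and restricts on $\partial\DISK$ to the orientation-preserving $\BMAP$, that sign is $+$, which is the stated condition. So from now on I assume $\det D\MAP>0$ on $\partial\DISK$ and aim to prove $\MAP$ is a diffeomorphism.

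The key is to exploit the complex structure. Viewing $\MAP$ as $\mathbb{C}$-valued, harmonicity gives $\MAP_{z\bar z}=\tfrac14\Delta\MAP=0$, so $P:=\MAP_z$ and $Q:=\overline{\MAP_{\bar z}}$ are holomorphic in $\DISK$ and (since $\MAP\in C^1(\overline{\DISK})$) continuous up to $\partial\DISK$, with the identity $\det D\MAP=|P|^2-|Q|^2$. I would then differentiate the boundary relation $\MAP(e^{i\theta})=\BMAP(e^{i\theta})$ in $\theta$, obtaining $i\bigl(e^{i\theta}P(e^{i\theta})-e^{-i\theta}\overline{Q(e^{i\theta})}\bigr)=\partial_\theta\BMAP(e^{i\theta})$. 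The right-hand side is the tangent vector field of the regular, orientation-preserving, simple closed $C^1$ curve $\BND$, so by Hopf's theorem of turning tangents it has winding number $+1$ about the origin; since $\det D\MAP>0$ on $\partial\DISK$ is exactly the inequality $|P|>|Q|$ there, Rouch\'e's theorem in the topological form (needing only $|f-g|<|f|$ on the curve, hence valid for merely continuous $f,g$) transfers this winding number to $\theta\mapsto e^{i\theta}P(e^{i\theta})$. Finally the argument principle for $z\mapsto zP(z)$ — holomorphic in $\DISK$, continuous on $\overline{\DISK}$, nonvanishing on $\partial\DISK$, so that its interior zero-count equals its boundary winding number (run the usual formula on $|z|=r$ and let $r\to1$) — shows $zP(z)$ has exactly one zero in $\DISK$, the simple one at the origin; hence $P=\MAP_z$ is zero-free on $\overline{\DISK}$.

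With $P$ zero-free, $\mu:=Q/P$ is holomorphic in $\DISK$, continuous on $\overline{\DISK}$, and $|\mu|<1$ on $\partial\DISK$, so the maximum modulus principle yields $|\mu|<1$ throughout $\overline{\DISK}$, whence $\det D\MAP=|P|^2\bigl(1-|\mu|^2\bigr)>0$ everywhere. At that point $\MAP$ is an orientation-preserving $C^1$ local diffeomorphism of $\overline{\DISK}$ agreeing on $\partial\DISK$ with the homeomorphism $\BMAP$ onto $\BND=\partial\IMG$, and I would close with a standard degree/argument-principle computation: $\deg(\MAP,\DISK,q)$ equals the winding number of $\BMAP$ about $q$, namely $1$ for $q\in\IMG$ and $0$ for $q$ in the unbounded component of $\Real^2\setminus\BND$; since every value is regular and every preimage contributes $+1$, $\MAP$ restricts to a bijection of $\DISK$ onto $\IMG$, hence of $\overline{\DISK}$ onto $\overline{\IMG}$, and a bijective $C^1$ local diffeomorphism is a $C^1$-diffeomorphism.

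The main obstacle I anticipate is the middle step: converting the boundary positivity into the interior non-vanishing of $\MAP_z$ via the tangential-derivative identity and the turning-tangents theorem. That is the one genuinely non-formal move, and it is where the ``simple closed curve'' and ``orientation-preserving'' hypotheses are actually consumed. The surrounding analytic bookkeeping — Rouch\'e and the argument principle for functions only continuous up to $\partial\DISK$, the Umlaufsatz for a curve that is merely $C^1$, and the inverse function theorem up to the boundary — is routine but must be stated with its hypotheses checked.
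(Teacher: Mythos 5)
The paper does not actually prove this statement: Theorem~\ref{thm:cont_AN} is quoted as a known result of Alessandrini and Nesi, and the machinery the paper itself builds on for the neighboring results is a different one, namely their Theorem~1.7 (local injectivity at every boundary point implies global homeomorphism), which drives the proof of Theorem~\ref{thm:cont_main}. Your argument is therefore not ``the paper's proof,'' but it is a correct, self-contained proof along classical complex-analytic lines: the decomposition $\MAP=h+\bar g$ giving holomorphic $P=\MAP_z$, $Q=\overline{\MAP_{\bar z}}$ with $\det D\MAP=|P|^2-|Q|^2$; the tangential boundary identity combined with the Umlaufsatz and topological Rouch\'e to pin the winding number of $zP(z)$ at $1$; the argument principle (on circles $|z|=r$, $r\to1$) to conclude $P\neq0$ in $\overline{\DISK}$; the maximum principle for $\mu=Q/P$ to propagate $\det D\MAP>0$ into the interior; and degree theory to upgrade the local diffeomorphism to a global one. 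This is essentially the Duren/Alessandrini--Nesi style of argument and it is where the hypotheses (simple closed curve, orientation preservation, $C^1$ up to the boundary) are genuinely used. What your route buys is a proof that never invokes the local-to-global criterion of Theorem~\ref{thm:an_local}; what the paper's route buys is robustness under lower regularity, which is exactly why the authors need it for the piecewise-smooth generalization.

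Two places deserve one more sentence each. In the ``only if'' direction, the constancy of the sign is immediate, but the claim that the sign is $+$ needs the observation that $D\MAP$ at a boundary point carries the positively oriented frame (tangent, inward normal) of $\partial\DISK$ to a positively oriented frame adapted to $\partial\IMG$ (or, equivalently, a one-line degree computation). In the final degree argument, you should note explicitly that no interior point can be mapped onto $\BND$: since $\MAP$ is open where $\det D\MAP>0$, such a point would force preimages of exterior points, contradicting degree $0$ there; this is what lets you say $\MAP(\DISK)\subseteq\IMG$ before counting preimages.
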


\subsection{Harmonic extensions to non-smooth boundary maps} 
As a step towards the discrete case, we consider also mappings $\BMAP:\partial \DISK \to \BND$ that are only \emph{piecewise} differentiable, and aim to formulate a criterion for guaranteeing the harmonic extension $\MAP$ is a homeomorphism.

Towards that end, we draw inspiration from Alessandrini and Nesi (Theorem~\ref{thm:cont_AN}), but  replace their determinant condition with a weaker condition, expressed in terms of one-sided derivatives, thus extending their result to the case of a piecewise-smooth boundary. We will assume that the derivative $\BMAP' = {d\BMAP}/{d\theta}$ of the boundary map exists and is strictly bounded away from zero in all but finitely many points. We denote the set of boundary points in which the derivative does not exist as $\CUSPS \subset \partial \DISK$. We will also assume that the normal derivative $\partial \MAP / \partial \nu$ of the harmonic extension is defined at all points in $\partial\DISK$, does not vanish, and is continuous on $\partial \DISK$. For points $e^{i\theta} \in \CUSPS \subset \partial \DISK$ where the derivative $\BMAP' = {d\BMAP}/{d\theta}$ does not exist, we assume that the one-sided derivatives $\BMAP'_+ $ and $\BMAP'_-$ exist and do not vanish.

\begin{figure}[t!]
  \centering
      \includegraphics[width=.9\columnwidth]{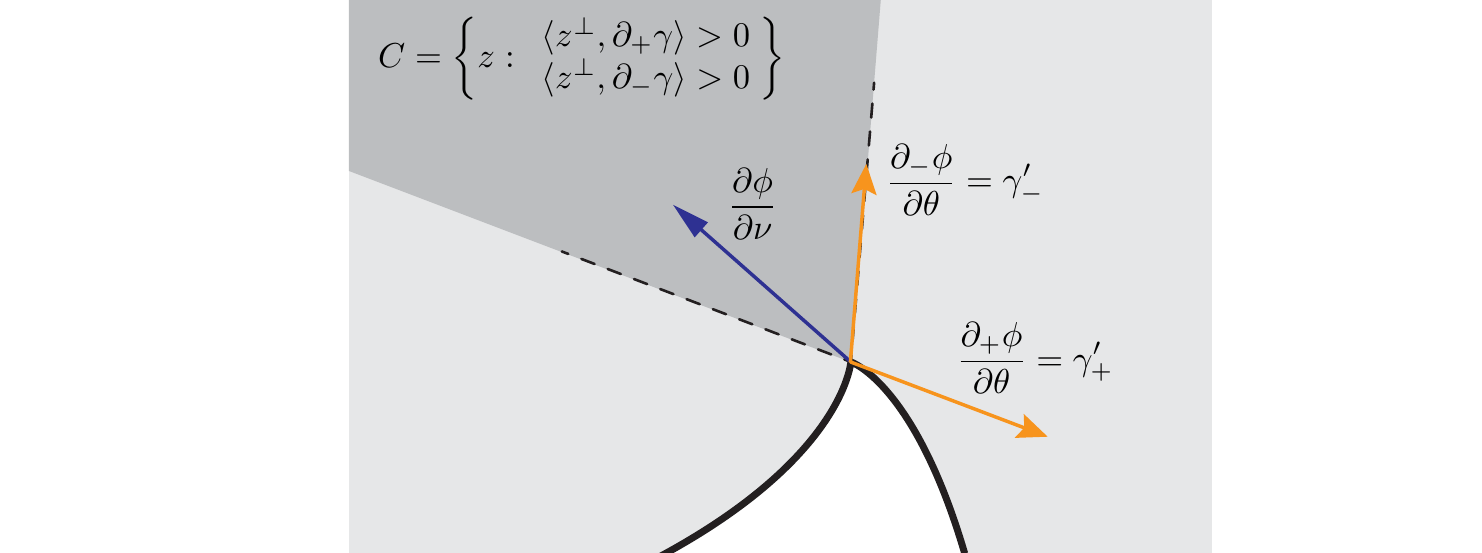}
  \caption{The \emph{cone condition}: the normal derivative is contained within the intersection of the ``inward-pointing'' half-planes supporting the one-sided derivatives.}
  \label{fig:cont_cone}
\end{figure}

For every boundary point $e^{i\theta} \in\partial\DISK$ we define a \emph{cone} $\CC_\theta$ as the set resulting from the intersection of the two ``inward-pointing'' half-planes supporting the vectors of the one-sided derivatives $\BMAP'_+$ and $\BMAP'_-$; see figure~\ref{fig:cont_cone} for an illustration in which the cone is colored in dark-grey.

Formally, suppose  $\BMAP$ is an orientation-preserving homeomorphism of the boundary, \ie, it traverses $\BND$ in the counter-clockwise direction. The open cone $\CC_\theta$ at point $e^{i\theta}$ is defined as
\begin{equation} \label{eqn:cont_cone}
    \CC_\theta = 
    \set{z\in \Real^2 : \ip{z^\perp,\BMAP'_+\parr{e^{i\theta}}}>0 \mbox{ and } \ip{z^\perp,\BMAP'_-\parr{e^{i\theta}}}>0},
\end{equation}
where $z^\perp$ denotes the $\pi/2$ clockwise rotation of $z$. 

\pagebreak
\begin{wrapfigure}[10]{r}{0.13\columnwidth}
\vspace{-6pt}
\hfill
\includegraphics[width=0.10\columnwidth]{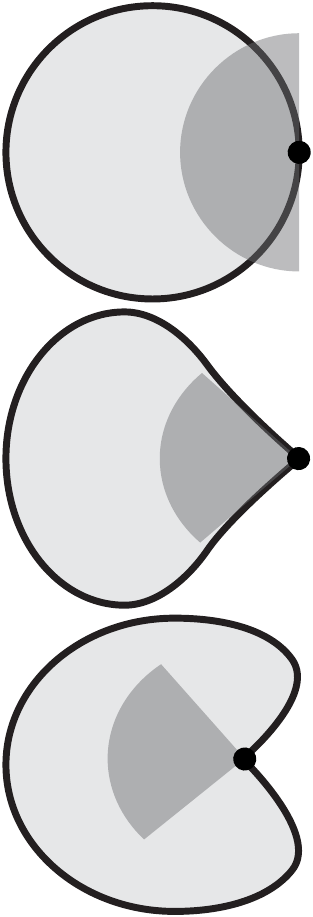}
\end{wrapfigure}
Note that for a differentiable boundary point, \ie, $e^{i\theta} \notin \CUSPS$, the two one-sided derivatives are equal,  $\BMAP'_+\parr{e^{i\theta}} = \BMAP'_-\parr{e^{i\theta}}$. Thus, in this case, $\CC_\theta$ is simply a half-space. On the other hand, if the one-sided derivative point in opposite directions (in which case the opening angle is $0$ or $2\pi$) then the cone $\CC_\theta$ is empty. Lastly, note that since $\CC_\theta$ is an intersection of two half-spaces, it has an opening angle of at most $\pi$, regardless of the opening angle of $\partial \IMG$ at that point (see inset).

Given a map $\phi:\DISK\to\IMG$ we say it satisfies the \emph{cone condition} at $e^{i\theta} \in \partial \DISK$ if the following equation holds
\begin{equation}
\label{eqn:cone_cond}
\frac{\partial \MAP}{\partial \nu}\parr{e^{i\theta}} \in \CC_\theta.
\end{equation}
Intuitively, the cone condition requires that the derivative of the harmonic map in the normal direction is contained within the cone.

It turns out that this simple condition can be used to formulate a theorem similar to that of Alessandrini-Nesi's, in which the cone condition replaces the determinant condition, ensuring that the resulting map $\phi$ is a homeomorphism into $\IMG$. Furthermore, we will show that, as the cone condition permits the boundary map $\gamma$ to have derivative discontinuities, it extends Alessandrini-Nesi's result to harmonic maps into domains with a non-smooth boundary. More precisely,
\begin{restatable}{theorem}{contmain}
\label{thm:cont_main}
Assume that $\BMAP:\partial \DISK \to \BND$ is an orientation preserving homeomorphism onto a simple closed curve $\BND = \partial \IMG$ that is $C^1$ at all but a finite set of points $\CUSPS \subset \partial \DISK$. Also assume that $\BMAP'$ does not vanish on $\partial \DISK \setminus \CUSPS$, and that for each point in $\CUSPS$ the one-sided derivatives $\BMAP'_+ $ and $\BMAP'_-$ exist, do not vanish and equal the limit of $\BMAP'$ on the corresponding side. 

Let $\MAP \in C^2(\DISK, \Real^2) \cap C^1(\overline{\DISK} \setminus \CUSPS, \Real^2)$  be a solution of \eqref{eqn:cont_harmonic}. Further assume that everywhere on $\partial \DISK$, the normal derivative $\partial \MAP/\partial \nu$ exists, is continuous and does not vanish. The map $\MAP$ is a homeomorphism of $\overline{\DISK}$ onto $\overline{\IMG}$ if and only if it satisfies the cone condition \eqref{eqn:cone_cond} everywhere on $\partial \DISK$.
\end{restatable}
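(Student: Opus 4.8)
The plan is to recast the statement as a Jacobian-positivity statement and then invoke a degree argument, using the Weierstrass representation of planar harmonic maps throughout: write $\MAP = h + \bar g$ with $h,g$ holomorphic on $\DISK$, so that $\det D\MAP = \abs{h'}^2 - \abs{g'}^2$ on $\DISK$ and the analytic dilatation $\omega := g'/h'$ is meromorphic. The first step is a bookkeeping computation: expressing $\partial\MAP/\partial\nu$ and $\partial\MAP/\partial\theta = \BMAP'$ at a boundary point in terms of the (one-sided) boundary values of $h'$ and $g'$, one checks that on $\partial\DISK$ one has $\ip{(\partial\MAP/\partial\nu)^\perp,\BMAP'} = \det D\MAP$ (reading both sides via the $C^1$ extension off $\CUSPS$, and via one-sided limits at $\CUSPS$), so that the cone condition at $e^{i\theta}$ is equivalent to the inequality $\abs{h'} > \abs{g'}$ holding there on the relevant side(s). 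Consequently, the cone condition everywhere on $\partial\DISK$ is the same as: $\det D\MAP > 0$ on the dense arc $\partial\DISK\setminus\CUSPS$ (at smooth points this is precisely the Alessandrini--Nesi determinant condition of Theorem~\ref{thm:cont_AN}, so for $\CUSPS=\emptyset$ the argument below recovers theirs), plus the two one-sided strict inequalities at each point of $\CUSPS$. I also record that a nonempty cone forces the one-sided tangents not to be antiparallel, so under the cone condition $\BND$ is a positively oriented, piecewise-$C^1$ Jordan curve all of whose corners have interior angle in $(0,2\pi)$; in particular its unit tangent has total turning $2\pi$.

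For the "if" direction I would first upgrade Jacobian positivity from the boundary into the interior. Since $\abs{h'} > \abs{g'} \geq 0$ on $\partial\DISK\setminus\CUSPS$, $h'$ is zero-free there; by the argument principle the number of zeros of $h'$ in $\DISK$ equals the winding of its boundary values, and the identities above together with a Rouch\'e comparison (legitimate since $\abs{h'} > \abs{g'}$) identify that winding, up to the $+1$ coming from the factor $e^{i\theta}$, with the winding of the tangent $\BMAP'$ of $\BND$, which is $+1$ by the piecewise-$C^1$ Umlaufsatz --- forcing $h'$ to have no zeros in $\DISK$. Hence $\omega = g'/h'$ is holomorphic on $\DISK$ with $\abs{\omega} < 1$ on $\partial\DISK\setminus\CUSPS$; since $\CUSPS$ is finite and a bounded holomorphic function is not obstructed by finitely many boundary points, the maximum modulus principle gives $\abs{\omega} < 1$ on $\DISK$ and thus $\det D\MAP = \abs{h'}^2(1 - \abs{\omega}^2) > 0$ throughout $\DISK$. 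From here it is standard: $\MAP$ is an orientation-preserving local homeomorphism on $\DISK$; the topological degree $\deg(\MAP,\DISK,w_0)$ equals the winding of $\BMAP$ about $w_0$, hence $1$ for $w_0 \in \IMG$ and $0$ for $w_0 \notin \overline\IMG$; since the local degree at every preimage is $+1$, every point of $\IMG$ has exactly one preimage, and $\MAP(\DISK)$ meets neither the exterior of $\overline\IMG$ nor (by openness of $\MAP$) $\BND$, so $\MAP(\DISK) = \IMG$ and $\MAP\colon\DISK\to\IMG$ is a continuous open bijection. Being a homeomorphism of the interiors that agrees with the homeomorphism $\BMAP$ on $\partial\DISK$ (and using that it is proper), $\MAP$ extends to a homeomorphism $\overline\DISK\to\overline\IMG$.

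For the "only if" direction, if $\MAP$ is a homeomorphism of $\overline\DISK$ onto $\overline\IMG$ then it is orientation-preserving because $\BMAP$ is, so $\det D\MAP \geq 0$ in $\DISK$, and by Lewy's theorem (the Jacobian of a univalent harmonic map never vanishes) $\det D\MAP > 0$ in $\DISK$, i.e. $\omega$ is holomorphic with $\abs{\omega} < 1$ there. It remains to show $\abs{\omega} < 1$ up to the boundary (and for both one-sided limits at $\CUSPS$), which by the bookkeeping step is exactly the cone condition. Passing to the limit from the interior gives $\abs{\omega}\leq 1$ on $\partial\DISK$; the main obstacle is upgrading this to a strict inequality, i.e. ruling out $\abs{\omega(p)} = 1$ --- equivalently, ruling out that $\partial\MAP/\partial\nu$ is parallel to the boundary tangent at some $p$. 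I expect this to require a boundary-point (Hopf-type) lemma: such tangency, together with the $C^1$-regularity of $\BND$ at $\MAP(p)$ and harmonicity, would force the image of the inward normal ray at $p$ to be tangent at $\MAP(p)$ to $\BND$ to high order, contradicting the local injectivity of the homeomorphism $\MAP$ there; the genuinely degenerate corners (interior angle $0$ or $2\pi$, empty cone) never arise under the hypotheses, since antiparallel one-sided tangents are incompatible with the one-sided derivatives being both finite and nonzero (by the Carath\'eodory-type boundary behavior near such a cusp). The only other point needing care, already flagged, is that every maximum-principle and degree step in the "if" direction is genuinely insensitive to the finite exceptional set $\CUSPS$, which holds because $\CUSPS$ has measure zero on $\partial\DISK$ and $\omega$ and the degree are unaffected by finitely many boundary points.
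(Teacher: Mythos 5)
Your route---the Weierstrass decomposition $\MAP = h + \bar g$, the identity $\ip{(\partial\MAP/\partial\nu)^\perp,\BMAP'} = \det D\MAP$ on $\partial\DISK$, an argument-principle/Rouch\'e count to exclude zeros of $h'$, the maximum principle for $\omega = g'/h'$, and a degree argument---is genuinely different from the paper's. The paper quotes Alessandrini--Nesi's Theorem~\ref{thm:an_local} (global homeomorphism $\iff$ local homeomorphism at every boundary point) and reduces everything to a purely local statement at each boundary point; at the cusps this is proved by a Poisson-kernel estimate showing that the monotonicity of $\BMAP$ in the direction perpendicular to $\partial\MAP/\partial\nu$ (which is exactly what the cone condition gives) persists slightly into the interior, yielding uniform transversality of the polar derivative frame near the cusp (Lemma~\ref{lemma:cont_bnd_local_homeo}). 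Your approach is instead a global Jacobian-positivity argument in the spirit of the original Alessandrini--Nesi proof for $C^1$ boundaries.

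The difficulty is that your global steps break down precisely at the points of $\CUSPS$, which are the whole content of the theorem beyond Theorem~\ref{thm:cont_AN}. Concretely: (i) at a corner of $\BND$ the holomorphic derivatives $h'$, $g'$ do not have one-sided boundary values along the circle --- near the preimage $p$ of a corner of interior angle $\alpha$ they typically behave like $(z-p)^{\alpha/\pi-1}$, blowing up or vanishing --- so the argument-principle count of the zeros of $h'$ requires indented contours and local estimates at each point of $\CUSPS$ that you have not supplied; moreover the winding of $\BMAP'$ along the smooth arcs is $2\pi$ \emph{minus the sum of the exterior angles at the corners} (the piecewise Umlaufsatz counts the jumps), so the ledger ``winding $=+1$, hence $h'$ is zero-free'' acquires corner corrections that must be shown to cancel the indentation contributions. (ii) Deducing $\abs{\omega}<1$ in $\DISK$ from $\abs{\omega}<1$ on $\partial\DISK\setminus\CUSPS$ because ``a bounded holomorphic function is not obstructed by finitely many boundary points'' presupposes that $\omega$ is bounded near $\CUSPS$, which is not known a priori; radial limits of modulus at most $1$ off a finite set do not bound a holomorphic function ($\exp\parr{(1+z)/(1-z)}$ is the standard counterexample), so a Nevanlinna-class or boundedness argument for $\omega$ near the cusps is genuinely needed. (iii) In the ``only if'' direction you explicitly leave the key step --- excluding $\abs{\omega}=1$ at a boundary point via a Hopf-type lemma --- as an expectation. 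So the proposal is a plausible program whose missing pieces are exactly the parts that would have to be new relative to the smooth-boundary case.
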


\begin{proof}
A main ingredient of our proof is another theorem by  Alessandrini and Nesi: 
\begin{theorem}[\cite{alessandrini2009invertible}, Theorem 1.7] \label{thm:an_local}
Let $\BMAP: \partial \DISK \to \BND \subset \Real^2$ be a homeomorphism onto a simple closed curve $\BND = \partial \IMG$ and let $\MAP \in W_{\textrm{loc}}^{1,2}(\DISK, \Real^2) \cap C(\overline{\DISK}, \Real^2)$ be the solution of \eqref{eqn:cont_harmonic}. 

The mapping $\MAP$ is a homeomorphism of $\overline{\DISK}$ onto $\overline{\IMG}$ if and only if, for every point $e^{i\theta} \in \partial \DISK$, the mapping $\MAP$ is a local homeomorphism at $e^{i\theta}$ (\ie, there exists a neighborhood of $e^{i\theta}$ where $\MAP$ is injective). 
\end{theorem}

One direction of the proof (``if'') can thus be obtained by showing that for every boundary point $e^{i\theta} \in \partial \DISK$, the cone condition implies that $\MAP$ is a local homeomorphism at $e^{i\theta}$. 

The cone condition $\frac{\partial \MAP}{\partial \nu} \parr{e^{i\theta}} \in \CC_\theta$ implies that
\begin{equation} \label{eqn:cont_innerprod}
    \ip{\Bigl(\frac{\partial \MAP}{\partial \nu}\Bigr)^\perp, \frac{\partial_\pm \MAP}{\partial \theta}} > 0
\end{equation}
at the boundary point $e^{i\theta} \in \partial \DISK$.

For differentiable boundary points $e^{i\theta} \in \partial \DISK \setminus S$, \eqref{eqn:cont_innerprod} along with continuity imply that the normal and tangential derivatives are linearly independent in a neighborhood of $e^{i\theta}$. Hence, the differential of the map $D\MAP$ is invertible and, by the inverse function theorem, there exists a neighbourhood of $e^{i\theta}$ in which $\MAP$ is a local homeomorphism.

For a point $e^{i\theta} \in S \subset \partial \DISK$, where the derivative is discontinuous, the argument is more subtle, but follows a similar idea. In this case, the cone condition \eqref{eqn:cont_innerprod} implies that the boundary map $\BMAP:\partial \DISK \to \BND \subset \Real^2$ is locally monotone in the direction perpendicular to the normal derivative at $e^{i\theta}$; see Figure~\ref{fig:cone_monotinicity} for an illustration. 
\begin{figure}[t!]
  \centering
      \includegraphics[width=.9\columnwidth]{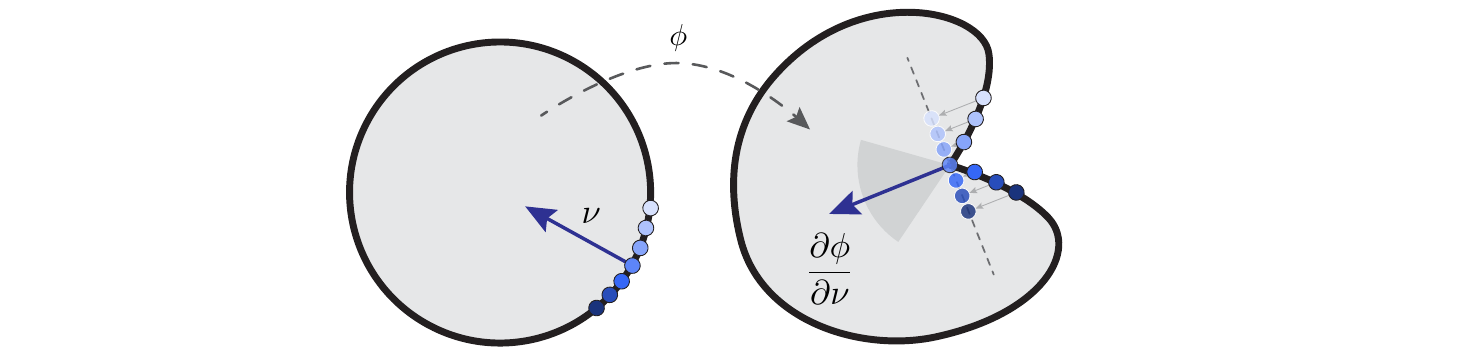}
  \caption{Local monotonicity: The cone condition $\frac{\partial \MAP}{\partial \nu} \in \CC_\theta$ implies that the boundary map $\BMAP:\partial \DISK \to \BND$ is locally monotone in the direction perpendicular to the normal derivative $\frac{\partial \MAP}{\partial \nu}$.}
  \label{fig:cone_monotinicity}
\end{figure}
Namely, there exists a $\delta$-neighborhood of $\theta$ and $c>0$ such that for all $s<t$ in $\brac{\theta-\delta,\theta+\delta}$ we have
\begin{equation} \label{eqn:bnd_tan_monotone}
    \ip{\Bigl(\frac{\partial \MAP}{\partial \nu}\Bigr)^\perp, \BMAP\parr{e^{it}}-\BMAP\parr{e^{is}}} \geq c\parr{t-s}.
\end{equation}
(With this notation we assume, without loss of generality, that $\abs{\theta}$ is sufficiently far away from $\pi$.) 
One can leverage this monotonicity to establish a uniform bound on the angle between the normal and tangential derivatives of the harmonic extension, $\partial \MAP/\partial \nu$ and $\partial \MAP/\partial \theta$, in a neighborhood of $e^{i\theta}$. Thus, the cone condition implies that, in a sufficiently small neighborhood of $e^{i\theta}$, the normal and tangential derivatives are well-behaved and define a consistent local coordinate system. More precisely, we prove (following this line of reasoning) in Appendix~\ref{sect:apndx_proof_cont} the following Lemma:
\begin{restatable}{lemma}{contbndlocalhomeo}
\label{lemma:cont_bnd_local_homeo}
Under the conditions of Theorem~\ref{thm:cont_main}, the harmonic extension $\MAP$ is a local homeomorphism around all boundary points $e^{i\theta}$, $-\pi \leq \theta \leq \pi$.
\end{restatable}
Thus we see that if the cone condition holds for all boundary points $\partial \DISK$
then (since $\MAP$ is a local-homeomorphism at all points on $\partial\DISK$) Theorem~\ref{thm:an_local} implies $\MAP$ is a homeomorphism of $\DISK$ onto $\IMG$. 

For the other direction (``only if''), if $e^{i\theta} \in \partial \DISK$ does not satisfy the cone condition then the inner product in \eqref{eqn:cont_innerprod} is either zero or negative, for at least one of the one-sided derivatives; without loss generality we shall assume the cone condition is violated for $\partial_+ \MAP / \partial \theta$. If \eqref{eqn:cont_innerprod} vanishes then $\partial \MAP / \partial \nu$ and $\partial_+ \MAP / \partial \theta$ are colinear, in which case Theorem~\ref{thm:an_local} implies that $\MAP$ is not a homeomorphism. Otherwise, if \eqref{eqn:cont_innerprod} is negative, then a neighborhood of $e^{i\theta} \in \partial \DISK$ in $\DISK$ is mapped outside of $\overline{\IMG}$, as well implying that $\MAP$ is not a homeomorphism.
\end{proof}


\section{Discrete-Harmonic Mappings}
\label{sect:discrete}

In this section we discuss discrete analogs of the results presented in Section~\ref{sect:cont}.

\subsection{Preliminaries} \label{sect:prelim}

\paragraph{Triangulations.}
We consider a triangulation $\TT=(V,E,F)$ over a finite set of vertices $V = \set{v_1,\dots,v_n}$ defined by a set of faces (triangles)
$$
F \subseteq \set{\set{v_i,v_j,v_k}: v_i,v_j,v_k\in V \textrm{~are distinct}},
$$
and a corresponding set of edges
$$
E = \set{\parr{v_i,v_j}\in V^2:\set{v_i,v_j,v_k}\in F \textrm{~for some~}v_k\in V}.
$$
We think of the set of faces $F$ as (non-ordered) subsets of vertex triplets. The edges $E$ are \emph{ordered pairs} of vertices, corresponding to the directed edges connecting the vertices of each face.

A \emph{boundary edge} of the triangulation is an edge associated with only a single triangle; that is, $\parr{v_i,v_j}\in E$ is a boundary edge if there exists a unique $v_k\in V$ with $\set{v_i,v_j,v_k}\in F$. The boundary of the triangulation is the union of all boundary edges. We say that a vertex is a \emph{boundary vertex} if it belongs to a boundary edge and an \emph{interior vertex} otherwise. We say that a triangulation is \emph{3-connected} if it remains connected after the removal of any two vertices and their incident edges. 

A \emph{drawing} of a given triangulation $\TT$ is a mapping of each vertex $v_i\in V$ to a distinct point $x_i\in\Real^2$ of the plane and of each edge $(v_i,v_j)\in E$ to a simple curve with endpoints $x_i$ and $x_j$. A drawing is \emph{intersection-free} if its edges do not intersect except, possibly, at common endpoints. We say that a drawing is \emph{proper} if its faces coincide with the faces of $\TT$ and the edges of its external (unbounded) face coincide with the boundary edges of $\TT$. A triangulation $\TT$ has \emph{disk topology} if its boundary is a cycle and it has a proper intersection-free drawing. See Figure~\ref{fig:proper_drawings} for illustrations. 

\begin{figure}[t!]
  \centering
  \includegraphics[width=.9\columnwidth]{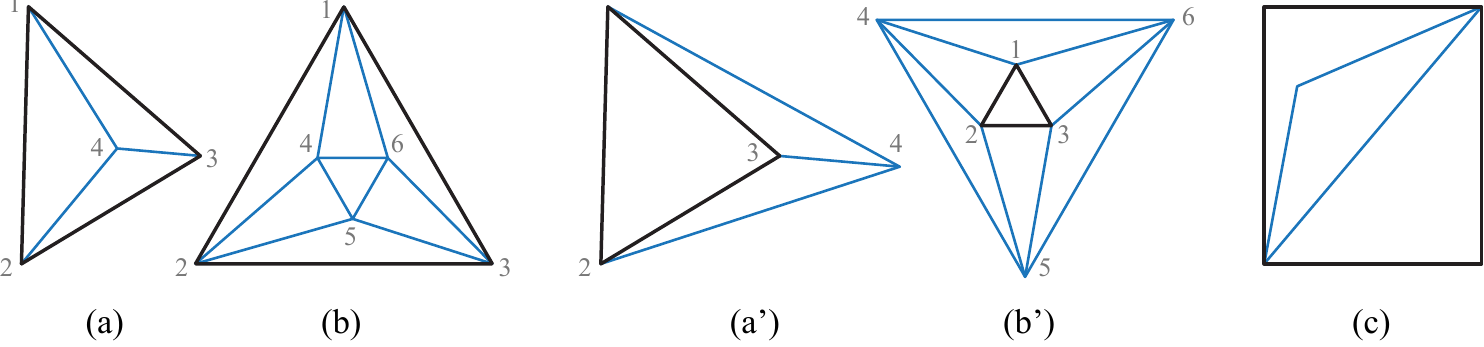}
  \caption{(a)-(b) illustrate proper intersection-free  drawings of two simple triangulations with disk topology; boundary edges are colored black and interior edges blue. (a')-(b') demonstrate non-proper realizations of the same triangulations, respectively; vertices are numbered correspondingly. (c) is an intersection-free drawing of a triangulation that does not have disk topology (note that the diagonal edge is associated with three triangles). }
  \label{fig:proper_drawings}
\end{figure}

Lastly, a \emph{straight-line drawing} is a drawing in which each edge is realized by a straight line segment connecting its endpoints; we use a pair $\parr{\x,\TT}$ to denote the straight-line drawing determined by an embedding $\x=\parr{x_1,...,x_n}\in \Real^{2 \times n}$. If $\parr{\x,\TT}$ is a proper intersection-free straight-line drawing we denote by $\Omega_{\parr{\x,\TT}} \subset \Real^2$ the simple polygonal domain enclosed by the boundary of the the straight-line drawing $\parr{\x,\TT}$. See Figure~\ref{fig:straight_line_drawing} for illustrations. 

\begin{figure}[t!]
  \centering
  \includegraphics[width=.9\columnwidth]{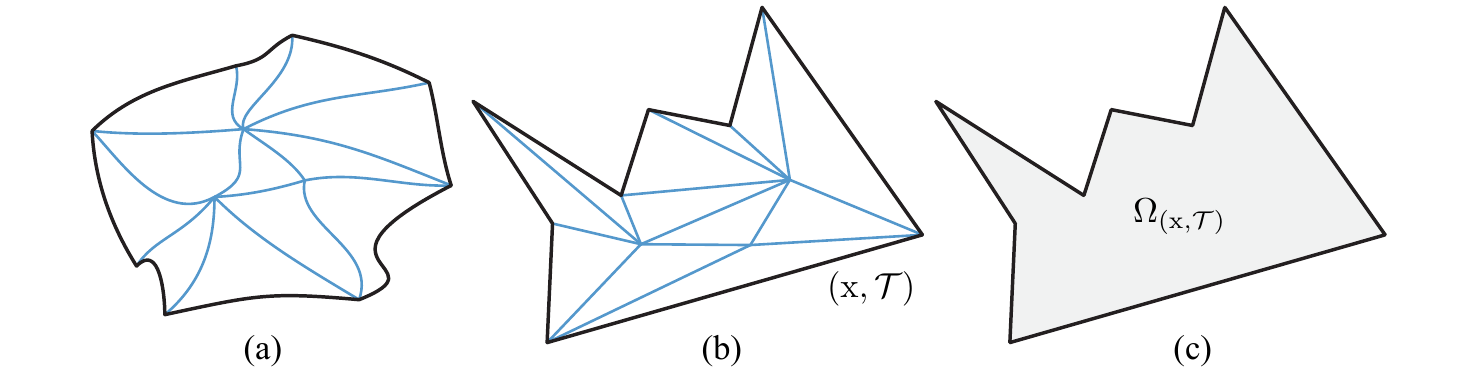}
  \caption{(a) Intersection-free drawing of a triangulation $\TT$. (b) An intersection-free straight-line drawing $\parr{\x,\TT}$ of the same triangulation. (c) The simple polygonal domain $\Omega_{\parr{\x,\TT}}$ enclosed by the boundary of the the straight-line drawing $\parr{\x,\TT}$.}
  \label{fig:straight_line_drawing}
\end{figure}

\paragraph{Discrete-harmonic embeddings.} 
Moving forward, we assume that we are given a proper intersection-free straight-line drawing $\parr{\x,\TT}$ of a 3-connected triangulation $\TT$ with disk topology and vertex coordinates $\x=\parr{x_1,...,x_n}\in \Real^{2 \times n}$. 

Let $\y=\parr{y_1,...,y_n}\in \Real^{2 \times n}$ be some assignment of vertex coordinates and let $w=\parr{w_{ij}}$ be weights associated with the directed edges $(v_i,v_j) \in E$ of $\TT$; note that we do not assume that $w_{ij}=w_{ji}$. We define the \emph{discrete Laplace} operator $\Lap_{w}(\y,\TT):V\to \Real^2$ with weights $w$ with respect to the straight-line drawing $(\y,\TT)$ by
\begin{equation} \label{eqn:discrete_laplace}
\brac{\Lap_{w}(\y,\TT)}(v_i) = \sum_{j \in \NN_\TT(v_i)} w_{ij} \parr{y_j - y_i},
\end{equation}
where $\NN_\TT(v_i)$ is the set of indices of the neighbors of $v_i\in V$ in $\TT$. Namely, $\brac{\Lap_{w}(\y,\TT)}(v_i)$ is the weighted sum of the edges incident to the $i$'th vertex in the straight-line drawing $(\y,\TT)$ of the triangulation $\TT$.

Let $\PP\subset\Real^2$ a simple polygonal domain with number of vertices corresponding to the boundary of $\TT$. We say that an assignment $\y=\parr{y_1,...,y_n}\in \Real^{2 \times n}$ of vertex coordinates is \emph{discrete-harmonic} into $\PP$ with weights $w=\parr{w_{ij}}$ if it satisfies
\begin{equation} \label{eqn:discrete_harmonic}
\Lap_{w}(\y,\TT) = 0
\end{equation}
for all interior vertices and maps the boundary vertices of $\TT$ onto the boundary of $\PP$.

For an assignment $\y=\parr{y_1,...,y_n}$ of vertex coordinates, we denote by $\phi_{\parr{\x,\TT}\to\parr{\y,\TT}}:\Omega_{\parr{\x,\TT}} \to \Real^2$ the continuous piecewise-linear map defined by linearly interpolating the vertex assignments $\phi(x_i)\mapsto y_i$ over each triangle.

\subsection{The Convex Case}
Discrete-harmonic embeddings are often discussed in the case that $\PP$ is convex, following a classic result originating in the work of Tutte \cite{tutte1963draw}:
\begin{theorem}
\label{thm:TutteNEW}
Let $\y=\parr{y_1,...,y_n}\in \Real^{2 \times n}$ be a discrete-harmonic embedding \eqref{eqn:discrete_harmonic} into a simple polygonal domain $\PP$. If $\PP$ is convex then for any positive weights $w_{ij}>0$ the straight-line drawing $\parr{\y,\TT}$ is intersection-free. Moreover, the piecewise-linear map $\phi_{\parr{\x,\TT}\to\parr{\y,\TT}}$ induced by $\y$ is a homeomorphism of $\Omega_{\parr{\x,\TT}}$ onto $\PP$.
\end{theorem}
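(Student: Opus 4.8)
The plan is to deduce Theorem~\ref{thm:TutteNEW} from the classical Tutte–Floater embedding theorem together with the observation that a discrete-harmonic embedding into a convex domain is exactly a ``convex combination map'' in the sense of Floater. First I would observe that, since the $w_{ij}$ are strictly positive, equation \eqref{eqn:discrete_harmonic} written out as in \eqref{eqn:discrete_laplace} says that for every interior vertex $v_i$, the position $y_i$ is a \emph{convex combination} of the positions of its neighbors: $y_i = \sum_{j\in\NN_\TT(v_i)} \lambda_{ij} y_j$ with $\lambda_{ij} = w_{ij}/\sum_{k} w_{ik} > 0$ and $\sum_j \lambda_{ij} = 1$. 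Meanwhile the boundary vertices of $\TT$ are mapped onto the boundary of the convex polygon $\PP$ in cyclic order (this uses that $\BMAP$ — here the prescribed boundary correspondence — is a homeomorphism of the boundary cycle onto $\partial\PP$, and that $\PP$ has matching vertex count). This is precisely the hypothesis of Tutte's theorem as extended by Floater \cite{floater1997parametrization, floater2003one} and Gortler et al.~\cite{gortler2006discrete}: a proper straight-line realization of a $3$-connected planar triangulation with disk topology in which boundary vertices lie on a convex polygon in the correct cyclic order and each interior vertex is a strictly-convex combination of its neighbors is intersection-free (a valid planar embedding).

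The key steps, in order, would be: (1) rewrite the discrete-harmonicity condition as the convex-combination property, noting the positivity of weights is what makes the combination \emph{convex}; (2) verify the remaining combinatorial hypotheses needed for Tutte/Floater — $3$-connectedness (assumed), disk topology (assumed), and that the boundary of $(\y,\TT)$ is a convex polygon traversed in the same cyclic order as the boundary of $(\x,\TT)$; (3) invoke the Tutte–Floater theorem to conclude that $(\y,\TT)$ is a proper intersection-free straight-line drawing, so in particular every triangle has strictly positive (consistently oriented) area and the triangles tile $\PP$ without overlap; (4) upgrade ``intersection-free drawing'' to ``$\phi_{(\x,\TT)\to(\y,\TT)}$ is a homeomorphism'': since both $(\x,\TT)$ and $(\y,\TT)$ are proper intersection-free straight-line drawings of the same triangulation, the piecewise-linear map sending each triangle of $\Omega_{(\x,\TT)}$ affinely onto the corresponding triangle of $(\y,\TT)$ is continuous (matching on shared edges), a bijection on each closed triangle, and the triangle-images cover $\PP$ with disjoint interiors; a standard gluing/invariance-of-domain argument then shows it is a homeomorphism of $\Omega_{(\x,\TT)}$ onto $\PP$.

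I expect the main obstacle — or at least the only genuinely nontrivial ingredient — to be step (3), the Tutte/Floater intersection-freeness result itself; but since that theorem is classical and is cited in the excerpt (and the whole point of this subsection is to recall ``the convex case'' before generalizing), I would simply cite \cite{tutte1963draw, floater1997parametrization, floater2003one, gortler2006discrete} rather than reprove it. The only care points on our side are bookkeeping: making sure the boundary correspondence really is an orientation-consistent bijection of cycles onto the convex polygon boundary (so that the hypotheses of the cited theorem are literally met), and being careful in step (4) that no triangle degenerates — which follows because a degenerate or negatively-oriented triangle in $(\y,\TT)$ would force an edge crossing, contradicting intersection-freeness. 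With those observations the proof is essentially an assembly of known facts, so I would keep it short and defer the hard analytic content to the citation.
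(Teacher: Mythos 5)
Your proposal is correct and takes essentially the same route as the paper, which offers no proof of this theorem at all: it simply attributes the intersection-freeness to Tutte \cite{tutte1963draw} (with arbitrary positive weights discussed in \cite{gortler2006discrete}) and the homeomorphism statement to Floater \cite{floater2003one}. Your convex-combination reformulation and the bookkeeping in steps (1)--(4) are precisely the standard reduction those cited works carry out, so deferring the hard content to the citations is exactly what the authors do.
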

The result proved in \cite{tutte1963draw} addresses the intersection-free realization of a more general class of planar 3-connected graphs via the solution of equations similar to \eqref{eqn:discrete_harmonic} with uniform weights. The case of arbitrary positive weights is similar, and is discussed in the context of triangular meshes in \cite{gortler2006discrete}. Floater \cite{floater2003one} shows that the discrete-harmonic piecewise-linear map $\phi_{\parr{\x,\TT}\to\parr{\y,\TT}}$ is a homeomorphism. This result is often seen as a discrete analog of the Rado-Kneser-Choquet Theorem.

\begin{figure}[b!]
  \centering
  \includegraphics[width=.9\columnwidth]{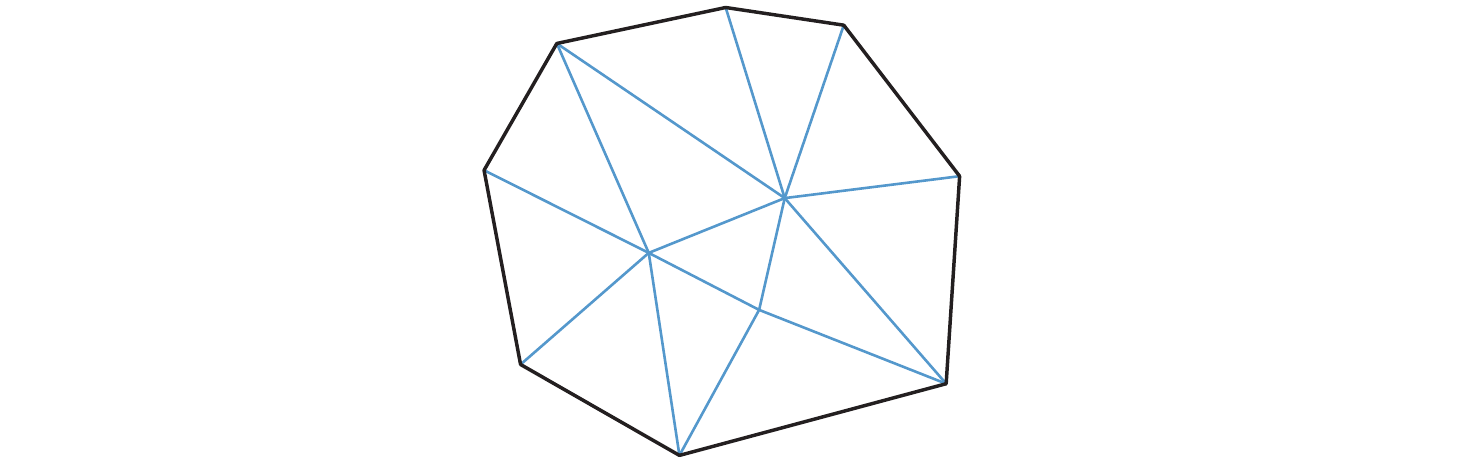}
  \caption{Tutte embedding: discrete-harmonic embedding of a 3-connected triangulation with a convex boundary. Theorem~\ref{thm:TutteNEW} guarantees that this straight-line drawing is intersection-free. }
  \label{fig:tutte_embedding_convex}
\end{figure}

\pagebreak
\subsection{The Non-Convex Case}

When $\PP$ is convex, Theorem~\ref{thm:TutteNEW} guarantees an intersection-free straight-line drawing for any choice of positive weights $w_{ij}>0$. This guarantee, however, fails to hold whenever $\PP$ is non-convex. While certain choices of weights could lead to an intersection-free drawing, other choices might result in intersections; in fact, a choice of weights that produces an intersection-free drawing might not even exist. See Figure~\ref{fig:tutte_nonconvex_fails} for an illustration. Next, we focus on the non-convex case and propose a simple geometric condition that, when satisfied, provides a similar guarantee.

\begin{figure}[t!]
  \centering
  \includegraphics[width=0.9\columnwidth]{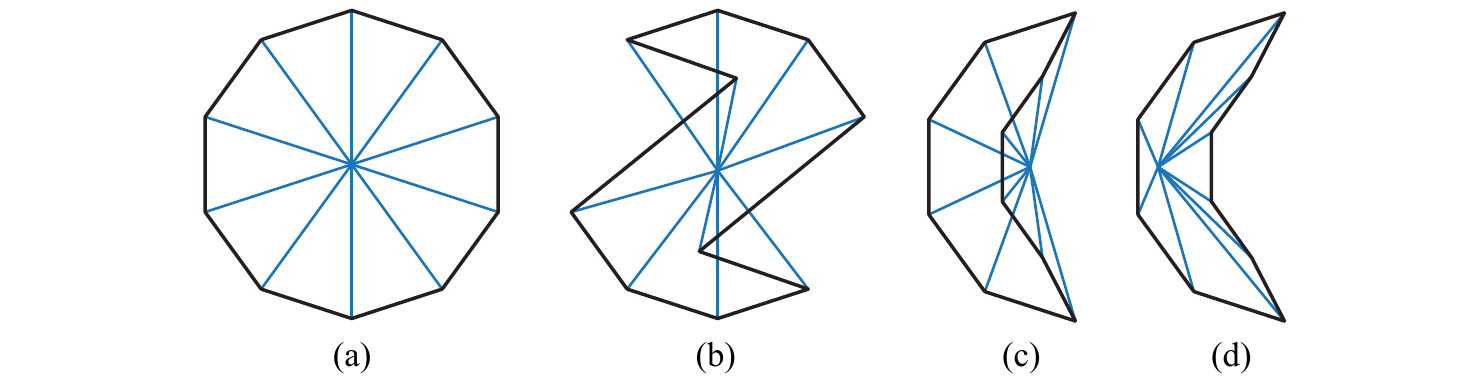}
  \caption{Discrete-harmonic embedding is intersection free if the boundary is convex (a). If the boundary is non-convex, an intersection free embedding might not exist (b). (c) and (d) are discrete-harmonic with the same boundary but different choices of positive weights $w_{ij}>0$; some choice of weights result in intersections (c) whereas others produce an intersection-free drawing (d).}
  \label{fig:tutte_nonconvex_fails}
\end{figure}

\paragraph{Boundary cones.}
We begin by associating a cone with the vertices of a simple polygon $\PP$. For a vertex $p$ of $\PP$ we define the cone $\CC_\PP$ at $p$ as the intersection of the two inward-pointing open half-planes supporting the edges incident to $p$; see Figure~\ref{fig:boundary_cone} for an illustration. 

Formally, let $p',p''$ be the vertices of $\PP$ adjacent to $p$ and suppose that the triplet $p',p,p''$ traverses the boundary of the simple polygon $\PP$ in counter-clockwise order. We define the two half-planes
$$
H^-(p) = \set{z\in \Real^2 : \ip{z^\perp, p-p'} > 0},
$$
and
$$
H^+(p) = \set{z\in \Real^2 : \ip{z^\perp, p''-p} > 0},
$$
where $z^\perp$ denotes the $\pi/2$ clockwise rotation of $z$. Then the cone $\CC_\PP$ at the vertex $p$ is given by
\begin{equation}
\CC_{\PP}(p) = H^-(p) \cap H^+(p).
\end{equation}
Figure~\ref{fig:boundary_cone} illustrates the cone $\CC_\PP$ for a \emph{convex vertex} whose internal angle is at most $\pi$, and a \emph{reflex vertex} whose internal angle is at least $\pi$. Note that this definition coincides with that of the boundary cone given in Section~\ref{sect:cont} for the continuous case; see Figure~\ref{fig:cont_cone} for an illustration.

\begin{figure}[t!]
  \centering
  \includegraphics[width=.9\columnwidth]{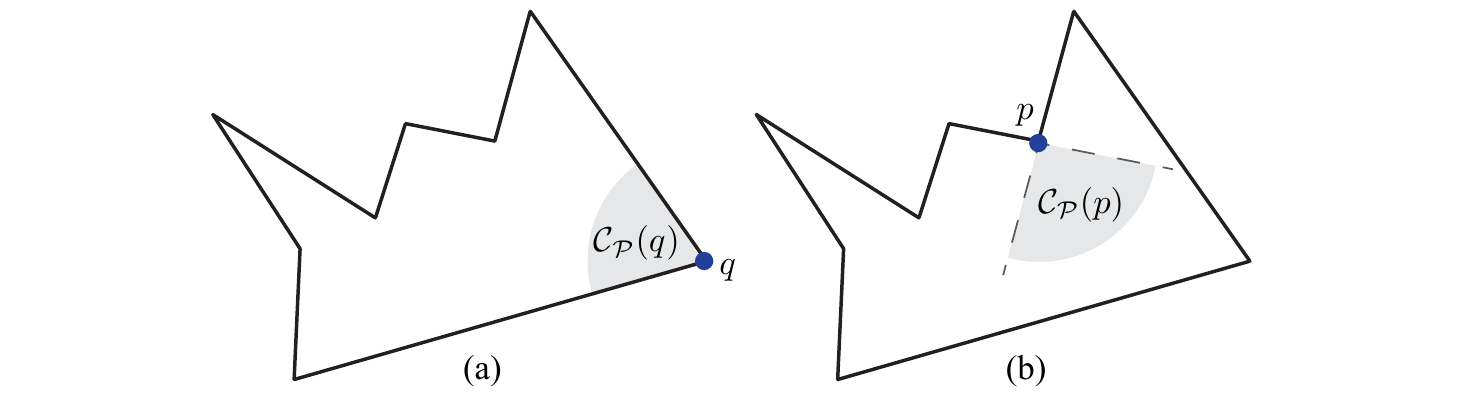}
  \caption{Illustration of the cone $\CC_{\PP}$ for (a) convex vertex $q$ and (b) reflex vertex $p$.}
  \label{fig:boundary_cone}
\end{figure}

\paragraph{Non-convex discrete-harmonic embeddings.}
Suppose $\y=\parr{y_1,...,y_n}\in \Real^{2 \times n}$ is a discrete-harmonic embedding into a simple \emph{non-convex} polygonal domain $\PP$ satisfying \eqref{eqn:discrete_harmonic} with positive weights $w=\parr{w_{ij}}$. 

In this embedding, the boundary vertices $v_i$ of $\TT$ are mapped to vertices of the polygon $\PP$. We shall use $\CC_{\PP}(y_i)$ to denote the boundary cone associated with the embedding $y_i\in\Real^2$ of the $i$-th vertex $v_i$.

We also note that for boundary vertices $\brac{\Lap_{w}(\y,\TT)}(v_i)$ does not necessarily vanish and can be interpreted as the force applied on the constrained boundary vertices by their neighbors. See Figure~\ref{fig:teaser} for example harmonic maps in which $\brac{\Lap_{w}(\y,\TT)}(v_i)$ is illustrated by red arrows along the boundary. As such, it plays a role similar to the normal derivative in the continuous case; in fact, $\brac{\Lap_{w}(\y,\TT)}(v_i)$ is proportional to the gradient of the energy corresponding to the variational form of \eqref{eqn:discrete_harmonic} when it exists.

Our main result is a discrete analog of Theorem~\ref{thm:cont_main} and establishes a connection between the boundary cones $\CC_{\PP}$ and $\Lap_{w}(\y,\TT)$ at the respective boundary vertices. We say that a boundary vertex $v_i$ satisfies the \emph{cone condition} if its embedding satisfies $\brac{\Lap_{w}(\y,\TT)}(v_i)\in \CC_{\PP}(y_i)$. See Figure~\ref{fig:cone_condition}. In turn, we show that satisfying the cone condition for the \emph{reflex boundary vertices}, \ie, vertices mapped to polygon vertices whose internal angle is greater than $\pi$, is sufficient to guarantee an intersection-free embedding.

\begin{figure}[b!]
  \centering
  \includegraphics[width=.9\columnwidth]{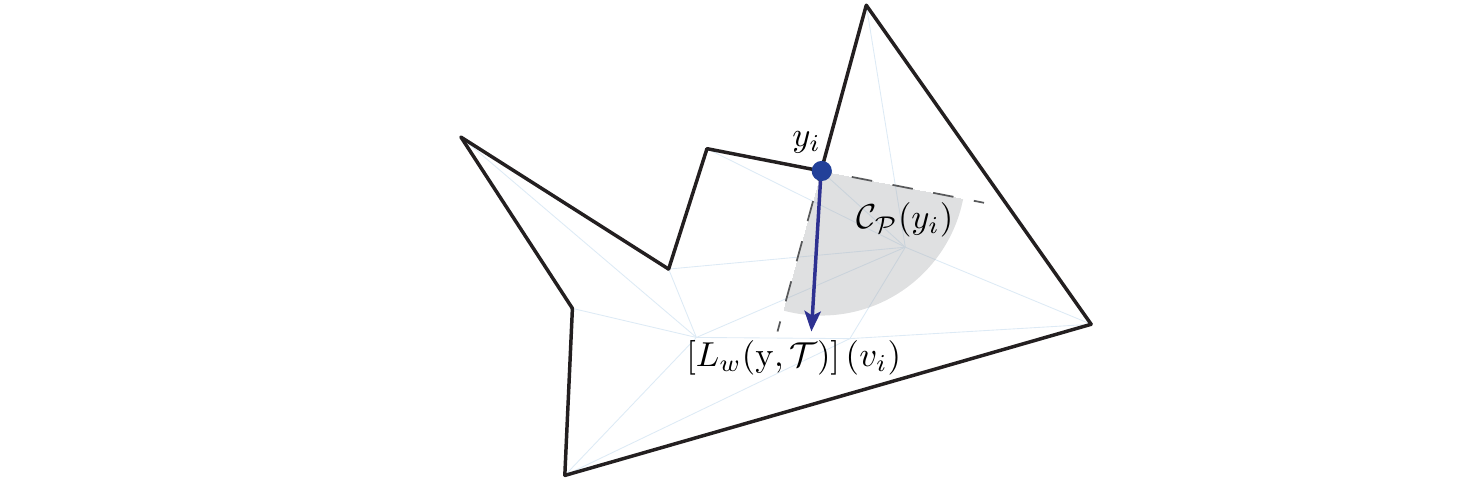}
  \caption{Illustration of the cone condition $\brac{\Lap_{w}(\y,\TT)}(v_i)\in \CC_{\PP}(y_i)$ used in Theorems \ref{thm:nonConvexTutteGRPH} and \ref{thm:nonConvexTuttePWL} to characterize discrete-harmonic embedding into non-convex domains. }
  \label{fig:cone_condition}
\end{figure}

\begin{theorem} \label{thm:nonConvexTutteGRPH}
Let $w_{ij}>0$ be positive weights and let $\y=\parr{y_1,...,y_n}\in \Real^{2 \times n}$ be a discrete-harmonic embedding into a simple polygonal domain $\PP$ satisfying \eqref{eqn:discrete_harmonic}. If the cone condition $\brac{\Lap_{w}(\y,\TT)}(v_i)\in \CC_{\PP}(y_i)$ holds for all reflex boundary vertices $v_i$ of $\TT$ then the straight-line drawing $\parr{\y,\TT}$ is intersection-free.
\end{theorem}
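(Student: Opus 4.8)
The plan is to reduce the statement to an orientation claim about the individual triangles and then to establish that claim by a sweep argument in the spirit of the classical proof of Tutte's theorem, with the cone condition taking over the role played there by convexity of the target. Since the boundary vertices of $\TT$ are sent bijectively onto the vertices of the simple polygon $\PP$ in the same cyclic order, the piecewise-linear map $\phi := \phi_{\parr{\x,\TT}\to\parr{\y,\TT}}$ restricts to a homeomorphism of $\partial\Omega_{\parr{\x,\TT}}$ onto the simple closed curve $\partial\PP$; orient $\PP$ so that this curve is traversed counter-clockwise. For a generic point $z$ not lying on the union of the images of the edges, the signed number of triangles of $\parr{\y,\TT}$ whose open image contains $z$ is the topological degree of $\phi$ at $z$, hence the winding number of $\partial\PP$ about $z$, which equals $1$ on $\operatorname{int}\PP$ and $0$ off $\overline{\PP}$. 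As the unsigned number of such triangles equals this signed number plus twice the number of \emph{negatively} oriented triangles over $z$, the drawing overlaps itself near $z$ precisely when some negatively oriented triangle covers $z$. Consequently $\parr{\y,\TT}$ is intersection-free if and only if every triangle of $\parr{\y,\TT}$ is non-degenerate and positively (counter-clockwise) oriented, and this is what remains to be shown.

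The cone condition enters only through the following lemma, to the effect that the drawing has no spurious local extrema. For $d\in\Real^2\setminus\{0\}$ call a vertex $v_i$ a \emph{$d$-maximum} if $\ip{y_j-y_i,d}\le0$ for every $v_j\in\NN_\TT(v_i)$. I claim no interior vertex and no reflex boundary vertex is a $d$-maximum, for any $d$. For an interior vertex $v_i$ discrete-harmonicity with positive weights gives $\sum_j w_{ij}\ip{y_j-y_i,d}=\ip{\brac{\Lap_{w}(\y,\TT)}(v_i),d}=0$, so every summand vanishes and all neighbours of $y_i$ lie on the line through $y_i$ orthogonal to $d$; such a degenerate ``flat'' configuration is ruled out exactly as in Tutte's proof, using $3$-connectivity of $\TT$. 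For a reflex boundary vertex $v_i$ put $p=y_i$ and let $p',p''$ be the images of its boundary neighbours with $p',p,p''$ in counter-clockwise order along $\partial\PP$; a direct check from the definitions of $H^\pm(p)$ shows that at a reflex vertex $\overline{\CC_{\PP}(p)}=\operatorname{cone}(p-p',\,p-p'')$. Were $v_i$ a $d$-maximum then in particular $\ip{p'-p,d}\le0$ and $\ip{p''-p,d}\le0$, i.e.\ $\ip{p-p',d}\ge0$ and $\ip{p-p'',d}\ge0$, so $\ip{z,d}\ge0$ for all $z\in\overline{\CC_{\PP}(p)}$, with equality only on the two bounding rays. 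But $\ip{\brac{\Lap_{w}(\y,\TT)}(v_i),d}=\sum_j w_{ij}\ip{y_j-y_i,d}\le0$ then forces $\ip{\brac{\Lap_{w}(\y,\TT)}(v_i),d}=0$, contradicting the cone condition $\brac{\Lap_{w}(\y,\TT)}(v_i)\in\CC_{\PP}(p)$, which places this vector in the \emph{open} cone. Replacing $d$ by $-d$, the same vertices are never $d$-minima either, so for every $d$ the only local extrema of $v\mapsto\ip{y_v,d}$ occur at boundary vertices mapped to convex vertices of $\PP$.

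With this lemma in hand, the proof that every triangle is non-degenerate and positively oriented follows the standard template for Tutte-type theorems \cite{tutte1963draw,floater2003one,gortler2006discrete}: using $3$-connectivity and disk topology one shows that for each line $\ell$ the subgraph induced by the vertices strictly on a given side of $\ell$ is connected or empty and that no edge of the drawing re-crosses $\ell$; from this one deduces that each triangle is non-degenerate, that neighbouring triangles lie on opposite sides of their shared edge, and hence that all triangles are coherently oriented, necessarily counter-clockwise since $\partial\PP$ is. The one point of departure from the convex case is that a linear functional need no longer be monotone along the two boundary arcs between its extrema; the no-spurious-extrema lemma is precisely the replacement for that property (components of super-level sets are born only at convex vertices of $\PP$, never at reflex ones). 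I expect this last step --- transplanting the level-set/sweep argument to the non-convex setting and checking that every appeal to convexity of $\PP$ in the classical proof can be discharged by the lemma --- to be the main obstacle. An alternative that avoids redoing it is to regard the discrete-harmonic drawing as a closed discrete one-form and invoke the embedding criterion of Gortler, Gotsman and Thurston \cite{gortler2006discrete}, after verifying that the cone condition at a reflex vertex is equivalent to their local index condition there; I would keep the winding-number reduction and the cone lemma above in any case, since they carry the geometric content.
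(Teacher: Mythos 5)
Your reduction to ``every triangle of $\parr{\y,\TT}$ is non-degenerate and positively oriented'' via the degree/winding-number count is sound, and your cone lemma is correct and well targeted: at a reflex vertex $p=y_i$ one indeed has $\overline{\CC_{\PP}(p)}=\operatorname{cone}(p-p',p-p'')$, so the cone condition forces $\ip{\brac{\Lap_{w}(\y,\TT)}(v_i),d}>0$ whenever $\ip{p-p',d}\geq 0$ and $\ip{p-p'',d}\geq 0$, ruling out a $d$-extremum there. (This is essentially the same geometric fact the paper exploits, in the dual form $-\brac{\Lap_{w}(\y,\TT)}(v_i)\in\operatorname{cone}(p'-p,p''-p)$.) The problem is that the proof stops exactly where the work begins. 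You state that the level-set/sweep argument of the convex case goes through once spurious extrema are excluded, and you flag transplanting it as ``the main obstacle'' --- but that obstacle is the theorem. The convex-case sweep relies on the fact that, for every direction $d$, the restriction of $v\mapsto\ip{y_v,d}$ to $\partial\PP$ is unimodal, so each superlevel set meets the boundary in a single arc; connectivity of superlevel sets plus this single-arc structure is what forces coherent orientation. For non-convex $\PP$ the boundary restriction has many local maxima (at convex vertices of $\PP$), superlevel sets meet $\partial\PP$ in several arcs, and knowing that new components are ``born only at convex vertices'' does not by itself recover the combinatorial control the sweep needs. This is precisely why Gortler--Gotsman--Thurston replace the sweep with a global index-counting identity; your fallback of invoking their criterion is also left unverified (``after verifying that the cone condition \dots is equivalent to their local index condition''). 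As written, neither route is completed, so the argument has a genuine gap at its core.

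For comparison, the paper closes the argument by a different and more economical device: triangulate the pockets $\PP^{\Delta}$ between $\PP$ and its convex hull $\PP'$ (adding edges but no vertices) to obtain an extended triangulation $\TT'$, and use the cone condition to manufacture positive weights on $\TT'$ under which every reflex boundary vertex of $\TT$ becomes a discrete-harmonic \emph{interior} vertex of $\TT'$ --- the key point being that $\CC_{\PP}(y_i)$ and $\brac{\NCone(\y,\TT^{\Delta})}(v_i)$ are opposite cones, so $-\brac{\Lap_{w}(\y,\TT)}(v_i)$ is a positive combination of the new edge vectors. Tutte's theorem for the convex target $\PP'$ then yields intersection-freeness of $\parr{\y,\TT'}$, hence of $\parr{\y,\TT}$. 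If you want to salvage your approach, you would either have to carry out the multi-arc sweep in full, or prove the equivalence with the Gortler--Gotsman--Thurston index condition; the convex-completion reduction avoids both.
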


The proof provided in Appendix~\ref{sect:proofNonConvexTutteGRPH} is based on reduction to the convex case: we extend the triangulation $\TT$ by adding new edges (but not new vertices) into a triangulation $\TT'$ compatible with $\PP'$, the convex hull of $\PP$; then we show how the cone condition guarantees the existence of positive weights on the edges of the extended triangulation, with which the discrete-harmonic embedding into the convex polygon $\PP'$ reproduces $\y=\parr{y_1,...,y_n}$; finally, Tutte's Theorem~\ref{thm:TutteNEW} implies that the straight-line drawing $\parr{\y,\TT'}$ of the extended triangulation $\TT'$ is intersection-free, and thus so is $\parr{\y,\TT}$. 

We may further consider the piecewise-linear map $\phi_{\parr{\x,\TT}\to\parr{\y,\TT}}:\Omega_{\parr{\x,\TT}} \to \Real^2$ induced by such an assignment $\y=\parr{y_1,...,y_n}$. As a corollary of Theorem~\ref{thm:nonConvexTutteGRPH}, we prove in Appendix~\ref{sect:proofNonConvexTuttePWL} the following characterization of piecewise-linear homeomorphisms between two straight-line drawings of the same triangulation,

\begin{theorem} \label{thm:nonConvexTuttePWL}
A piecewise-linear mapping $\phi_{\parr{\x,\TT}\to\parr{\y,\TT}}$ is a homeomorphism of $\Omega_{\parr{\x,\TT}}$ onto $\PP$ if and only if there exist positive weights $w_{ij}>0$ such that $\y$ is discrete-harmonic into $\PP$ satisfying \eqref{eqn:discrete_harmonic} and the cone condition $\brac{\Lap_{w}(\y,\TT)}(v_i)\in \CC_{\PP}(y_i)$ holds for all reflex boundary vertices $v_i$ of $\TT$.
\end{theorem}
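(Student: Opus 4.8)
The plan is to establish the two implications of the equivalence separately. For the ``if'' direction, suppose positive weights $w_{ij}>0$ are given with $\y$ discrete-harmonic into $\PP$ and the cone condition holding at every reflex boundary vertex. Then Theorem~\ref{thm:nonConvexTutteGRPH} gives that the straight-line drawing $\parr{\y,\TT}$ is intersection-free, and since the boundary cycle of $\TT$ is mapped onto $\partial\PP$ in the prescribed cyclic order, $\parr{\y,\TT}$ is in fact a \emph{proper} intersection-free straight-line drawing with $\Omega_{\parr{\y,\TT}}=\PP$. It then remains to invoke the standard fact -- already the endgame of the Tutte/Floater argument in the convex case \cite{floater2003one}, and not using convexity -- that whenever $\parr{\x,\TT}$ and $\parr{\y,\TT}$ are proper intersection-free straight-line drawings of the same $3$-connected triangulation with disk topology, the map $\phi:=\phi_{\parr{\x,\TT}\to\parr{\y,\TT}}$ is a homeomorphism of $\Omega_{\parr{\x,\TT}}$ onto $\Omega_{\parr{\y,\TT}}$. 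The reason is that each triangle of $\parr{\x,\TT}$ is sent affinely onto the corresponding non-degenerate triangle of $\parr{\y,\TT}$ with matching orientation (both drawings being proper and intersection-free, every triangle inherits the orientation of the boundary cycle), so $\phi$ is a proper local homeomorphism on all of $\Omega_{\parr{\x,\TT}}$ -- including boundary points, where the fan of incident triangles covers exactly the interior sector of the image polygon -- and a proper local homeomorphism of disks that restricts to a homeomorphism of the boundary circles is a global homeomorphism.

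For the ``only if'' direction, suppose $\phi:=\phi_{\parr{\x,\TT}\to\parr{\y,\TT}}$ is a homeomorphism of $\Omega_{\parr{\x,\TT}}$ onto $\PP$. Since $\phi$ is injective and affine on each triangle of $\parr{\x,\TT}$, every such triangle is carried onto a non-degenerate triangle and the images have pairwise disjoint interiors; hence $\parr{\y,\TT}$ is a proper intersection-free straight-line drawing with $\Omega_{\parr{\y,\TT}}=\PP$, its boundary cycle maps onto $\partial\PP$, and the boundary vertices of $\TT$ land on the vertices of $\PP$. It then suffices to exhibit positive weights $w_{ij}>0$ witnessing discrete-harmonicity and the reflex cone condition, and since the discrete Laplacian at a vertex $v_i$ only involves the weights $w_{ij}$ with $j\in\NN_\TT(v_i)$, the constraints at distinct vertices are independent and can be handled one vertex at a time. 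At an interior vertex $v_i$, properness and intersection-freeness imply that the link of $v_i$ is a simple closed polygon enclosing $y_i$ in its interior; as the interior of a simple polygon is contained in the interior of the convex hull of its vertices, $y_i\in\operatorname{int}\,\conv\,\set{y_j:j\in\NN_\TT(v_i)}$, so there are coefficients $\lambda_{ij}>0$ with $\sum_j\lambda_{ij}=1$ and $\sum_j\lambda_{ij}y_j=y_i$; taking $w_{ij}=\lambda_{ij}$ yields $\brac{\Lap_w(\y,\TT)}(v_i)=0$. At convex boundary vertices we simply set $w_{ij}=1$.

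The one step requiring real care is a reflex boundary vertex $v_i$. Because $\parr{\y,\TT}$ is a proper intersection-free drawing, the triangles incident to $v_i$ form a fan whose angles at $y_i$ sum to the reflex interior angle $\beta>\pi$ of $\PP$ at $y_i$; the vectors $\set{y_j-y_i:j\in\NN_\TT(v_i)}$ are therefore arranged in angular order, the two extreme ones pointing along the two polygon edges at $y_i$, with all consecutive angular gaps -- being angles of triangles -- strictly less than $\pi$. One then checks that the set $\set{\sum_j w_{ij}(y_j-y_i):w_{ij}>0}$ of realizable Laplacians contains the open interior wedge $W$ of $\PP$ at $y_i$ (it equals $W$ when the spanned angle is at most $\pi$, and is all of $\Real^2$ when it exceeds $\pi$, since then the neighbor directions lie in no closed half-plane). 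On the other hand, $\CC_\PP(y_i)$, being the intersection of two distinct ``inward'' open half-planes, is a nonempty open cone contained in $W$ (it coincides with $W$ only at convex vertices). Hence one may pick any $d\in\CC_\PP(y_i)$ together with positive weights $w_{ij}$ achieving $\brac{\Lap_w(\y,\TT)}(v_i)=d$, which is precisely the cone condition at $v_i$; this completes the construction of the weights and the proof.

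The main obstacle is the geometry in the ``only if'' direction -- reading off, from the local structure of the intersection-free drawing $\parr{\y,\TT}$ near a vertex, that interior vertices lie in the interior of the convex hull of their neighbors and, more delicately, that the realizable Laplacians at a reflex boundary vertex sweep out the whole interior wedge and hence meet $\CC_\PP(y_i)$ (the local picture there is non-convex, so the positive span of the neighbor directions behaves differently than in the convex case). The remaining ingredient of the ``if'' direction -- that a proper intersection-free straight-line drawing of $\TT$ induces a piecewise-linear homeomorphism -- is routine, being exactly the convexity-free part of the classical Tutte/Floater argument.
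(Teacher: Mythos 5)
Your proof is correct and follows essentially the same decomposition as the paper's: the ``if'' direction is Theorem~\ref{thm:nonConvexTutteGRPH} followed by the fact that a proper intersection-free straight-line re-drawing of a $3$-connected disk triangulation induces a piecewise-linear homeomorphism, and the ``only if'' direction builds the weights vertex by vertex from the geometry of the intersection-free drawing $\parr{\y,\TT}$. The one genuine divergence is the final step of the ``if'' direction: the paper invokes Whitney's uniqueness theorem for embeddings of $3$-connected planar graphs, whereas you run the Floater-style degree/covering argument (consistently oriented affine pieces give a proper local homeomorphism of disks restricting to a boundary homeomorphism, hence a global homeomorphism). Both are standard; yours is more self-contained, but it leans on the assertion that $\parr{\y,\TT}$ is \emph{proper} --- intersection-freeness alone does not imply properness (cf.\ Figure~\ref{fig:proper_drawings}(a')), and ruling out a non-proper configuration is exactly what Whitney's theorem is doing for the paper; to the paper's credit it is equally terse here, since properness of $\parr{\y,\TT}$ really comes out of the construction in the proof of Theorem~\ref{thm:nonConvexTutteGRPH}. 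In the ``only if'' direction you supply the proofs of what the paper delegates to the unproved Lemma~\ref{lemma:SisR2NEW}: that interior vertices lie in the interior of the convex hull of their neighbors, and that the positive span of the neighbor directions at a strictly reflex boundary vertex is all of $\Real^2$ because those directions sweep an angle greater than $\pi$ with consecutive gaps less than $\pi$ --- the same Farkas-type argument the paper uses in Lemma~\ref{lemma:boudnary_positive_det}. One edge case to tidy: a boundary vertex with interior angle exactly $\pi$ is both convex and reflex under the paper's conventions, so setting $w_{ij}=1$ there is not enough by itself; you should add (as the paper does) that the cone condition is then automatic, since $\CC_{\PP}(y_i)$ is the open inward half-plane and, by $3$-connectivity, at least one neighbor direction lies strictly inside it while the rest lie in its closure.
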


\begin{figure}[t!]
  \centering
  \includegraphics[width=.9\columnwidth]{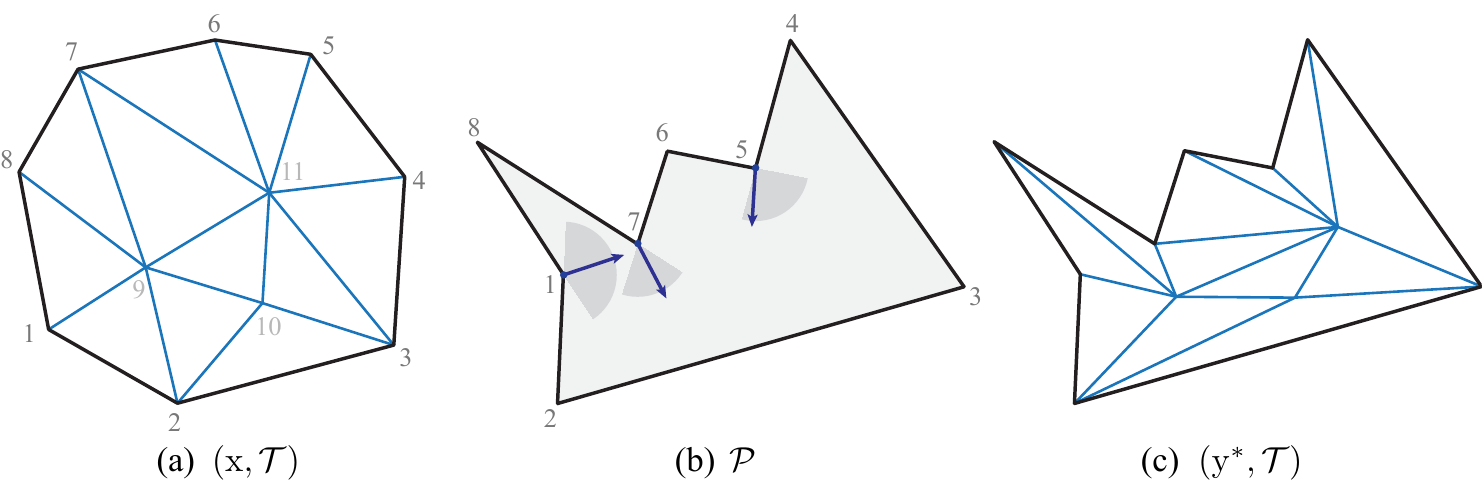}
  \caption{Illustration of Theorems \ref{thm:nonConvexTutteGRPH} and \ref{thm:nonConvexTuttePWL}: The boundary of a given triangulation $\parr{\x,\TT}$ is mapped onto a non-convex polygon $\PP$ and the cone condition is satisfied for all reflex boundary vertices (numbered 1,5 and 7 in the illustration). Theorems \ref{thm:nonConvexTutteGRPH} and \ref{thm:nonConvexTuttePWL} then imply that the straight-line drawing $\parr{y^*,\TT}$ is intersection-free and that $\parr{\x,\TT}$ and $\parr{y^*,\TT}$ are related by a piecewise-linear homeomorphism.}
  \label{fig:nonconvex_tutte}
\end{figure}

Gortler et al.~\cite{gortler2006discrete} also formulate sufficient conditions for invertibility in terms of  reflex vertices. However, their conditions differ from the cone conditions, and the statement and proof presented there follow an index counting argument which does not readily take the form of a discrete analog of the geometric condition of Theorem~\ref{thm:cont_main}. 

The characterization of Theorem~\ref{thm:nonConvexTuttePWL} is provided in terms of the cone condition at the reflex boundary vertices of $\TT$. Theorem~\ref{thm:nonConvexTutteGRPH} can be also used to derive an alternative characterization of homeomorphisms in terms of the differential of the map near the boundary. Note that $\phi_{\parr{\x,\TT}\to\parr{\y,\TT}}$ is piecewise-linear and thus $D\phi_{\parr{\x,\TT}\to\parr{\y,\TT}}$ is a constant $2\times 2$ matrix on the \emph{interior} of every triangle. 
 As a corollary, we can now prove the discrete analog of Theorem~\ref{thm:cont_AN} (Alessandrini–Nesi), due to \cite{gortler2006discrete}:
 
\begin{theorem} \label{thm:dis_AN_PWL}
Let $\y=\parr{y_1,...,y_n}\in \Real^{2 \times n}$ be a discrete-harmonic embedding into a simple polygonal domain $\PP$ satisfying \eqref{eqn:discrete_harmonic} with positive weights $w_{ij}>0$. Further assume that the boundary map is orientation preserving; namely, that the boundary polygons of $\parr{\x,\TT}$ and $\parr{\y,\TT}$ have the same orientation. Then the piecewise-linear map $\phi_{\parr{\x,\TT}\to\parr{\y,\TT}}:\Omega_{\parr{\x,\TT}} \to \Real^2$ is a homeomorphism of $\Omega_{\parr{\x,\TT}}$ onto $\PP$ if and only if 
$$\det D\phi_{\parr{\x,\TT}\to\parr{\y,\TT}}>0$$
on (the interior of) all boundary triangles; \ie, triangles $f\in F$ that have a vertex on the boundary.
\end{theorem}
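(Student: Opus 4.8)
The plan is to deduce this theorem from Theorem~\ref{thm:nonConvexTutteGRPH} and Theorem~\ref{thm:nonConvexTuttePWL} by showing that, under the orientation-preserving hypothesis, the positivity of $\det D\phi_{\parr{\x,\TT}\to\parr{\y,\TT}}$ on all boundary triangles is \emph{equivalent} to the cone condition $\brac{\Lap_{w}(\y,\TT)}(v_i)\in \CC_\PP(y_i)$ holding at every reflex boundary vertex. Once that equivalence is in hand, one direction is immediate from Theorem~\ref{thm:nonConvexTuttePWL}: if the determinant is positive on all boundary triangles, the cone condition holds at all reflex vertices, hence $\phi$ is a homeomorphism. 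For the converse, if $\phi$ is a homeomorphism then it is locally injective and orientation-consistent, so its Jacobian determinant cannot change sign; combined with the orientation-preserving boundary assumption this forces $\det D\phi > 0$ on every triangle, in particular on the boundary triangles.

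The core of the argument is therefore the local, vertex-by-vertex claim: \emph{for a reflex boundary vertex $v_i$, the cone condition at $v_i$ holds if and only if $\det D\phi > 0$ on every boundary triangle incident to $v_i$}. First I would fix a reflex boundary vertex $v_i$ with polygon-image $y_i = p$, adjacent boundary vertices mapping to $p'$ and $p''$ (traversing $\partial\PP$ counterclockwise), and the fan of triangles $f_1,\dots,f_m$ incident to $v_i$ in $\TT$, ordered so that consecutive triangles share an interior edge and $f_1,f_m$ contain the two boundary edges at $v_i$. On each $f_\ell$ the map $\phi$ is affine; writing $\det D\phi|_{f_\ell}$ in terms of the images of the two edge vectors of $f_\ell$ emanating from $v_i$ (the ratio of signed areas of the image triangle to the source triangle), positivity of $\det D\phi|_{f_\ell}$ says exactly that the image of $f_\ell$ is a nondegenerate triangle of the same orientation as the source. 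Since all source triangles in the fan share the same orientation (the drawing $\parr{\x,\TT}$ is proper and intersection-free), and since the boundary is orientation-preserving, positivity on the whole fan $f_1,\dots,f_m$ is equivalent to saying the images $y_j$, $j\in\NN_\TT(v_i)$, appear around $p$ in the correct cyclic order and all lie strictly inside the wedge at $p$ bounded by the rays $p\to p'$ and $p\to p''$; equivalently, every image edge vector $y_j - p$ lies in the cone $\CC_\PP(p)$ (using that $\CC_\PP(p)$ is exactly the intersection of the half-planes $H^-(p)$ and $H^+(p)$, and that for a reflex vertex this cone is a proper convex wedge of angle less than $\pi$). Finally, $\brac{\Lap_w(\y,\TT)}(v_i) = \sum_{j\in\NN_\TT(v_i)} w_{ij}(y_j - y_i)$ is a positive combination of the edge vectors $y_j - p$; since $\CC_\PP(p)$ is a convex cone, if every $y_j - p \in \CC_\PP(p)$ then the positive combination lies in $\CC_\PP(p)$, giving the cone condition. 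Conversely, if some $y_j - p$ fails to lie in $\CC_\PP(p)$, I would argue that the corresponding boundary triangle has nonpositive determinant — the image edge vector crosses to the wrong side of one of the supporting lines $H^\pm(p)$, which (walking outward along the fan from that boundary edge) forces an orientation reversal or degeneracy in one of the incident triangles; this is where Theorem~\ref{thm:nonConvexTutteGRPH}'s conclusion that $\parr{\y,\TT}$ is \emph{not} forced intersection-free, together with the orientation-preserving boundary hypothesis, is used to pin the sign.

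The main obstacle I anticipate is the careful bookkeeping in this last step: the cone condition on the \emph{sum} $\brac{\Lap_w(\y,\TT)}(v_i)$ is a priori weaker than the statement that \emph{every} edge vector $y_j - p$ lies in $\CC_\PP(p)$, so one cannot simply read off the per-triangle determinant signs from the cone condition alone. The resolution is that we are not proving the equivalence in isolation — we invoke Theorem~\ref{thm:nonConvexTutteGRPH}, so when the cone condition holds at \emph{all} reflex vertices, the drawing $\parr{\y,\TT}$ is globally intersection-free and proper, which forces each individual fan around $v_i$ to be embedded with the correct orientation, hence $\det D\phi|_{f_\ell} > 0$ on each incident triangle. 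So the honest logical structure is: cone condition at all reflex vertices $\Rightarrow$ (Thm~\ref{thm:nonConvexTutteGRPH}) intersection-free proper drawing $\Rightarrow$ $\phi$ is a homeomorphism (Thm~\ref{thm:nonConvexTuttePWL}) and orientation-preserving, hence $\det D\phi > 0$ everywhere, in particular on boundary triangles; and in the reverse direction, $\det D\phi > 0$ on boundary triangles together with the orientation hypothesis gives, via the local fan analysis above, each edge vector $y_j - p \in \CC_\PP(p)$ at each reflex vertex, hence the cone condition, hence (Thm~\ref{thm:nonConvexTuttePWL}) $\phi$ is a homeomorphism. The only genuinely new verification beyond citing the earlier theorems is the elementary planar-geometry lemma that, for a reflex polygon vertex $p$, positivity of the Jacobian on the incident fan is equivalent to all incident image edge-vectors lying in the wedge $\CC_\PP(p)$ — and I would isolate and prove that as a short lemma before assembling the four implications.
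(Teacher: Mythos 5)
Your overall architecture (reduce to Theorem~\ref{thm:nonConvexTuttePWL} via the cone condition at reflex vertices; forward direction from orientation-consistency of a homeomorphism) matches the paper, but the central lemma you propose is false, and this is a genuine gap. You claim that, at a reflex boundary vertex $v_i$ with image $p=y_i$, positivity of $\det D\phi_{\parr{\x,\TT}\to\parr{\y,\TT}}$ on the incident fan is equivalent to \emph{every} image edge vector $y_j-p$ lying in $\CC_\PP(p)$, and you then push the Laplacian into the cone as a positive combination. But $\CC_\PP(p)$ is an intersection of two half-planes and hence has opening angle at most $\pi$ (for a reflex vertex, angle $2\pi-\beta<\pi$ where $\beta>\pi$ is the interior angle), whereas in any correctly oriented embedding the fan of edge vectors at $p$ sweeps out the full interior angle $\beta>\pi$; in particular the two boundary edge vectors $p'-p$ and $p''-p$ lie on the bounding lines of the half-planes, not inside the open cone, and intermediate edge vectors near them lie outside it. So the hypothesis of your convex-combination step is never satisfied at a reflex vertex, and the implication ``$\det D\phi>0$ on the fan $\Rightarrow$ cone condition for the \emph{given} weights $w_{ij}$'' that you need is simply not true: one can weight an edge vector pointing outside $\CC_\PP(p)$ heavily and push $\brac{\Lap_w(\y,\TT)}(v_i)$ out of the cone while all incident determinants stay positive.

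What the paper does instead, and what your argument is missing, is two ideas. First, the correct local consequence of $\det D\phi>0$ on the fan at a reflex vertex is not that the edge vectors lie in $\CC_\PP(p)$ but that their \emph{positive span} $\brac{\NCone(\y,\TT)}(v_i)$ is all of $\Real^2$ (the paper's Lemma~\ref{lemma:boudnary_positive_det}, proved by an angle count $0<\alpha_j<\pi$, $\sum_j\alpha_j=\beta+2\pi n>\pi$ plus Farkas' lemma). Second, because $v_i$ is a pinned boundary vertex and the weights are not required to be symmetric, one may \emph{reassign} the weights $w_{ij}$ on the directed edges out of $v_i$ without changing the map $\phi_{\parr{\x,\TT}\to\parr{\y,\TT}}$ at all; since the neighbors positively span $\Real^2$, new positive weights can be chosen so that $\brac{\Lap_w(\y,\TT)}(v_i)\in\CC_\PP(y_i)$. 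Only after this reweighting does Theorem~\ref{thm:nonConvexTuttePWL} apply. Your proposal never makes the reweighting observation, and without it the equivalence you are trying to establish between the determinant condition and the cone condition for a fixed weight system does not hold.
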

A proof is provided in Appendix~\ref{sect:proofdis_AN_PWL}.





\bibliographystyle{unsrt}
\bibliography{refs}

\clearpage
\onecolumn
\appendix
\renewcommand{\thesubsection}{\Alph{section}.\arabic{subsection}}


\section{Continuous Harmonic Mappings: Proofs}

\subsection{Theorem~\ref{thm:cont_main}: supplemental details}
\label{sect:apndx_proof_cont}

Let us recall Theorem~\ref{thm:cont_main}:
\contmain*

The proof of Theorem~\ref{thm:cont_main} provided in Section~\ref{sect:cont} is complete with the exception of Lemma~\ref{lemma:cont_bnd_local_homeo}, which argues that under the conditions of Theorem~\ref{thm:cont_main}, for every point $e^{i\theta} \in \partial \DISK$, the mapping $\MAP$ is a local homeomorphism at $e^{i\theta}$. In what follows we shall prove this Lemma in a few steps:

\paragraph{A Geometric Lemma.}
To simplify notations and without loss of generality we consider the following setup: We will consider the boundary point $e^{i\theta}|_{\theta=0} \in \partial \DISK$. We further assume it is mapped to $\BND$, the boundary of the target domain, in such a way that 
\begin{equation} \label{eqn:orient_2nd_coordinate}
\frac{\partial}{\partial \nu} \MAP\parr{(1-\nu)e^{i\theta}}|_{\nu=\theta=0} = (-\lambda,0),
\end{equation}
for some $\lambda>0$. In what follows we shall focus on (the more challenging) case where the derivative at $e^{i0}$ is discontinuous; the same proof readily applies to smooth boundary points. With this setup, illustrated in Figure~\ref{fig:cone_monotinicity_reoriented}, the cone condition of \eqref{eqn:cone_cond} implies that the second coordinate of the boundary map $\BMAP$ is monotone in a neighborhood of $e^{i0}$; namely, we have the following Lemma: 
\begin{figure}[h!]
  \centering
      \includegraphics[width=.9\columnwidth]{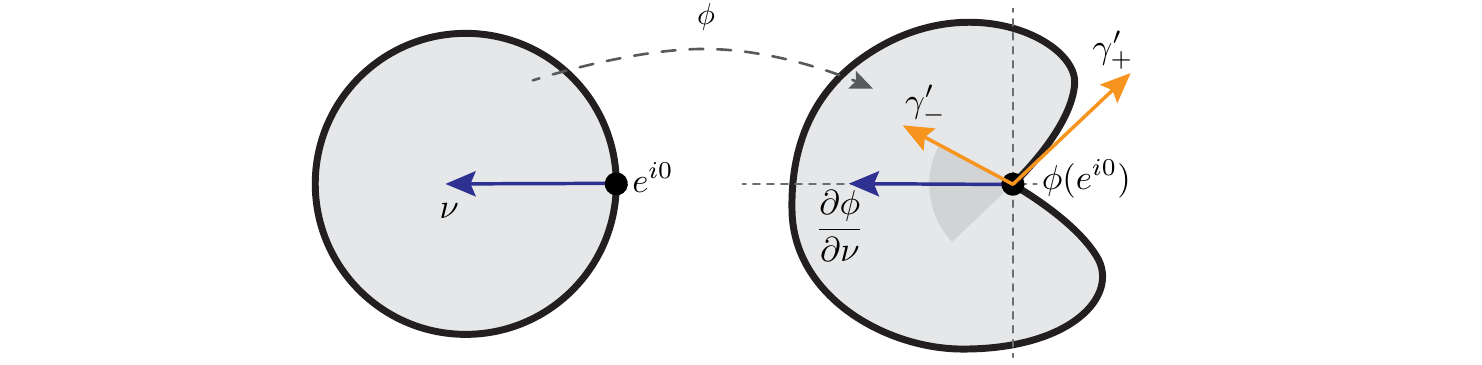}
  \caption{The geometric setup of our proof: without loss of generality, we consider the boundary point $e^{i0}$ and reorient the image such that the normal gradient $\frac{\partial}{\partial \nu}$ points in the negative $x$ direction.}
  \label{fig:cone_monotinicity_reoriented}
\end{figure}
\begin{lemma} \label{lemma:local_monotone_2nd_coordinate}
Denoting $\BMAP = \parr{\BMAP_1,\BMAP_2}$, the cone condition 
\begin{equation} \label{eqn:cone_condition_explicit}
\ip{\Bigl(\frac{\partial \MAP}{\partial \nu}\Bigr)^\perp,\BMAP'_+}>0 \mbox{ and } \ip{\Bigl(\frac{\partial \MAP}{\partial \nu}\Bigr)^\perp,\BMAP'_-}>0
\end{equation}
at $e^{i0}$ implies that for some $\delta>0$ and all $s,t \in \brac{-\delta,\delta}$ with $s<t$ we have
\begin{equation} \label{eqn:bnd_tan_monotone_2nd_coordinate}
\BMAP_2\parr{e^{it}} - \BMAP_2\parr{e^{is}} \geq c\parr{t-s},
\end{equation}
where $c > \min\set{\BMAP'_{+,2},\BMAP'_{-,2}}/2 > 0$ depends on the opening angle of boundary and the magnitudes of the one-sided tangential derivatives.
\end{lemma}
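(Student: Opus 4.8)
The plan is to reduce the statement about the second coordinate $\BMAP_2$ to the inner-product form of the cone condition, which has already been derived in the proof of Theorem~\ref{thm:cont_main} as equation~\eqref{eqn:cont_innerprod}. With the normalization~\eqref{eqn:orient_2nd_coordinate}, namely $\partial\MAP/\partial\nu|_{\nu=\theta=0} = (-\lambda,0)$ with $\lambda>0$, the clockwise rotation gives $\bigl(\partial\MAP/\partial\nu\bigr)^\perp = (0,\lambda)$ at $e^{i0}$. Hence the cone condition~\eqref{eqn:cone_condition_explicit} reads $\lambda\,\BMAP'_{+,2}>0$ and $\lambda\,\BMAP'_{-,2}>0$, i.e. both one-sided derivatives of the second coordinate, $\BMAP'_{+,2}$ and $\BMAP'_{-,2}$, are strictly positive at $\theta=0$. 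That is the key mechanism: the geometric cone condition is precisely the statement that the tangential component of the boundary map in the direction $(\partial\MAP/\partial\nu)^\perp$ is strictly increasing from both sides at the singular point.

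First I would fix $c$ with $0<c<\tfrac12\min\{\BMAP'_{+,2},\BMAP'_{-,2}\}$; this is legitimate by the previous paragraph. Next, using the hypothesis of Theorem~\ref{thm:cont_main} that $\BMAP'$ exists and is continuous on $\partial\DISK\setminus\CUSPS$, that its one-sided limits at the point of $\CUSPS$ exist and agree with $\BMAP'_\pm$, and that $\CUSPS$ is finite, there is a punctured neighborhood of $\theta=0$ on which $\BMAP_2$ is $C^1$ with $\BMAP_2'(\theta)\ge 2c$ on, say, $(0,\delta)$ (by continuity of $\BMAP'$ and the one-sided limit $\BMAP'_{+,2}>2c$, shrinking $\delta$ so that no other point of $\CUSPS$ lies in $[-\delta,\delta]$) and likewise $\BMAP_2'(\theta)\ge 2c$ on $(-\delta,0)$. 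Then for $s<t$ in $[-\delta,\delta]$, I would write
\begin{equation*}
\BMAP_2(e^{it}) - \BMAP_2(e^{is}) = \int_s^t \BMAP_2'(\tau)\,d\tau \geq 2c\,(t-s) \geq c\,(t-s),
\end{equation*}
where the integral is split at $0$ if $s<0<t$; $\BMAP_2$ is continuous everywhere and $C^1$ away from the single point $\theta=0$, so the fundamental theorem of calculus applies on each subinterval and the boundary term at $0$ cancels by continuity. This establishes~\eqref{eqn:bnd_tan_monotone_2nd_coordinate} with the stated lower bound on $c$.

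I do not expect a serious obstacle here; this lemma is essentially a bookkeeping step that unpacks the cone condition under a convenient choice of coordinates. The one point requiring mild care is the passage from the strict positivity of the \emph{one-sided derivatives at the singular point} to a \emph{uniform} positive lower bound on $\BMAP_2'$ in a full (punctured) neighborhood: this uses continuity of $\BMAP'$ on $\partial\DISK\setminus\CUSPS$ together with the assumption that the one-sided derivatives equal the corresponding one-sided limits of $\BMAP'$, plus finiteness of $\CUSPS$ to guarantee that $[-\delta,\delta]$ contains no other singular point. Once that uniform bound is in hand, the monotonicity estimate is an immediate integration. The constant $c$ is chosen strictly below $\tfrac12\min\{\BMAP'_{+,2},\BMAP'_{-,2}\}$ only so that there is room to absorb the error incurred when $\delta$ is shrunk; any value in that range works, which is why the lemma states $c>\min\{\BMAP'_{+,2},\BMAP'_{-,2}\}/2$ cannot be attained but is approached.
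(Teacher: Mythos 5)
Your proof is correct and follows essentially the same route as the paper's (one-line) argument: plug the normalization $\bigl(\partial\MAP/\partial\nu\bigr)^\perp=(0,\lambda)$ into \eqref{eqn:cone_condition_explicit} to conclude $\BMAP'_{+,2},\BMAP'_{-,2}>0$, then use one-sided continuity of $\BMAP'$ near the cusp to obtain a uniform positive lower bound on $\BMAP_2'$ in a punctured neighborhood and integrate. The only cosmetic wobble is the bookkeeping of the constant at the end: what you actually establish is the estimate with constant $2c$, which can be taken anywhere in the interval $\bigl(\min\set{\BMAP'_{+,2},\BMAP'_{-,2}}/2,\ \min\set{\BMAP'_{+,2},\BMAP'_{-,2}}\bigr)$, which is consistent with the lemma's claim that the monotonicity constant exceeds $\min\set{\BMAP'_{+,2},\BMAP'_{-,2}}/2$.
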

\begin{proof}
Follows from plugging in \eqref{eqn:orient_2nd_coordinate} into \eqref{eqn:cone_condition_explicit} and local continuity.
\end{proof}
Note that Lemma~\ref{lemma:local_monotone_2nd_coordinate} is nothing but the local monotonicity reported previously in \ref{eqn:bnd_tan_monotone}, which can be conveniently expressed in terms of the second coordinate of $\BMAP$ when the normal direction aligns with the $x-axis$ (see Figure~\ref{fig:cone_monotinicity_reoriented}).

\paragraph{An Analytic Lemma.}
\begin{lemma}
Let $f:\partial \DISK \to \Real$ with $\partial \DISK$ parameterized by the (periodized) segment $[-\pi,\pi]$. Let $F:\DISK \to \Real$ be the harmonic extension defined by 
$$
F\parr{re^{it}} = \frac{1}{2\pi} \int_0^{2\pi} 
P_r\parr{t-t'} f(t') dt',
$$
where
$$
P_r\parr{\theta} = \frac{1-r^2}{r^2 + 1 -2r\cos\parr{\theta}}
$$
is the Poisson kernel.

If $f$ is bounded, $\abs{f(t)}\leq M$ for all $t\in\brac{-\pi,\pi}$, and satisfies
$$
f\parr{t}-f\parr{s} \geq c \parr{s-t}
$$
for all $s,t\in\brac{-\delta,\delta}$, then there exists $R<1$ such that for all $r \in (R,1)$ and all $s,t\in\brac{-\delta/8,\delta/8}$ with $s<t$, the harmonic extension $F$ satisfies
$$
F\parr{re^{is}}-F\parr{re^{it}} \geq \frac{c}{2}\parr{s-t}.
$$
\end{lemma}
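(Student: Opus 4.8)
The plan is to reduce the statement to a uniform lower bound on the angular derivative of $F$ on circles near $\partial\DISK$. Since $F$ is harmonic it is $C^\infty$ on $\DISK$, so for fixed $r<1$ the map $\theta\mapsto F(re^{i\theta})$ is smooth and $F(re^{it})-F(re^{is})=\int_s^t\partial_\theta F(re^{i\theta})\,d\theta$; hence it suffices to find $R<1$ so that $\partial_\theta F(re^{i\theta})\ge c/2$ whenever $r\in(R,1)$ and $|\theta|\le\delta/8$, after which the statement follows by integrating in $\theta$. Differentiating the Poisson integral under the integral sign --- legitimate for fixed $r<1$, where $\partial_\theta P_r$ is bounded and $f$ is bounded --- and using $2\pi$-periodicity gives $\partial_\theta F(re^{i\theta})=\frac1{2\pi}\int_{-\pi}^{\pi}\partial_\theta P_r(\theta-t')\,f(t')\,dt'$, and rewriting $\partial_\theta P_r(\theta-t')=-\partial_{t'}P_r(\theta-t')$ prepares an integration by parts in $t'$.

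The crucial step is to localize at scale $\delta$. Split $\partial_\theta F(re^{i\theta})=\frac1{2\pi}\int_I(\cdots)+\frac1{2\pi}\int_O(\cdots)$ with $I=[-\tfrac\delta2,\tfrac\delta2]$ and $O=[-\pi,\pi]\setminus I$, so that the monotonicity hypothesis is available on $I$. For $|\theta|\le\delta/8$ and $t'\in O$ we have $|\theta-t'|\ge 3\delta/8$, and from $P_r(\alpha)=\frac{1-r^2}{1-2r\cos\alpha+r^2}$ and $\partial_\alpha P_r(\alpha)=-\frac{2r(1-r^2)\sin\alpha}{(1-2r\cos\alpha+r^2)^2}$ one reads off that $\mu_r:=\sup_{|\alpha|\ge 3\delta/8}P_r(\alpha)$ and $\eta_r:=\sup_{|\alpha|\ge3\delta/8}|\partial_\alpha P_r(\alpha)|$ tend to $0$ as $r\to1$; hence the outer piece is bounded by $M\eta_r$, uniformly in $\theta$. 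On $I$, the hypothesis forces $f$ to be increasing with difference quotients at least $c$, so $f|_I$ is of bounded variation and $df\ge c\,dt'$ as measures; integrating by parts over $I$ (Riemann--Stieltjes, valid since the multiplier $P_r(\theta-\cdot)$ is smooth) yields boundary terms equal to $P_r$ evaluated at arguments of size $\ge 3\delta/8$ times values of $f$ of size $\le M$, hence bounded by $2M\mu_r$, together with $\frac1{2\pi}\int_I P_r(\theta-t')\,df(t')\ge\frac c{2\pi}\int_I P_r(\theta-t')\,dt'=c\bigl(1-\tfrac1{2\pi}\int_O P_r(\theta-t')\,dt'\bigr)\ge c(1-\mu_r)$, using $P_r\ge0$ and $\frac1{2\pi}\int_{-\pi}^{\pi}P_r=1$. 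Combining the three contributions gives $\partial_\theta F(re^{i\theta})\ge c(1-\mu_r)-\tfrac{M\mu_r}{\pi}-M\eta_r$ uniformly for $|\theta|\le\delta/8$; the right-hand side converges to $c$ as $r\to1$, so choosing $R<1$ so that it exceeds $c/2$ on $(R,1)$ completes the proof.

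The main obstacle, and essentially the only nontrivial point, is the bookkeeping that keeps the argument local: integration by parts can be performed only on $I$, since outside $[-\delta,\delta]$ the boundary data $f$ is merely bounded (possibly not of bounded variation), so the estimate must exploit that the Poisson kernel at an interior point $\theta$ concentrates its mass on $t'$ at a scale $\ll\delta$ around $\theta$, which is exactly what $\mu_r,\eta_r\to0$ encodes and what forces $R$ close to $1$. A minor point, easily handled, is that an increasing $f$ may have jumps: one either appeals to the Riemann--Stieltjes integration-by-parts identity (valid because $P_r(\theta-\cdot)$ is continuous) or chooses the cut-off endpoints $\pm\delta/2$ to be continuity points of $f$. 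Everything else is routine estimation with the explicit Poisson kernel; note in passing that the constant $\delta/8$ is not sharp, the argument working equally on any $[-a\delta,a\delta]$ with $a<1/2$.
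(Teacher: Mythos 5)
Your proof is correct, but it takes a genuinely different route from the paper's. The paper estimates the difference $F(re^{is})-F(re^{it})$ directly: it decomposes the Poisson integral into a main term supported on $\abs{t'}\leq\delta/3$ (where the monotonicity hypothesis yields the contribution $c\sigma$) plus four explicit error terms $Q_1,\dots,Q_4$ arising from the tail of the kernel, the shifted integration windows, and the difference $P_r(s-t')-P_r(t-t')$, each of which is shown to be at most $c\sigma/8$ for $r$ close to $1$ by pointwise kernel estimates. You instead prove the stronger pointwise statement $\partial_\theta F(re^{i\theta})\geq c/2$ uniformly for $\abs{\theta}\leq\delta/8$ and $r$ near $1$, by a Riemann--Stieltjes integration by parts against the monotone boundary data on the inner window (so that $df\geq c\,dt'$ as measures and $\int_I P_r\,df\geq c\int_I P_r$), with the boundary terms and the outer integral absorbed into the quantities $\mu_r,\eta_r\to 0$; the claimed inequality then follows by integrating in $\theta$. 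Your version buys a cleaner bookkeeping (no shifted intervals, no four-way splitting) and a derivative bound that is slightly stronger than what is stated, at the modest cost of invoking Stieltjes integration and handling possible jumps of the monotone $f$ at the cut-off points, which you do address; the paper's version is more elementary in that it never leaves Riemann integrals of bounded functions. One remark: the signs in the lemma as printed are garbled (the hypothesis and conclusion as written would force $s>t$ while the statement says $s<t$, and the paper's own proof sets $\sigma=s-t$ and asserts $\sigma>0$); you have silently adopted the intended reading, namely that boundary monotonicity at rate $c$ propagates to monotonicity at rate $c/2$ on nearby circles, which is exactly what the application in Lemma~\ref{lemma:cont_bnd_local_homeo} requires.
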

\begin{proof}
Set $\sigma = s-t$. Note that $0<\sigma<\delta/4$. We have
\begin{align*}
    &F\parr{re^{is}}-F\parr{re^{it}} = 
    \frac{1}{2\pi} \int_{\abs{t'}\leq \pi} 
    P_r\parr{s-t'} \brac{f\parr{t'} - f\parr{t'-\sigma}} dt' \\
    & = 
    \frac{1}{2\pi} \int_{\abs{t'}\leq \delta/3} 
    P_r\parr{s-t'} \brac{f\parr{t'} - f\parr{t'-\sigma}} dt' 
    +
    \frac{1}{2\pi} \int_{\abs{t'} > \delta/3} 
    P_r\parr{s-t'} f\parr{t'} dt' \\
    &\qquad -
    \frac{1}{2\pi} \int_{[-\pi,-\delta/3-\sigma) \cup (\delta/3-\sigma,\pi]} 
    P_r\parr{t-t'} f\parr{t'} dt' \\
    & \geq
    \frac{1}{2\pi} \int_{\abs{t'}\leq \delta/3} 
    P_r\parr{s-t'} c\sigma dt' 
    +
    \frac{1}{2\pi} \int_{[-\pi,-\delta/3-\sigma) \cup (\delta/3-\sigma,\pi]}
    \brac{P_r\parr{s-t'}-P_r\parr{t-t'}} f\parr{t'} dt' \\
    &\qquad +
    \frac{1}{2\pi} \int_{[-\delta/3-\sigma,-\delta/3]}
    P_r\parr{s-t'} f\parr{t'} dt'
    -
    \frac{1}{2\pi} \int_{[\delta/3-\sigma,\delta/3]}
    P_r\parr{t-t'} f\parr{t'} dt' \\
    & =
    c\sigma - \frac{1}{2\pi} \int_{\abs{t'} > \delta/3} 
    P_r\parr{s-t'} c\sigma dt' 
    +
    \frac{1}{2\pi} \int_{[-\pi,-\delta/3-\sigma) \cup (\delta/3-\sigma,\pi]}
    \brac{P_r\parr{s-t'}-P_r\parr{t-t'}} f\parr{t'} dt' \\
    &\qquad +
    \frac{1}{2\pi} \int_{[-\delta/3-\sigma,-\delta/3]}
    P_r\parr{s-t'} f\parr{t'} dt'
    -
    \frac{1}{2\pi} \int_{[\delta/3-\sigma,\delta/3]}
    P_r\parr{t-t'} f\parr{t'} dt' \\
    & = 
    c\sigma - Q_1 + Q_2 + Q_3 - Q_4.
\end{align*}
Next, we will show that each term $Q_j$ satisfies $\abs{Q_j}\leq c\sigma/8$.

For $\abs{t'}>\delta/3$ we have $\abs{t'-s}>\delta/3-\delta/8>\delta/6$ and in turn $\cos\parr{t'-s}\leq\cos\parr{\delta/6}$. Therefore
\begin{align*}
    \abs{Q_1}
    & \leq 
    \frac{c\sigma}{2\pi} \int_{\abs{t'}>\delta/3}
    \frac{1-r^2}{1+r^2 - 2r\cos\parr{\delta/6}} dt' 
    \leq
    c\sigma \frac{(1-r)(1+r)}{(1-r)^2 + 2r(1-cos(\delta/6))}
    \leq
    c\sigma \frac{1/r-1}{1-cos(\delta/6)}.
\end{align*}
Hence, if $R>\brac{1+\frac{1-cos(\delta/6)}{8}}^{-1}$ then for any $r \in (R,1)$
\begin{align*}
    \abs{Q_1}
    & \leq 
    c\sigma (1/R-1)(1-cos(\delta/6))^{-1} \leq c\sigma/8.
\end{align*}

For the $Q_2$ term we use $\abs{f(t)}\leq M$ to get
\begin{align*}
    \abs{Q_2}
    & \leq 
    \frac{M}{2\pi} \int_{[-\pi,-\delta/3-\sigma) \cup (\delta/3-\sigma,\pi]}
    (1-r^2) \abs{
    \frac{1}{1+r^2-2r\cos(s-t')} - \frac{1}{1+r^2-2r\cos(t-t')}
    } dt' \\
    & = 
    \frac{M}{2\pi} \int_{[-\pi,-\delta/3-\sigma) \cup (\delta/3-\sigma,\pi]}
    (1-r^2) \abs{
    \frac{2r\brac{\cos(s-t') - \cos(t-t')}}{
    \brac{(1-r)^2 + 2r(1-\cos(s-t'))}
    \brac{(1-r)^2 + 2r(1-\cos(t-t'))}
    }
    } dt'.
\end{align*}
Note that
\begin{align*}
    \abs{\cos(s-t') - \cos(t-t')} = \abs{2\sin\frac{s+t-2t'}{2} \sin\frac{s-t}{2}} \leq \sigma.
\end{align*}
Also note that on the domain of integration $\abs{t'}>\delta/3$ and therefore both $\cos\parr{s-t'}$ and $\cos\parr{t-t'}$ are less than $\cos\parr{\delta/6}$. In turn, for $r \in (R,1)$ we have
\begin{align*}
    \brac{(1-r)^2 + 2r(1-\cos(s-t'))}
    \brac{(1-r)^2 + 2r(1-\cos(t-t'))} \geq 
    4R^2 \parr{1-\cos\parr{\delta/6}}^2.
\end{align*}
Hence we have
\begin{align*}
    \abs{Q_2}
    & \leq 
    \frac{M}{2\pi} \int_{[-\pi,-\delta/3-\sigma) \cup (\delta/3-\sigma,\pi]}
    (1-r^2) \frac{2r\sigma}{4R^2 \parr{1-\cos\parr{\delta/6}}^2} dt'
    \leq
    \sigma M \frac{1-R}{R}
    \frac{1}{\parr{1-\cos\parr{\delta/6}}^2} dt'.
\end{align*}
If we take $R > \brac{1+\frac{c}{8M}\parr{1-\cos\parr{\delta/6}}^2}^{-1}$ then $\frac{1-R}{R} < \frac{c}{8M}\parr{1-\cos\parr{\delta/6}}^2$, and thus
$\abs{Q_2} \leq c\sigma/8$.

Similarly,
\begin{align*}
    \abs{Q_3}
    & \leq 
    \frac{1}{2\pi} M \sigma
    \frac{1-r^2}{(1-r)^2+2r\parr{1-\cos\parr{\delta/6}}}
    \leq 
    \frac{\sigma M (1-R)}{2\pi R\parr{1-\cos\parr{\delta/6}}} \leq c\sigma/8,
\end{align*}
if $r \in (R,1)$ with $R>\brac{1 + \frac{\pi c}{4M}\parr{1-\cos\parr{\delta/6}}}^{-1}$. Applying the same argument for $Q_4$ shows that $\abs{Q_4} \leq c\sigma/8$ with the same condition on $R$.

Therefore, if we choose
$$
R > \max\set{
\brac{1+\frac{1-cos(\delta/6)}{8}}^{-1},
\brac{1+\frac{c}{8M}\parr{1-\cos\parr{\delta/6}}^2}^{-1},
\brac{1 + \frac{\pi c}{4M}\parr{1-\cos\parr{\delta/6}}}^{-1}
}
$$
then we have for all $r \in (R,1)$ and all $s,t\in\brac{-\delta/8,\delta/8}$ with $s<t$ that 
\begin{equation*}
F\parr{re^{is}}-F\parr{re^{it}} \geq \frac{1}{2} c \sigma  = \frac{1}{2} c \parr{s-t}.\qedhere
\end{equation*}
\end{proof}

\begin{lemma}
Let $f:\partial \DISK \to \Real$ be Lipschitz continuous with Lipschitz constant $L$. Then the harmonic extension $F:\DISK \to \Real$ satisfies
$$
\abs{\frac{\partial}{\partial t} F(re^{it})} \leq L.
$$
\end{lemma}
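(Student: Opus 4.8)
The plan is to deduce the bound from the maximum principle, exploiting the rotational symmetry of the disk. For $h\in\Real$ write $F_h(z):=F(e^{ih}z)$; since precomposition with a rotation preserves harmonicity, $F_h$ is the harmonic (Poisson) extension of the rotated boundary datum $t'\mapsto f(t'+h)$. Hence $G:=F_h-F$ is harmonic in $\DISK$, continuous on $\overline{\DISK}$ (both facts using only that $f$, being Lipschitz, is continuous), and on $\partial\DISK$ it equals $t'\mapsto f(t'+h)-f(t')$, whose absolute value is at most $L\abs{h}$ by the Lipschitz hypothesis.

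First I would invoke the maximum principle for harmonic functions to conclude $\abs{G(z)}\le L\abs{h}$ for every $z\in\DISK$; specializing to $z=re^{it}$ gives $\abs{F(re^{i(t+h)})-F(re^{it})}\le L\abs{h}$ for all $t$ and all $h$. Second, since $f$ is bounded, $F$ is $C^\infty$ in the open disk, so the partial derivative $\frac{\partial}{\partial t}F(re^{it})=\lim_{h\to0}\frac{F(re^{i(t+h)})-F(re^{it})}{h}$ exists pointwise; dividing the previous inequality by $\abs{h}$ and letting $h\to0$ yields $\abs{\frac{\partial}{\partial t}F(re^{it})}\le L$, as claimed. (In fact the displayed inequality already shows $t\mapsto F(re^{it})$ is globally $L$-Lipschitz, which is slightly stronger.)

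An equivalent, more computational route is to differentiate the Poisson integral in $t$, transfer the $t$-derivative onto $f$ via $\partial_t P_r(t-t')=-\partial_{t'}P_r(t-t')$ and an integration by parts over the periodic circle, obtaining $\frac{\partial}{\partial t}F(re^{it})=\frac{1}{2\pi}\int_{-\pi}^{\pi}P_r(t-t')\,f'(t')\,dt'$; then $\abs{f'}\le L$ a.e., together with $P_r\ge0$ and $\frac{1}{2\pi}\int_{-\pi}^{\pi}P_r=1$, gives the bound at once. The only point needing a word of care — in either route — is the regularity bookkeeping: for the Poisson-integral argument one notes that a Lipschitz $f$ is absolutely continuous, so $f'\in L^\infty$ exists a.e.\ and the integration by parts (equivalently, differentiation under the integral sign) is justified; for the maximum-principle argument one needs only the standard facts that the Poisson extension of a continuous function is continuous on $\overline{\DISK}$ and smooth inside. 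I do not expect any substantial obstacle here: the lemma is a soft consequence of the maximum principle, and the rotation trick sidesteps all the delicate estimates of the preceding analytic lemma.
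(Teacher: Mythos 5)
Your proof is correct, and in fact you give two valid arguments where the paper gives only the one-line remark that the bound ``follows from the definition of the Poisson kernel acting as a convolution.'' Your second, computational route is precisely what that remark is gesturing at: writing $\partial_t F(re^{it})=\frac{1}{2\pi}\int P_r(t-t')f'(t')\,dt'$ via integration by parts on the circle and then using that $P_r\geq 0$ has unit mass, so that convolution with it is a contraction on $L^\infty$ and the Lipschitz bound $\abs{f'}\leq L$ a.e.\ passes to the extension. Your first route --- comparing $F$ with its rotation $F_h(z)=F(e^{ih}z)$ and applying the maximum principle to the harmonic difference --- is a genuinely different and somewhat softer argument: it avoids differentiating the kernel and the absolute-continuity bookkeeping entirely, and it yields the slightly stronger conclusion that $t\mapsto F(re^{it})$ is globally $L$-Lipschitz for every fixed $r$, from which the derivative bound follows by smoothness of $F$ in the open disk. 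Either argument fully establishes the lemma; the rotation trick is arguably the cleaner of the two, while the convolution identity is the one the authors had in mind.
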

\begin{proof}
This follows from the definition of the Poisson kernel acting as a convolution.
\end{proof}

\paragraph{Main Lemma.}
We are now ready to prove Lemma~\ref{lemma:cont_bnd_local_homeo}:
\contbndlocalhomeo*

\begin{proof}
Without loss of generality, we consider the normalized setup described above (see Figure~\ref{fig:cone_monotinicity_reoriented}) and show that $\MAP$ is a local homeomorphism in a neighborhood of $e^{i0}$. To that end, we will establish uniform control on the normal and tangential derivatives
$$
\frac{\partial \MAP}{\partial r}\parr{re^{it}} \quad\textrm{and}\quad
\frac{\partial \MAP}{\partial t}\parr{re^{it}}
$$ 
in a neighborhood of $(r,t)=(0,1)$. By assumption we have
$$
\frac{\partial \MAP}{\partial r}\parr{re^{it}} 
= \parr{\lambda,0} + o\parr{\abs{r-1}+\abs{t}}.
$$
Let us write the tangential derivative as
$$
\parr{v_1(re^{it}),v_2(re^{it})} = \frac{\partial \MAP}{\partial t}\parr{re^{it}}.
$$
The previous Lemmas imply that, in a sufficiently small neighborhood, we have that
$$
\abs{v_1(re^{it})} \leq L
$$
and
$$
v_2(re^{it}) \geq \frac{c}{2}.
$$
This shows that
\begin{align*}
    \abs{
    \cos\parr{
    \angle \parr{
    \frac{\partial \MAP}{\partial r}\parr{re^{it}},
    \frac{\partial \MAP}{\partial t}\parr{re^{it}}
    }
    }
    } 
    & =
    \abs{
    \frac
    { \parr{\parr{\lambda,0} + o\parr{\abs{r-1}+\abs{t}}}
    \cdot \parr{v_1(re^{it}),v_2(re^{it})} }
    { \sqrt{\parr{\lambda^2 + o\parr{\abs{r-1}+\abs{t}}}
    \parr{v_1(re^{it})^2 + v_2(re^{it})^2}} }
    } \\
    & =
    \frac{\abs{\lambda v_1(re^{it}) + o\parr{\abs{r-1}+\abs{t}} }}{\sqrt{\parr{\lambda^2 + o\parr{\abs{r-1}+\abs{t}}}
    \parr{v_1(re^{it})^2 + v_2(re^{it})^2}}} \\
    & \leq
    \frac{\abs{v_1(re^{it})} + o\parr{\abs{r-1}+\abs{t}}}
    {\sqrt{v_1(re^{it})^2 + c^2/4}} \\
    & \leq
    \frac{L}{\sqrt{L^2 + c^2/4}} + o\parr{\abs{r-1}+\abs{t}} < 1
\end{align*}
on a sufficiently small neighborhood of $(r,t)=(0,1)$. This establishes a uniform bound on the angle between the normal and tangential derivatives, $\frac{\partial \MAP}{\partial r}$ and $\frac{\partial \MAP}{\partial t}$. Lastly, continuity implies that $\norm{\frac{\partial \MAP}{\partial r}}$ and $\norm{\frac{\partial \MAP}{\partial t}}$ are bounded away from zero in a small enough neighborhood of $e^{i0}$. This ensures that $\MAP$ is a local homeomorphism around $e^{i0}$.
\end{proof}

\subsection{Proof of Theorem~\ref{thm:Choquet}} 
\label{sect:alt_proof_choquet}
\begin{proof} 
We begin with a simple observation: the solution to the Dirichlet problem
\begin{equation*} 
    \begin{cases}
        \Delta \MAP = 0 \qquad &\mbox{in}~\DISK\\
        \MAP = \chi_{x>0}(x,y) \qquad &\mbox{on}~\partial\DISK.
    \end{cases}
\end{equation*}
satisfies
$$ 
\MAP(x,0) = \frac{1}{2} + \frac{2}{\pi} \arctan{x}.
$$

Let $\BND$ be a simple curve enclosing a non-convex bounded domain $\IMG$ and let $\BND_a$ and $\BND_b$ denote two points whose connecting line segment is not contained in the interior of $\BND$. We define a homeomorphism $\BMAP:\partial \DISK \to \BND$ as follows: we pick two antipodal points $x,y\in \partial \DISK$ and consider a homeomorphism $\BMAP$ that maps $\BMAP(x)=\BND_a$, $\BMAP(y)=\BND_b$ and slows down in around these points in the sense illustrated in Figure~\ref{fig:choquet_construction}. 

\begin{figure}[h!]
  \centering
      \includegraphics[width=.9\columnwidth]{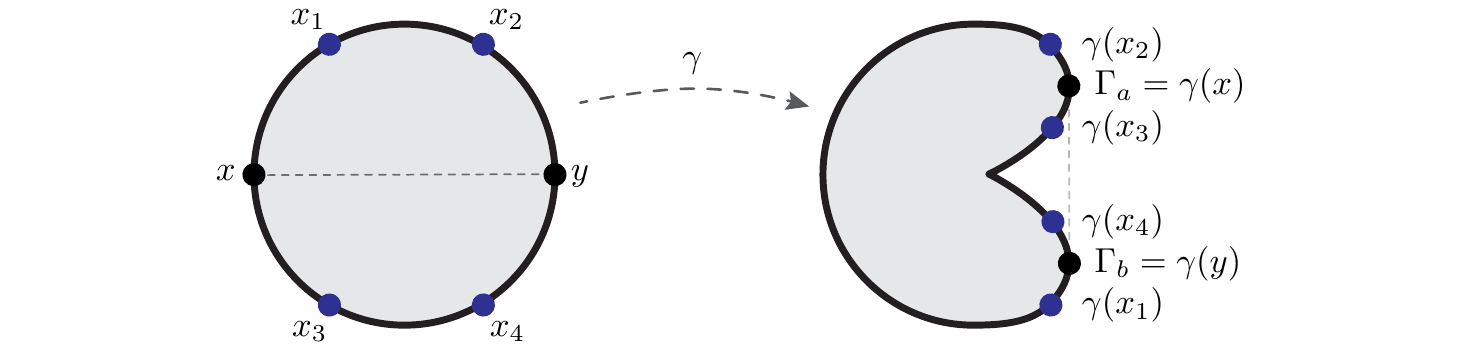}
  \caption{Implicit construction of $\BMAP$.}
  \label{fig:choquet_construction}
\end{figure}

If $x_1 \rightarrow x_2$ and $x_4 \rightarrow x_3$ in a way that preserves the symmetry of the construction, then the harmonic extension $\MAP$, solution of \eqref{eqn:cont_harmonic}, converges to 
$$ 
\MAP(z,0) \rightarrow  \parr{\frac{1}{2} + \frac{2}{\pi} \arctan{z}} \BND_a + \parr{\frac{1}{2} +-\frac{2}{\pi} \arctan{z}} \BND_b
$$ 
which converges to the straight line connecting $\BND_a$ and $\BND_b$. Thus, we obtain a homeomorphism $\BMAP$ (or rather an entire class) for which the harmonic extension $\MAP: \DISK \not\rightarrow \mbox{int}(\IMG)$.
\end{proof}


\section{Discrete-Harmonic Mappings: Proofs}

\subsection{An auxiliary lemma}
We begin with a simple geometric observation on intersection-free straight-line drawings of triangulations that will be used in the proofs.

Suppose $\parr{\y,\TT}$ is a proper intersection-free straight-line drawing  of a triangulation $\TT$. We define the cone spanned by the neighbors of a vertex $v_i$ by
$$
\brac{\NCone(\y,\TT)}(v_i) := \set{\sum_{j \in \NN_\TT(v_i)} \alpha_j \parr{y_j - y_i} \ :\ \alpha_j>0} \subseteq \Real^2.
$$

We note that in an intersection-free drawing of a triangulation, any \emph{interior} or \emph{strictly reflex} boundary vertex is strictly contained in the convex hull of its neighbors (a boundary vertex is a strictly reflex boundary vertex if its internal angle is strictly larger than $\pi$). Alternatively, this observation can be expressed in terms of the cones $\NCone(\y,\TT)$ as follows,

\begin{lemma} \label{lemma:SisR2NEW}
$\brac{\NCone(\y,\TT)}(v_i) = \Real^2$ for any interior or strictly reflex boundary vertex $v_i$ of the drawing $\parr{\y,\TT}$.
\end{lemma}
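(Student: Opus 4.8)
The plan is to prove the two equivalent formulations together: first, that in a proper intersection-free straight-line drawing $\parr{\y,\TT}$ of a triangulation, every interior vertex and every strictly reflex boundary vertex lies in the \emph{interior} of the convex hull of its neighbors; and then to observe that this geometric statement is exactly the claim $\brac{\NCone(\y,\TT)}(v_i) = \Real^2$, since the cone of nonnegative combinations $\sum_j \alpha_j(y_j - y_i)$ with $\alpha_j > 0$ equals all of $\Real^2$ precisely when $y_i$ is strictly inside the convex hull of its neighbors $\set{y_j : j \in \NN_\TT(v_i)}$ (a standard fact: a finite set of vectors positively spans $\Real^2$ iff the origin is in the interior of their convex hull).

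\textbf{Key steps.} First I would set up the local picture around $v_i$: because the drawing is proper and intersection-free, the triangles of $\TT$ incident to $v_i$ are mapped to non-overlapping triangles all sharing the vertex $y_i$, and their union is a (topological) disk neighborhood of $y_i$ whose boundary is the polygonal link of $v_i$, i.e., the cyclically ordered polygon through the neighbors $y_{j_1}, \dots, y_{j_m}$ (a path, not a closed cycle, for a boundary vertex). Second, I would argue that $y_i$ lies in the interior of the region bounded by this link: for an interior vertex the link is a closed polygon and $y_i$ is surrounded by a full $2\pi$ of incident triangle corners, so $y_i$ is not on the link and the total turning forces $y_i$ into the interior; for a strictly reflex boundary vertex the incident triangles subtend an angle strictly greater than $\pi$ at $y_i$, so the two boundary edges at $y_i$ together with the link enclose $y_i$ strictly. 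Third, since the link polygon is contained in $\conv\set{y_j : j \in \NN_\TT(v_i)}$, it follows that $y_i$ is in the interior of that convex hull. Finally, I would invoke the positive-spanning fact to conclude $\brac{\NCone(\y,\TT)}(v_i) = \Real^2$.

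\textbf{Main obstacle.} The delicate point is the strictly reflex boundary case: one must be careful that "internal angle strictly greater than $\pi$" — which is a statement about the polygonal \emph{domain} $\Omega_{\parr{\y,\TT}}$ at $y_i$ — really does translate into the incident triangle fan at $y_i$ covering more than a half-plane's worth of directions, so that $y_i$ is genuinely interior to the convex hull of its neighbors rather than merely on its boundary (which is exactly what happens at a convex boundary vertex, where the fan covers less than $\pi$ and the cone is only a half-plane or less). I expect to handle this by noting that the directions from $y_i$ to its neighbors, taken in cyclic order, span an angular sector equal to the internal angle of the domain at $y_i$; when that sector exceeds $\pi$ the origin lies strictly inside the convex hull of those direction vectors, and hence of the neighbor positions translated to $y_i$. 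The interior-vertex case is easier and is essentially the classical observation underlying Tutte's theorem. Everything else is routine planar geometry, so I would not belabor it.
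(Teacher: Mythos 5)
Your argument is correct, and it supplies a proof that the paper itself omits: the authors state Lemma~\ref{lemma:SisR2NEW} as an immediate consequence of the one-sentence observation that an interior or strictly reflex boundary vertex is strictly contained in the convex hull of its neighbors, with no further justification. Your plan is the natural elaboration of exactly that observation, and reducing to the standard positive-spanning fact is the right way to pass from the convex-hull statement to $\brac{\NCone(\y,\TT)}(v_i)=\Real^2$. One phrasing in your second step is off in the strictly reflex case: the link path together with the two boundary edges at $y_i$ bounds the union of the incident triangles, and $y_i$ is a \emph{corner} of that region rather than an interior point, so ``the boundary edges together with the link enclose $y_i$ strictly'' is not literally true. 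Your closing paragraph already contains the correct repair: the directions $y_j-y_i$, taken in cyclic order, have consecutive angular gaps each strictly less than $\pi$ (non-degenerate, consistently oriented incident triangles in an intersection-free proper drawing) and total angular span strictly greater than $\pi$, so no closed half-plane through the origin contains all of them; this is equivalent to $0$ lying in the interior of their convex hull and hence to positive spanning. That angle-counting argument is precisely what the authors do spell out later, via Farkas' lemma, in the proof of Lemma~\ref{lemma:boudnary_positive_det}; your convex-hull route and their Farkas route are two faces of the same separation argument.
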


\subsection{Proof of Theorem~\ref{thm:nonConvexTutteGRPH}} 
\label{sect:proofNonConvexTutteGRPH}
Suppose $\y^*=\parr{y^*_1,...,y^*_n}\in \Real^{2 \times n}$ is a discrete-harmonic embedding into $\PP$ with positive weights $w_{ij}>0$, and assume that the cone condition $\brac{\Lap_{w}(\y^*,\TT)}(v_i)\in \CC_{\PP}(y^*_i)$ holds for all reflex boundary vertices. We wish to show that $\parr{\y^*,\TT}$ is an intersection-free straight-line drawing, triangulating the non-convex polygonal domain $\PP$. 

Our constructive proof is based on reduction to the convex case, where we apply Tutte's Theorem~\ref{thm:TutteNEW} to obtain an intersection-free triangulation of $\PP'=\conv(\PP)$, the convex hull of $\PP$. We break the proof into two main parts: (a) We extend the triangulation $\TT$, by adding new edges but no new vertices, into a triangulation $\TT'$ compatible with $\PP'$, the convex hull of $\PP$. We use the Jordan-Sch{\"o}nflies theorem to prove that the extended triangulation is valid and satisfies the assumptions required for applying Theorem~\ref{thm:TutteNEW} on the convex polygon $\PP'$. (b) We show how to construct positive weights $w_{ij}'>0$ on the edges of of the extended triangulation $\TT'$ such that the straight-line drawing $\parr{\y^*,\TT'}$ satisfies \eqref{eqn:discrete_harmonic} for all interior vertices of $\TT'$. Theorem~\ref{thm:TutteNEW} then implies that $\parr{\y^*,\TT'}$ is an intersection-free and thus so is $\parr{\y^*,\TT}$, which concludes the proof.

\subparagraph{(a) Convex extension.}
 We first extend the triangulation via convex completion. We consider the case where $\PP$ is non-convex, otherwise the assertion is readily satisfied by Theorem~\ref{thm:TutteNEW}.  Let $\PP'$ be the convex hull of $\PP$ and let $\PP^\Delta$ denote the polygon (or collection of polygons) enclosed between $\PP'$ and $\PP$. $\PP^\Delta$ is a union of simple polygons and can therefore be triangulated by adding new edges without adding new vertices, for example, by applying the ear clipping algorithm and the Two Ears Theorem \cite{meisters1975polygons}. Note that the vertices of $\PP^\Delta$ correspond to the boundary vertices of $\TT$, and are therefore a subset of its vertices. Therefore, such a triangulation of $\PP^\Delta$ is, in fact, an intersection-free straight-line drawing $\parr{\y^*,\TT^\Delta}$, with $\TT^\Delta = (V,E^\Delta,F^\Delta)$ encoding the triangles generated for triangulating $\PP^\Delta$. Figure~\ref{fig:convex_extension} illustrates the different components and notations used above. 

\begin{figure}[h!]
  \centering
  \includegraphics[width=.9\textwidth]{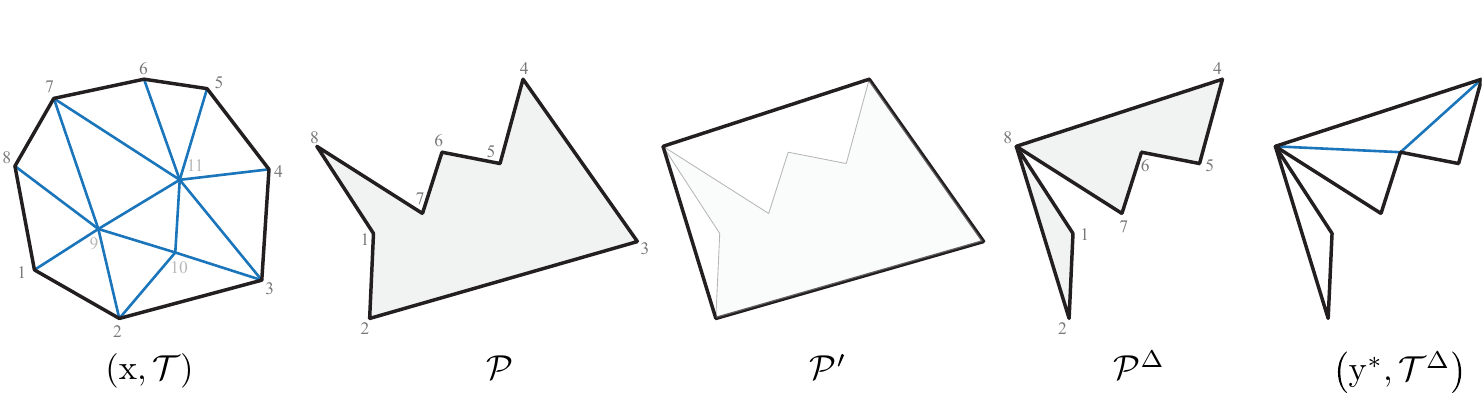}
  \caption{Extending the triangulation $\TT$ with respect to the convex hull $\PP'$ of $\PP$. }
  \label{fig:convex_extension}
\end{figure}

Note that, with these notations, $\TT$ and $\TT^\Delta$ share the same set of vertices, where in the latter we allow some vertices to remain unreferenced, \ie, some of the vertices of $\TT^\Delta$ do not belong to any face or edge. In turn, we define the extended triangulation $\TT'=(V,E',F')$ to be the union of $\TT$ and $\TT^\Delta$, obtained by combining their sets of faces $F'=F\cup F^\Delta$ and edges $E'=E\cup E^\Delta$.

To ensure we can use Theorem~\ref{thm:TutteNEW}  on the extended triangulation $\TT'$ we need to show that $\TT'$ is a 3-connected triangulation and that it has disk topology. 3-connectedness is obvious, as $\TT$ and $\TT'$ share the same set of vertices and, thought of as a graph, the degree of each vertex of $\TT'$ is at least that of $\TT$. To establish that $\TT'$ has disk topology we need to show that it admits a proper intersection-free drawing. To this end, we show that a drawing of newly introduced triangles $\TT^\Delta$ can be nicely ``stitched'' along the boundary of the intersection-free drawing $\parr{\x,\TT}$ in an intersection-free manner (though not necessarily using straight lines), as illustrated in Figure~\ref{fig:glue_extension}. 

To this end, we use the Jordan-Sch{\"o}nflies theorem \cite{thomassen1992jordan}:
\begin{theorem}[Jordan-Sch{\"o}nflies] \label{thm:jordan_schonflies} If $h$ is a homeomorphism of a simple closed curve $\gamma$ onto a simple closed curve $\gamma'$, then $h$ can be extended into a homeomorphism of the whole plane.
\end{theorem}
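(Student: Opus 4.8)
The plan is to prove this classical result by conformal uniformization of the two complementary Jordan domains, followed by a radial extension and a gluing argument, carried out on the Riemann sphere $S^2 = \Real^2 \cup \set{\infty}$. The two ingredients I would treat as black boxes are the Jordan curve theorem and Carath\'eodory's boundary-extension theorem, both of which are already invoked in the body of this paper. Concretely, by the Jordan curve theorem each of $\gamma$ and $\gamma'$ separates $S^2$ into two Jordan domains; I would construct the extension of $h$ separately on the closure of each complementary domain, arranged so that the two pieces agree along the curve, and then paste them together.

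First I would handle the bounded interior. Let $D$ and $D'$ be the bounded components of $\Real^2\setminus\gamma$ and $\Real^2\setminus\gamma'$. By the Riemann mapping theorem there are conformal homeomorphisms $\Phi:\DISK\to D$ and $\Phi':\DISK\to D'$, and since $\partial D=\gamma$ and $\partial D'=\gamma'$ are Jordan curves, Carath\'eodory's theorem extends each to a homeomorphism $\overline{\DISK}\to\overline D$ and $\overline{\DISK}\to\overline{D'}$. Transporting $h$ through these maps yields a self-homeomorphism $g=(\Phi')^{-1}\circ h\circ\Phi$ of $\partial\DISK$. I would then cone $g$ radially to a homeomorphism $G$ of $\overline{\DISK}$ by setting $G(z)=\abs{z}\,g\!\parr{z/\abs{z}}$ for $z\neq0$ and $G(0)=0$; this is a homeomorphism regardless of whether $g$ preserves or reverses orientation, so no orientation hypothesis on $h$ is needed. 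Finally $H_{\mathrm{int}}=\Phi'\circ G\circ\Phi^{-1}$ is a homeomorphism of $\overline D$ onto $\overline{D'}$ whose restriction to $\gamma$ is exactly $h$.

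Next I would repeat the argument on the unbounded components $E=S^2\setminus\overline D$ and $E'=S^2\setminus\overline{D'}$, which on the sphere are again Jordan domains. Uniformizing them with Riemann maps normalized to send $\infty\mapsto\infty$, and extending to the boundary by Carath\'eodory as before, produces a homeomorphism $H_{\mathrm{ext}}$ of $\overline E$ onto $\overline{E'}$ that fixes $\infty$ and again restricts to $h$ on $\gamma$. Since $H_{\mathrm{int}}$ and $H_{\mathrm{ext}}$ agree on the shared boundary $\gamma$, the pasting lemma gives a well-defined continuous bijection $H:S^2\to S^2$; a continuous bijection between compact Hausdorff spaces is a homeomorphism, and because $H(\infty)=\infty$ it restricts to a homeomorphism of $\Real^2$ onto itself extending $h$, as required.

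The main obstacle is not the gluing, which is routine, but the boundary behaviour of the uniformizing maps: everything hinges on Carath\'eodory's theorem, which guarantees that the Riemann map of a Jordan domain extends to a homeomorphism of closures precisely because the boundary is a simple closed curve (equivalently, locally connected). I would verify its hypotheses for both the interior and exterior domains on $S^2$, the only slightly delicate point being the normalization at $\infty$ on the unbounded side so that the final map descends from $S^2$ to $\Real^2$. A purely topological alternative, avoiding complex analysis, would approximate $\gamma$ and $\gamma'$ by polygons and extend $h$ combinatorially (as in the cited work of Thomassen); I would keep that in reserve should one wish to avoid invoking the Riemann mapping and Carath\'eodory theorems.
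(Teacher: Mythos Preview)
The paper does not actually prove the Jordan--Sch\"onflies theorem: it is stated as Theorem~\ref{thm:jordan_schonflies} with a citation to Thomassen and then used as a black box in the proof of Theorem~\ref{thm:nonConvexTutteGRPH}. So there is no in-paper argument to compare your proposal against.

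That said, your sketch is the standard complex-analytic proof and is correct in outline. The one phrasing I would tighten is the normalization on the exterior side: the Riemann maps themselves go from $\DISK$ to the exterior domains on $S^2$, so what you really want is to choose $\Psi,\Psi'$ with $\Psi(0)=\Psi'(0)=\infty$; then the radial cone $G$ fixes $0$, and the composite $H_{\mathrm{ext}}=\Psi'\circ G\circ\Psi^{-1}$ automatically fixes $\infty$, letting you descend from $S^2$ to $\Real^2$. With that adjustment the gluing goes through exactly as you describe. It is also worth noting that the reference the paper cites (Thomassen) gives precisely the purely topological, polygon-approximation proof you mention as your reserve option; that route avoids Riemann mapping and Carath\'eodory entirely, at the cost of a more combinatorial argument.
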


This theorem implies that there exists a homeomorphism of the entire plane $\psi:\Real^2\to\Real^2$ extending the correspondence between the boundary $\partial \PP$ of the polygon $\PP$ and the boundary $\partial \Omega_{\parr{\x,\TT}}$ of the drawing $\parr{\x,\TT}$. In turn, the intersection-free straight-line drawing $\parr{\y^*,\TT^\Delta}$ can be mapped via the homeomorphism $\psi$ into an intersection-free (not necessarily straight-line) drawing which we denote $\psi\parr{\y^*,\TT^\Delta}$. Note that the polygon $\Omega_{\parr{\x,\TT}}$ contains the drawing $\parr{\x,\TT}$ of $\TT$. On the other hand, the drawing $\parr{\y^*,\TT^\Delta}$ of $\TT^\Delta$ is entirely contained in the complement of $\PP$. Consequently, the Jordan-Sch{\"o}nflies theorem implies that the drawing $\psi\parr{\y^*,\TT^\Delta}$ of $\TT^\Delta$ and the drawing $\parr{\x,\TT}$ of $\TT$ do not overlap, except on $\partial \Omega_{\parr{\x,\TT}}$ where it ensures they are consistent. In turn, the triangulation $\TT'$, which is the union of the triangulations $\TT$ and $\TT^\Delta$, admits an intersection-free drawing by taking the union of the two drawings $\parr{\x,\TT}$ and $\psi\parr{\y^*,\TT^\Delta}$. See Figure~\ref{fig:glue_extension} for an illustration.

\begin{figure}[t!]
  \centering
  \includegraphics[width=.9\textwidth]{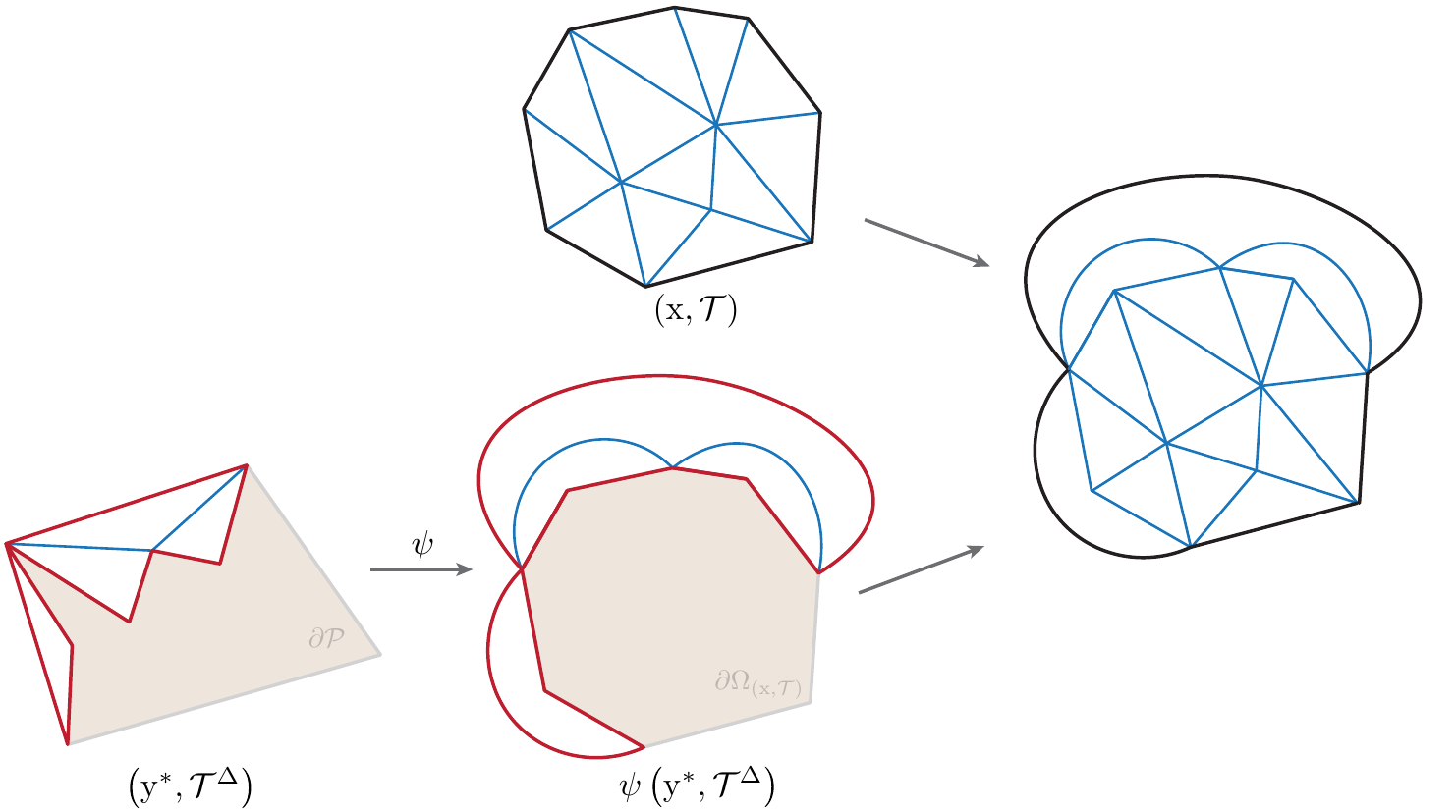}
  \caption{Using the Jordan-Sch{\"o}nflies theorem to ``stitch'' the newly introduced triangles $\TT^\Delta$ along the boundary of the intersection-free drawing $\parr{\x,\TT}$ in an intersection-free manner}
  \label{fig:glue_extension}
\end{figure}

\subparagraph{(b) Discrete-harmonic embedding of the extension.} We now focus on the extended triangulation $\TT'$ and the straight-line drawing $\parr{\y^*,\TT'}$ with vertex coordinates $\y^*\in \Real^{2 \times n}$. We will show that the straight-line drawing $\parr{\y^*,\TT'}$ is an intersection-free triangulation of the convex polygon $\PP'$. Consequently, since $\TT\subset\TT'$, this will show that $\parr{\y^*,\TT}$ is intersection-free as well, thus concluding the proof.

To show that $\parr{\y^*,\TT'}$ is intersection free, we construct positive weights $w_{ij}'>0$ on the edges of the extended triangulation $\TT'$ such that the vertex coordinates $\y^*$ satisfy \eqref{eqn:discrete_harmonic} for all interior vertices of $\TT'$. In turn, since $\y^*$ maps the boundary of $\TT'$ to the boundary of a convex polygonal domain $\PP'$, Theorem~\ref{thm:TutteNEW} shows that the straight-line drawing $\parr{\y^*,\TT'}$ is intersection-free.

To construct weights $w'=\parr{w_{ij}'}$ on the edges of the extended triangulation $\TT'$ we independently consider each vertex $v_i\in V$ and choose weights on its incident (directed) edges in $\TT'$ such that 
$$\brac{\Lap_{w}(\y^*,\TT')}(v_i) = \sum_{j \in \NN_{\TT'}(v_i)} w_{ij} \parr{y^*_j - y^*_i}=0.$$ 

\pagebreak
There are several cases to consider, see Figure~\ref{fig:proof_cases} for an illustration:

\begin{enumerate}
\item 
\emph{$v_i$ is a boundary vertex of the extension $\TT'$:} Its coordinate $y^*_i$ is fixed to a vertex of $\PP'$ regardless of the choice of weights $w_{ij}'$ and \eqref{eqn:discrete_harmonic} need not be satisfied. 

\item 
\emph{$v_i$ is interior to the original triangulation $\TT$:} In this case we copy the original weights and set $w'_{ij}=w_{ij}>0$ for all $j\in \NN_\TT(v_i)$. Since $v_i$ has the same neighborhood in both $\TT$ and its extension $\TT'$, we have $\brac{\Lap_{w}(\y^{*},\TT')}(v_i)=0$.

\item
\emph{$v_i$ is interior to $\TT'$ but belongs to the boundary of $\TT$:} As such, $v_i$ is mapped in the drawing $\parr{\y^*,\TT}$ to either a strictly convex or a reflex vertex of the polygon $\PP$:
\begin{enumerate}
\item 
If $v_i$ is mapped to a strictly convex vertex of $\PP$ (whose internal angle is strictly smaller than $\pi$), it is a strictly reflex vertex of the complementary polygon $\PP^\Delta$ with respect to the convex hull. Applying Lemma~\ref{lemma:SisR2NEW} to the triangulation $\parr{\y^*,\TT^\Delta}$ of $\PP^\Delta$ implies that $\brac{\NCone(\y^*,\TT^\Delta)}(v_i) = \Real^2$ with respect to the neighbors of $v_i$ in $\TT^\Delta$. In turn, since $\NN_{\TT^\Delta}(v_i)\subseteq\NN_{\TT'}(v_i)$ we have that $\brac{\NCone(\y^*,\TT')}(v_i) = \Real^2$ in the extended triangulation $\TT'$ and hence there exist positive weights $w_{ij}'>0$ such that $\brac{\Lap_{w}(\y^{*},\TT')}(v_i)=0$.

\item
Lastly, consider the case where $v_i$ is mapped to a reflex vertex of $\PP$. In this case, by the assumptions made in the theorem, we know that $v_i$ satisfies the cone condition $\brac{\Lap_{w}(\y^*,\TT)}(v_i)\in \CC_{\PP}(y^*_i)$. Since $v_i$ is a reflex vertex of $\PP$, it is a convex vertex with respect to the triangulation $\parr{\y^*,\TT^\Delta}$ of the complementary polygon $\PP^\Delta$. In this case, we note that $\brac{\NCone(\y^*,\TT^\Delta)}(v_i)$ and $\CC_{\PP}(y^*_i)$ are opposite cones, see Figure~\ref{fig:proof_cases}. Therefore $-\brac{\Lap_{w}(\y^*,\TT)}(v_i) \in \brac{\NCone(\y^*,\TT^\Delta)}(v_i)$; namely, there exist positive weights $w_{ij}^\Delta>0$ on the edges of $\TT^\Delta$ such that
$$
\sum_{j \in \NN_{\TT^\Delta}(v_i)} w_{ij}^\Delta \parr{y^*_j - y^*_i} = -\brac{\Lap_{w}(\y^*,\TT)}(v_i) = - \sum_{j \in \NN_\TT(v_i)} w_{ij} \parr{y^*_j - y^*_i},
$$
where the second equality is simply the definition of $\Lap_{w}(\y^*,\TT)$. We then choose $w_{ij}'$ to be either $w_{ij}$, $w_{ij}^\Delta$ or their sum (for the boundary edges of $\TT$ incident to $v_i$), which satisfy $\brac{\Lap_{w}(\y^{*},\TT')}(v_i)=0$. \end{enumerate}
\end{enumerate}

\begin{figure}[t!]
  \centering
  \includegraphics[width=.9\textwidth]{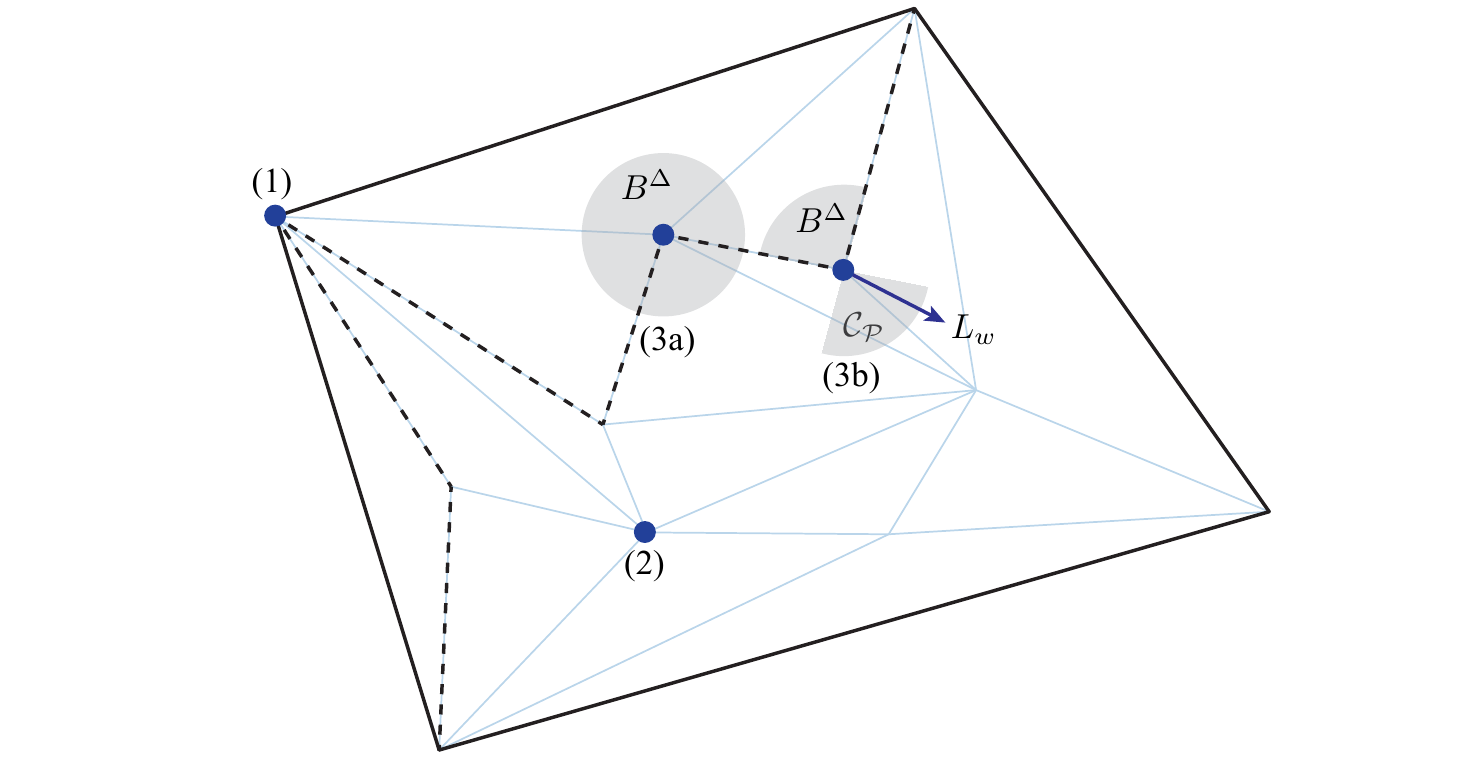}
  \caption{The different cases considered in the proof of Theorem~\ref{thm:nonConvexTutteGRPH} and key objects discussed in each case.}
  \label{fig:proof_cases}
\end{figure}

With weights chosen as described above, $\brac{\Lap_{w}(\y^{*},\TT')}(v_i)=0$ for all interior vertices $v_i$ of $\TT'$.  Theorem~\ref{thm:TutteNEW} then implies that the straight-line drawing $\parr{\y^*,\TT'}$ is intersection-free and thus so is $\parr{\y^*,\TT}$, which concludes the proof.

\subsection{Proof of Theorem~\ref{thm:nonConvexTuttePWL}}
\label{sect:proofNonConvexTuttePWL}

Suppose $\phi_{\parr{\x,\TT}\to\parr{\y,\TT}}$ is a piecewise-linear homeomorphism of $\Omega_{\parr{\x,\TT}}$ onto $\PP$. We want to show the existence of positive weights $w_{ij}>0$ such that $\y$ satisfies \eqref{eqn:discrete_harmonic} and the cone condition $\brac{\Lap_{w}(\y,\TT)}(v_i)\in \CC_{\PP}(y_i)$ holds for all reflex boundary vertices $v_i$ of $\TT$. Since $\phi_{\parr{\x,\TT}\to\parr{\y,\TT}}$ is a homeomorphism, the straight-line drawing $\parr{\y,\TT}$ is intersection-free, being the image of the intersection-free drawing $\parr{\x,\TT}$. Lemma~\ref{lemma:SisR2NEW} then implies that $\brac{\NCone(\y,\TT)}(v_i) = \Real^2$ for all interior or strictly reflex boundary vertices. Thus, for each interior vertex $v_i$, we have $0\in \brac{\NCone(\y,\TT)}(v_i)$ that implies the existence of positive weights $w_{ij}>0$ that satisfy \eqref{eqn:discrete_harmonic}. Similarly, there exist positive weights that satisfy the cone condition for all strictly reflex boundary vertices. Lastly, for boundary vertices that are both reflex and convex the associated boundary cone is a open half-space, and thus the cone condition is satisfied for any positive weights.

For the other direction, we use a result by Whitney \cite{whitney1932congruent} regarding the uniqueness of embedding of 3-connected graphs. Whitney's result implies that once the outer face is chosen, a 3-connected graph has a unique embedding homeomorphism of the plane \cite{dolev1983planar}.

Suppose $\y$ is discrete-harmonic into $\PP$ satisfying \eqref{eqn:discrete_harmonic} and the cone condition $\brac{\Lap_{w}(\y,\TT)}(v_i)\in \CC_{\PP}(y_i)$ holds for all reflex boundary vertices $v_i$ of $\TT$. Consequently, Theorem~\ref{thm:nonConvexTutteGRPH} asserts that the straight-line drawing $\parr{\y,\TT}$ is intersection-free. Note that $\parr{\x,\TT}$ and $\parr{\y,\TT}$ are both proper drawings of the triangulation $\TT$, that is, their external face corresponds to the boundary edges of $\TT$. Therefore, Whitney's result implies that the piecewise-linear map $\phi_{\parr{\x,\TT}\to\parr{\y,\TT}}$ that maps the drawing $\parr{\x,\TT}$ onto $\parr{\y,\TT}$ is a homeomorphism.

\subsection{Proof of Theorem~\ref{thm:dis_AN_PWL}}
\label{sect:proofdis_AN_PWL}

If $\phi_{\parr{\x,\TT}\to\parr{\y,\TT}}:\Omega_{\parr{\x,\TT}} \to \Real^2$ is a homeomorphism then clearly $\det D\phi_{\parr{\x,\TT}\to\parr{\y,\TT}}>0$ on the interior of all triangles. 

For the converse, suppose that $\y=\parr{y_1,...,y_n}\in \Real^{2 \times n}$ is a discrete-harmonic embedding into a simple polygonal domain $\PP$ satisfying \eqref{eqn:discrete_harmonic} with positive weights $w_{ij}>0$ and that $\det D\phi_{\parr{\x,\TT}\to\parr{\y,\TT}}>0$ on the interior of all boundary triangles. We will use Theorem~\ref{thm:nonConvexTuttePWL} to conclude that $\phi_{\parr{\x,\TT}\to\parr{\y,\TT}}$ is a homeomorphism. 

The proof is immediate if reflex boundary vertices satisfy the cone condition; if the cone condition $\brac{\Lap_{w}(\y,\TT)}(v_i)\in \CC_{\PP}(y_i)$ holds for all reflex boundary vertices $v_i$ of $\TT$ then Theorem~\ref{thm:nonConvexTuttePWL} implies that $\phi_{\parr{\x,\TT}\to\parr{\y,\TT}}$ is a homeomorphism of $\Omega_{\parr{\x,\TT}}$ onto $\PP$. Otherwise, we separately address each reflex boundary vertex that does not satisfy the cone condition: we show that the weights of its incident edges may be modified, with no change to the map itself, so that the cone condition is satisfied; then, Theorem~\ref{thm:nonConvexTuttePWL} could be used to conclude the proof.

Towards this end, we have the following lemma:
\begin{lemma} \label{lemma:boudnary_positive_det}
Let $v_i$ be a reflex boundary vertex, that is, a boundary vertex of $\TT$ mapped to a reflex vertex of $\PP$. If $\det D\phi_{\parr{\x,\TT}\to\parr{\y,\TT}}>0$ on the interior of all triangles incident to $v_i$, then the cone  
$$
\brac{\NCone(\y,\TT)}(v_i) := \set{\sum_{j \in \NN_\TT(v_i)} \alpha_j \parr{y_j - y_i} \ :\ \alpha_j>0}
$$
satisfies $\brac{\NCone(\y,\TT)}(v_i) = \Real^2$.
\end{lemma}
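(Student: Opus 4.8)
The plan is to reduce the claim to an elementary statement about the fan of direction vectors emanating from $v_i$. Since $(\x,\TT)$ is a proper intersection-free straight-line drawing, the triangles incident to the boundary vertex $v_i$ form a fan $T_1,\dots,T_k$: writing $w_0,w_1,\dots,w_k$ for the neighbours of $v_i$ listed in counter-clockwise order around $v_i$, with $w_0$ and $w_k$ the two boundary neighbours, we have $T_\ell=\set{v_i,w_{\ell-1},w_\ell}$, and after orienting $\partial\Omega_{\parr{\x,\TT}}$ counter-clockwise every triangle of $(\x,\TT)$ inherits the positive orientation, so that $(v_i,w_{\ell-1},w_\ell)$ is positively oriented for each $\ell$. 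Put $d_\ell:=y_{w_\ell}-y_i\in\Real^2$. Because $\det D\phi_{\parr{\x,\TT}\to\parr{\y,\TT}}>0$ on the interior of every $T_\ell$, the image triple $(y_i,y_{w_{\ell-1}},y_{w_\ell})$ is again positively oriented; hence $d_\ell$ is obtained from $d_{\ell-1}$ by a counter-clockwise rotation through an angle $\alpha_\ell\in(0,\pi)$. Writing $R_\alpha$ for the counter-clockwise rotation by $\alpha$ and $\Theta:=\alpha_1+\dots+\alpha_k$, this gives $d_\ell=R_{\alpha_1+\dots+\alpha_\ell}d_0$ and in particular $d_k=R_\Theta d_0$.

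The crux is to show $\Theta>\pi$. The vectors $d_0$ and $d_k$ point along the two edges of $\PP$ incident to $y_i$, since the boundary edges of $\TT$ at $v_i$ are $(v_i,w_0)$ and $(v_i,w_k)$ and these are mapped onto polygon edges. Because $\det D\phi_{\parr{\x,\TT}\to\parr{\y,\TT}}>0$ on $T_1$ and on $T_k$ — the unique triangles containing these boundary edges — the map is orientation preserving across each of these two edges, so it carries the interior side of $\partial\Omega_{\parr{\x,\TT}}$ near $v_i$ to the interior side of $\partial\PP$ near $y_i$; consequently $w_0$ is the forward boundary neighbour of $y_i$ and $w_k$ the backward one with respect to the counter-clockwise orientation of $\partial\PP$. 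With this identification, the counter-clockwise sweep from $d_0$ to $d_k$ through the interior of $\PP$ is exactly the interior angle $\beta$ of $\PP$ at $y_i$, i.e. $R_\beta d_0=d_k$. Since $y_i$ is a reflex vertex, $\beta\in(\pi,2\pi)$. Combining $R_\Theta d_0=R_\beta d_0$ gives $\Theta\equiv\beta\pmod{2\pi}$, and since $\Theta=\alpha_1+\dots+\alpha_k>0$ while the smallest positive representative of $\beta$ modulo $2\pi$ is $\beta$ itself, we conclude $\Theta\ge\beta>\pi$.

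Finally I would deduce $\brac{\NCone(\y,\TT)}(v_i)=\Real^2$ from $\Theta>\pi$ by a gap argument. A finite family of nonzero vectors has positive hull (with strictly positive coefficients, as in the definition of $\NCone$) equal to $\Real^2$ exactly when the origin lies in the interior of its convex hull, equivalently when, viewed as points on the circle of directions, all cyclically consecutive gaps are strictly smaller than $\pi$. If $\Theta<2\pi$, the arguments of $d_0,\dots,d_k$ are $0<\alpha_1<\alpha_1+\alpha_2<\dots<\Theta$ inside $[0,2\pi)$; the $k$ consecutive gaps are the $\alpha_\ell\in(0,\pi)$, and the single remaining wrap-around gap equals $2\pi-\Theta$, which lies in $(0,\pi)$ precisely because $\pi<\Theta<2\pi$. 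If $\Theta\ge2\pi$, one first restricts to the largest index $m$ with $\alpha_1+\dots+\alpha_m<2\pi$ (then $\alpha_1+\dots+\alpha_m>\pi$, since $\alpha_{m+1}<\pi$) and runs the same argument for $d_0,\dots,d_m$. In either case no closed half-plane contains all the $d_\ell$, so the origin lies in the interior of their convex hull, which is exactly the assertion $\brac{\NCone(\y,\TT)}(v_i)=\Real^2$. The genuinely delicate point is the orientation bookkeeping in the middle step — pinning down that the counter-clockwise interior sweep at the reflex vertex $y_i$ coincides with the interior angle $\beta>\pi$, using only the sign of the determinant on the two boundary triangles at $v_i$; the rest is routine planar trigonometry.
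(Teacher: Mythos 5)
Your proof is correct and follows essentially the same route as the paper's: both arguments show that the positive determinants on the triangles incident to $v_i$ force the image edge vectors $y_j-y_i$ to turn by angles in $(0,\pi)$ whose total is congruent mod $2\pi$ to the reflex interior angle $\beta>\pi$, and then conclude that a family of vectors whose cyclic direction gaps are all strictly less than $\pi$ positively spans $\Real^2$. The only difference is in the packaging of the last step --- the paper derives a contradiction via Farkas' lemma (a functional $c$ with $c^T Y_j\ge 0$ for all $j$ would have to change sign), while you use the equivalent gap characterization directly --- and your explicit treatment of the case $\Theta\ge 2\pi$ and of the orientation bookkeeping at the two boundary triangles is, if anything, slightly more careful than the paper's, though note that identifying the counter-clockwise sweep from $d_0$ to $d_k$ with the interior angle of $\PP$ still tacitly uses the orientation-preservation of the boundary map assumed in Theorem~\ref{thm:dis_AN_PWL}.
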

\begin{proof}
Suppose, without loss of generality, that $v_1,\ldots,v_K$ are the vertices adjacent to $v_i$, ordered in clockwise order with respect to the intersection-free straight-line drawing $\parr{\x,\TT}$. Denote by $Y_j = y_j - y_i$ the image of the edges incident to $v_i$ and let $\alpha_j$ be the angle from $Y_j$ to $Y_{j+1}$ in the clockwise direction. Let $\beta$ denote the clockwise angle between $Y_1$ and $Y_K$, which correspond to the boundary edges incident to $v_i$, and note that since $v_i$ is a reflex vertex $\pi<\beta<2\pi$. These notations are illustrated in Figure~\ref{fig:det_proof_notations}.

\begin{figure}[t!]
  \centering
  \includegraphics[width=.9\textwidth]{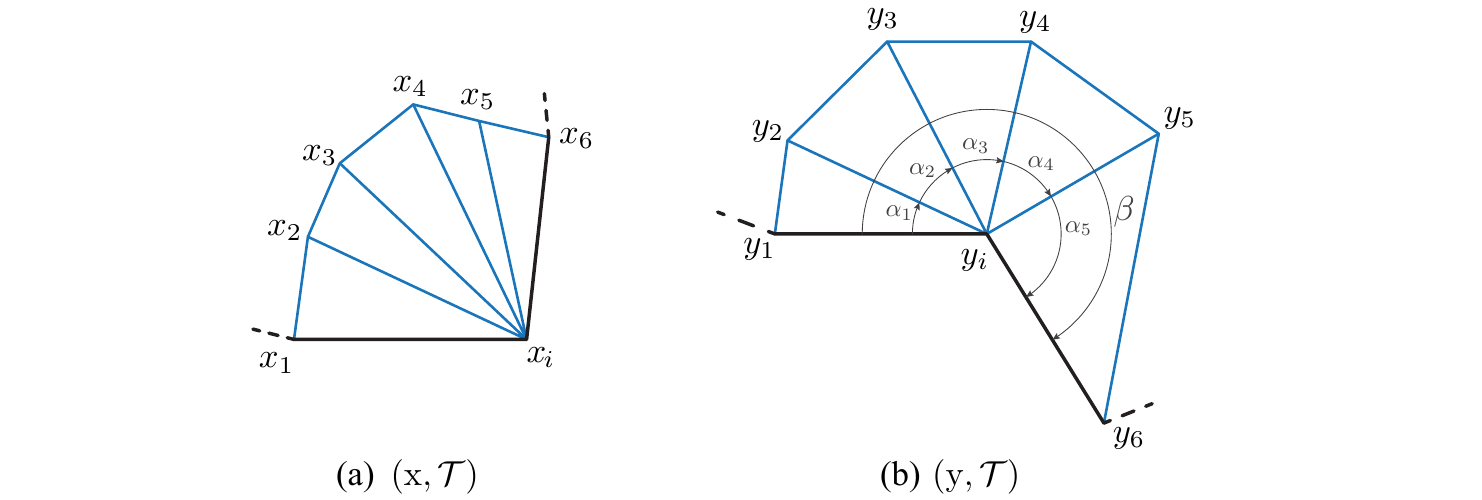}
  \caption{The notations used in the proof of Lemma~\ref{lemma:boudnary_positive_det}. (a) shows the neighborhood of the boundary vertex $v_i$ as realized in the intersection-free straight-line drawing $\parr{\x,\TT}$. (b) shows in image of $\phi_{\parr{\x,\TT}\to\parr{\y,\TT}}$ wherein $v_i$ is realized as a reflex boundary vertex.}
  \label{fig:det_proof_notations}
\end{figure}

Since $\phi_{\parr{\x,\TT}\to\parr{\y,\TT}}$ is continuous and orientation preserving on the $K-1$ triangles incident to $v_i$, we have that $0<\alpha_1,\ldots,\alpha_{K-1}<\pi$ and $\sum_{j=1}^{K-1} \alpha_j = \beta + 2\pi n$, $n\in \Natural$. Figure~\ref{fig:det_proof_angle_condition} visualizes these conditions.

\begin{figure}[t!]
  \centering
  \includegraphics[width=.9\textwidth]{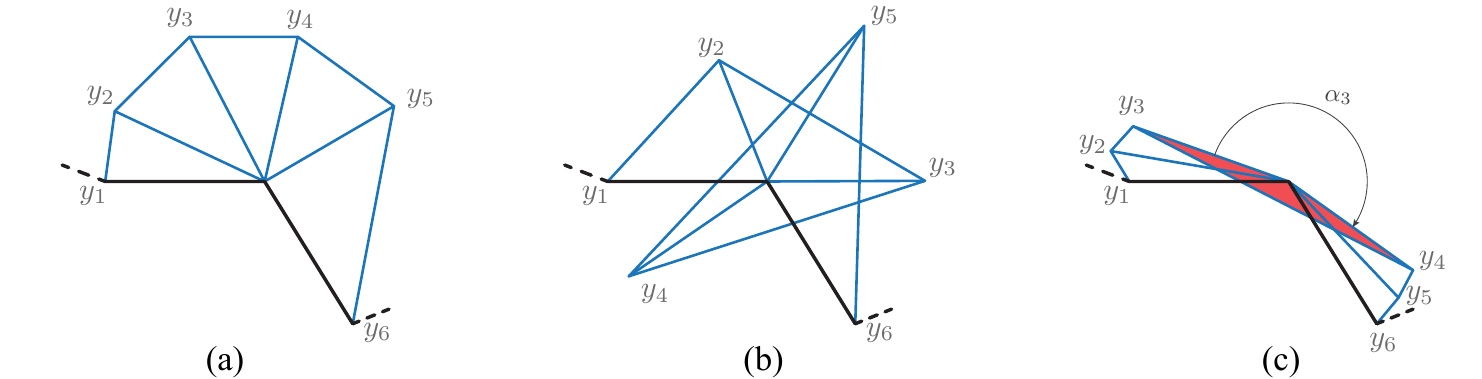}
  \caption{The angle conditions, $0<\alpha_j<\pi$ and $\sum \alpha_j > \pi$, are satisfied in (a) and (b). (b) depicts winding around the boundary vertex, where $\sum \alpha_j = \beta + 2\pi$. (c) shows an infeasible configuration: angles between consecutive edges cannot exceed $\pi$, otherwise a boundary triangle (shown in red) reverses its orientation, in contradiction to the assumption that $\det D\phi_{\parr{\x,\TT}\to\parr{\y,\TT}}>0$ on boundary triangles.
  }
  \label{fig:det_proof_angle_condition}
\end{figure}

By way of contradiction suppose that $\brac{\NCone(\y,\TT)}(v_i) \neq \Real^2$, that is, there exists a non-trivial $z\in \Real^2$ such that $z\notin \textrm{cl}\parr{\brac{\NCone(\y,\TT)}(v_i)}$. Namely, for such $z$ the linear system $\sum \alpha_j Y_j = z$ cannot be satisfied with $\alpha_j\geq0$. In turn, Farkas' Lemma \cite{rockafellar1970convex} implies that there exists $c\in\Real^2$ such that $c^T z < 0$ and $c^T Y_j \geq 0$ for all $j=1,\ldots,K$. This, however, is impossible as the angles between $Y_j$ satisfy $0<\alpha_j<\pi$ and $\sum_{j=1}^{K-1} \alpha_j > \pi$, which means that when $j$ goes from $1$ to $K$, $c^TY_j$ must change sign for some $j$.
\end{proof}

Now, suppose that $v_k$ is a reflex boundary vertex were the cone condition does not hold. Lemma~\ref{lemma:boudnary_positive_det} implies that $\brac{\NCone(\y,\TT)}(v_k) = \Real^2$. Consequently, we have the freedom to choose weights $w_{kj}>0$ for the directed edges incident to $v_k$ such that 
$$\sum_{v_j \in \NN(v_k)} w_{kj} \parr{y_j - y_k} = \brac{\Lap_{w}(\y,\TT)}(v_k)\in \CC_{\PP}(y_k).$$ 
Note that we are free to do this since $v_k$ is fixed on the boundary and the weights are not required to be symmetric; consequently, such reassignment of $w_{kj}>0$ does not change the map $\phi_{\parr{\x,\TT}\to\parr{\y,\TT}}$. Repeating the same argument for all reflex boundary vertices ensures that the conditions of Theorem~\ref{thm:nonConvexTuttePWL} are satisfied and, consequently, that $\phi_{\parr{\x,\TT}\to\parr{\y,\TT}}$ is a homeomorphism of $\Omega_{\parr{\x,\TT}}$ onto $\PP$.


\end{document}